\newcommand{\cO}{\mathcal{O}}
\newcommand{\cD}{\mathcal{D}}
\newcommand{\cE}{\mathcal{E}}
\newcommand{\cF}{\mathcal{F}}
\newcommand{\cL}{\mathcal{L}}
\newcommand{\cM}{\mathcal{M}}
\newcommand{\Z}{\mathbb{Z}}
\newcommand{\G}{\mathbb{G}}
\newcommand{\C}{\mathbb{C}}
\newcommand{\bbC}{\mathbb{C}}
\newcommand{\PP}{\mathbb P}
\renewcommand{\mod}{\mathrm{mod}}
\newcommand{\lcm}{\mathrm{l.c.m.}}
\DeclareMathOperator{\Spec}{\mathrm{Spec}}
\DeclareMathOperator{\Pic}{Pic}
\DeclareMathOperator{\FM}{FM}
\DeclareMathOperator{\Fr}{Fr}
\DeclareMathOperator{\ch}{ch}
\DeclareMathOperator{\PGL}{PGL}
\newcommand{\mc}{\mathcal}
\newcommand{\ord}{\ensuremath{\operatorname{ord}}}
\newcommand{\Span}[1]{\left<#1\right>}
\newcommand{\End}{\operatorname{\mathrm{End}}}
\newcommand{\Ker}{\operatorname{\mathrm{Ker}}}
\renewcommand{\Im}{\operatorname{\mathrm{Im}}}
\newcommand{\Aut}{\operatorname{\mathrm{Aut}}}
\newcommand{\Alb}{\operatorname{\mathrm{Alb}}}
\newcommand{\Br}{\operatorname{\mathrm{Br}}}
\newcommand{\id}{\mathrm{id}}
\newcommand{\rid}{\mathrm{id}}
\newcommand{\Ext}{\mathop{\mathrm{Ext}}\nolimits}
\newcommand{\RHom}{\mathop{\mathbb R\mathrm{Hom}}\nolimits}
\newcommand{\mcRHom}{\mathop{\mathbb R\mathcal{H}om}\nolimits}
\newcommand{\Lotimes}{\stackrel{\mathbb L}{\otimes}}
\mathchardef\mhyphen="2D
\theoremstyle{plain}
\newtheorem{theorem}{Theorem}[section]
\newtheorem{lemma}[theorem]{Lemma}
\newtheorem{proposition}[theorem]{Proposition}
\newtheorem{corollary}[theorem]{Corollary}
\theoremstyle{definition}
\newtheorem{definition}[theorem]{Definition}
\newtheorem{remark}[theorem]{Remark}
\newtheorem{claim}[theorem]{Claim}
\title{Fourier--Mukai partners of elliptic ruled surfaces over arbitrary characteristic fields}
\author{Hokuto Uehara and Tomonobu Watanabe}
\date{}
\begin{document}

\maketitle
\renewcommand{\thefootnote}{}
\footnote[0]{2020 Mathematics subject classification: 14F08, 14J27, 18G80. }
\renewcommand{\thefootnote}{\arabic{footnote}}
\begin{abstract}
The first author explicitly describes the set of Fourier--Mukai partners of elliptic ruled surfaces over the complex number field in \cite{Ue17}. In this article, we generalize it over arbitrary characteristic fields. We also obtain a partial evidence of the Popa--Schnell conjecture in the proof.
\end{abstract}

\section{Introduction}
 Let us consider the derived category of coherent sheaves $D^b(X)$ for a smooth projective variety $X$ over an algebraically closed field $k$ of $p:=\ch k\ge 0$. We call a smooth projective variety $Y$ a \emph{Fourier--Mukai partner} of $X$ if there exists an equivalence $D^b(X) \cong D^b(Y)$ as $k$-linear triangulated categories. We let $\FM(X)$ denote the set of isomorphism classes of Fourier--Mukai partners of $X$. It is a fundamental question to describe the set $\FM(X)$ explicitly. 
It is known that $|\FM (C)|= 1$ for any smooth projective curves $C$ (see \cite[Corollary 5.46]{Hu06}). 
On the other hand, smooth projective surfaces $S$ may have non-trivial Fourier--Mukai partners: Namely, $|\FM (S)|\ne 1$ may occur. 
Bridgeland, Maciocia and Kawamata show in \cite{BM01} and \cite{Ka02} that 
if a smooth projective surface $S$ over $\C$ has a non-trivial Fourier--Mukai partner $T$, 
then both are abelian surfaces, K3 surfaces or elliptic surfaces with nonzero Kodaira dimension. There exist several known examples of surfaces $S$ with $|\FM (S)|\ne 1$ 
(\cite{Mu01, Og02, Ue04}). 

In this article, we study the set $\FM(S)$ of elliptic ruled surfaces $S$ defined over $k$.
Here, an elliptic ruled surface means a smooth projective surface with a 
$\PP^1$-bundle structure over an elliptic curve. 
We obtain the following theorem, which is a generalization of the result for $k=\bbC$ in \cite{Ue17} to an arbitrary algebraically closed field $k$. 

\begin{theorem}\label{thm:main}
Let $S$ be an elliptic ruled surface defined over $k$ and $\pi \colon S \to E$ be 
a $\PP^1$-bundle over an elliptic curve $E$. If $|\FM (S)|\ne 1$, then $S$ is of the form
\[
S = \PP (\mc O_E\oplus \mc L)
\]
for some $\mc L \in \Pic^0{E}$ of order $m \ge 5$. Furthermore we have
\begin{equation*}\label{eqn:FM=P}
\FM (S)=\{ \PP ( \mc O_{E}\oplus \mc L^i)   \mid   i\in \Z \mbox{ with } (i,m)=1 \mbox{ and }1\le i<m \}/\cong,
\end{equation*}
and $$\lvert\FM (S)\rvert
=\varphi(m)/\lvert H^{\cL}_{\hat{E}} \rvert.$$ Here, $\varphi$ is the Euler function, and
we define 
\begin{equation}\label{eqn:def_H00}
H^{\cL}_{\hat{E}} := \{ i \in (\Z/m\Z)^* \, | \, \exists \phi \in \Aut_0(E) \, \text{such that} \, \phi^*\cL \cong \cL^{i}\} 
\end{equation}
as a subgroup of  $(\Z/m\Z)^*$. We also have $\lvert H^{\cL}_{\hat{E}} \rvert = 2, 4$ or $6$, depending on the choice of $E$ and $\cL$. 
\end{theorem}

In the case $k=\bbC$, it is known (cf.~\cite[Equation (3.4)]{Ue17}) that $S= \PP (\mc O_E\oplus \mc L)$ is a quotient of $F_0 \times \PP^1$  by a cyclic group action, where $F_0$ is an elliptic curve, and the first author uses this fact to describe the set $\FM(S)$ in \cite{Ue17}. 
On the other hand, in the case $p:= \ch k>0$, 
elliptic ruled surfaces $S=\PP( \mc O_{E}\oplus \mc L)$ with $p \mid m$ 
do not admit a similar construction (see \cite[\S 5.1]{TU22}).
Therefore, we need more general treatment to show Theorem  \ref{thm:main}.

In the proof of Theorem \ref{thm:main}, we obtain some evidence of the Popa--Schnell conjecture in \cite{PS11}, which states that for any Fourier--Mukai partners $X'$ of a given 
smooth projective variety $X$, there exists an equivalence 
$D^b(\Alb (X'))\cong D^b(\Alb (X))$ of derived categories of their albanese varieties. 


\begin{proposition}[=Corollary \ref{corollary:base-elliptic}]\label{prop:main2}
Let $X \to A$ and $X' \to A'$ be $\mathbb{P}^n$-bundles over abelian varieties $A$ and $A'$ for $n=1,2$. If $X$ and $X'$ are Fourier--Mukai partners, then so are $A$ and $A'$. Furthermore, the Popa--Schnell conjecture holds true in this case.
\end{proposition}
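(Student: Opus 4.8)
The plan is to prove Proposition~\ref{prop:main2} by reducing the statement about the $\PP^n$-bundles $X, X'$ to a statement about their base abelian varieties $A, A'$, and then invoking the known classification of Fourier--Mukai partners of abelian varieties. The key observation is that the albanese variety of a $\PP^n$-bundle $X \to A$ over an abelian variety $A$ is $A$ itself: since $\PP^n$ is rational and simply connected (and has no global one-forms), the projection $\pi : X \to A$ induces an isomorphism $\Alb(X) \xrightarrow{\sim} A$, with the albanese map essentially being $\pi$ followed by the identification $A = \Alb(A)$. The first step, therefore, is to establish $\Alb(X) \cong A$ and $\Alb(X') \cong A'$ so that the two assertions of the proposition---that $A, A'$ are Fourier--Mukai partners, and that the Popa--Schnell conjecture holds---become literally the same statement.

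The heart of the argument is then the following: given a $k$-linear equivalence $D^b(X) \cong D^b(X')$, I want to produce an equivalence $D^b(A) \cong D^b(A')$. First I would recall that a Fourier--Mukai equivalence preserves the canonical line bundle up to the action of automorphisms and shifts, and more to the point preserves numerical/cohomological invariants. For a $\PP^n$-bundle, the relative canonical bundle is anti-ample along the fibers, so the fiberwise structure is detected by the geometry of $\omega_X$. The cleanest route is to show that the Albanese morphism is canonically recovered from $D^b(X)$: since $\Alb(X)$ and the Albanese map can be reconstructed from the derived category (the albanese variety is a derived invariant for many classes of varieties, and here it coincides with $A$), an equivalence $D^b(X)\cong D^b(X')$ forces a relationship between $A$ and $A'$. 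I expect the precise mechanism to be that the equivalence descends to, or is compatible with, a derived equivalence of the bases, using that the bundles are built over abelian varieties whose derived autoequivalences are well understood (Orlov's description via the symplectic-type group acting on $A \times \hat A$).

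The main obstacle, and the step I would spend the most care on, is controlling how the Fourier--Mukai kernel interacts with the bundle projections $\pi$ and $\pi'$. It is \emph{not} automatic that an arbitrary equivalence $D^b(X) \cong D^b(X')$ is compatible with the morphisms to $A$ and $A'$; one must show that the induced action on cohomology (or on the relevant lattice of numerical classes) respects the filtration coming from the bundle structure, so that the fiber classes are matched up and the equivalence can be ``pushed down.'' Concretely, I would analyze the action of the equivalence on $H^*$ (or on the Chow/numerical motive), identify the class of a fiber $\PP^n$ as a distinguished element preserved up to the discrete symmetries, and deduce that $A$ and $A'$ have isomorphic Hodge-theoretic (or $\ell$-adic, in positive characteristic) data of the kind that, for abelian varieties, is equivalent to being Fourier--Mukai partners by the Orlov--Polishchuk criterion. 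In characteristic $p$ the Hodge-theoretic input must be replaced by its crystalline or $\ell$-adic analogue, which is exactly the kind of characteristic-free reformulation the rest of the paper develops; I would lean on those tools here.

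Finally, once $D^b(A) \cong D^b(A')$ is in hand, the Popa--Schnell conjecture for this case follows immediately: by the first step $\Alb(X) = A$ and $\Alb(X') = A'$, so the required equivalence $D^b(\Alb(X')) \cong D^b(\Alb(X))$ is precisely the equivalence $D^b(A') \cong D^b(A)$ just produced. Thus the two conclusions of the proposition are proved by the same construction, and the $n=1,2$ restriction enters only through the control we have on the bundle geometry and on the derived invariants of low-dimensional abelian base varieties.
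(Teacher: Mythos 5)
You correctly identify the crux --- an arbitrary equivalence $D^b(X)\cong D^b(X')$ has no a priori compatibility with the projections $\pi$ and $\pi'$ --- but your proposal does not actually close this gap, and the route you sketch would not work. You plan to show that the induced action on cohomology ``preserves the class of a fiber $\PP^n$'' and then to feed Hodge-theoretic or $\ell$-adic data into an Orlov--Polishchuk-type criterion. There is no reason an equivalence preserves fiber classes: Fourier--Mukai kernels act highly nontrivially on cohomology (already for abelian varieties the Poincar\'e bundle swaps point classes and fundamental classes), and nothing in your sketch supplies the needed filtration-preservation. Moreover, cohomological data does not by itself produce an equivalence $D^b(A)\cong D^b(A')$, least of all in characteristic $p$ where the Hodge-theoretic input you invoke is unavailable; and your appeal to ``the Albanese variety is reconstructed from the derived category'' is circular, since that reconstruction (for these varieties) is exactly the Popa--Schnell statement being proved. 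In short, every hard step in your outline is deferred with ``I expect'' or ``I would analyze,'' and the one concrete mechanism proposed (matched fiber classes in $H^*$) is unjustified.

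The paper's proof is purely categorical and sidesteps cohomology entirely. The decisive input is Pirozhkov's theorem that abelian varieties are NSSI (\cite[Theorem 1.4]{Pi20b}): \emph{every} admissible subcategory of $D^b(X)$ is automatically $A$-linear. This is precisely what restores compatibility with the projection for an arbitrary equivalence. Combined with the base-change injectivity of Theorem \ref{theorem:fibrations} and the classification of admissible subcategories of $\PP^1$ and $\PP^2$ (the latter from \cite{Pi20a}), Proposition \ref{prop:adm_on_proj} shows every indecomposable admissible subcategory of $D^b(X)$ is equivalent to $D^b(A)$; transporting one such subcategory through the equivalence and applying the same classification on the $X'$ side yields $D^b(A)\cong D^b(A')$ formally. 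Note also that the restriction $n=1,2$ comes from the fiber --- admissible subcategories of $\PP^n$ are classified only for $n\le 2$ --- not, as you suggest, from any low-dimensionality of the abelian bases, which may have arbitrary dimension.
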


The plan of this article is as follows. In \S \ref{sec:relative_moduli}, we explain some results and  notation of relative moduli spaces of stable sheaves on elliptic fibrations, a main tool for the study of Fourier--Mukai partners of elliptic surfaces.
We obtain a characterization of Fourier--Mukai partners of elliptic surfaces with non-zero Kodaira dimensions in Theorem \ref{BMelliptic} for arbitrary $p=\ch k$, which was originally proved by Bridgeland in the case $p=0$.

In \S \ref{sec:elliptic}, we show several results on automorphisms of elliptic curves. 
 
In \S \ref{sec:Pirozhkov}, we first explain  Theorem  \ref{theorem:fibrations} by Pirozhkov, and then we apply it to show 
Proposition \ref{prop:main2}.

Finally, in \S \ref{sec:FM}, we first narrow down the candidates of elliptic ruled surfaces with non-trivial Fourier--Mukai partners by Proposition \ref{prop:main2} and the main result in \cite{TU22}, and then prove Theorem \ref{thm:main}.

This article is a part of the second author's doctoral thesis.
\paragraph{Notation and conventions}\label{subsec:notation_convention}
All algebraic varieties $X$ are defined over an algebraically closed field $k$
of characteristic $p\ge 0$. A point $x\in X$ means a closed point unless otherwise specified. 
 
For an elliptic curve $E$, $\Aut_0(E)$ is the group of automorphisms fixing the origin.
 
By an \emph{elliptic surface}, we will
always mean a smooth projective surface $S$ together with a smooth projective curve $C$ and a relatively minimal projective morphism $\pi\colon S\to C$ whose general fiber is an elliptic curve.
An \emph{elliptic ruled surface} means a smooth projective surface with a 
$\PP^1$-bundle structure over an elliptic curve.

For a morphism  $\pi\colon X\to Y$ between algebraic varieties, the symbol $\Aut (X/Y)$ stands for the group of automorphisms of $X$ preserving $\pi$.

\paragraph{Acknowledgements}
We would like to thank Professor Shinnosuke Okawa for invaluable suggestions. 
We also appreciate the referee's insightful comments. 
H.U. is supported by the Grants-in-Aid for Scientific Research (No.~18K03249, 23K03074). 

\section{Relative moduli spaces of sheaves on elliptic fibrations}\label{sec:relative_moduli}

\subsection{Fourier--Mukai partners of elliptic surfaces}\label{subsec:FMeliiptic_surface}
For a smooth projective variety $X$ defined over an algebraically closed field $k$ of characteristic $p\ge 0$, we denote by $D^b(X)$ the bounded derived categories of coherent sheaves on $X$. We call a smooth projective variety $Y$ a \emph{Fourier--Mukai partner} 
of $X$ if $D^b(X)$ is $k$-linear triangulated equivalent to $D^b(Y)$.
We denote by $\FM (X)$ the set of isomorphism classes of Fourier--Mukai partners 
of $X$.

We study the set $\FM(S)$ for elliptic surfaces $S$. 
Let $\pi\colon S\to C$ be an elliptic surface and denote  a general fiber of $\pi$ by $F_\pi$. We define
\begin{equation}\label{eqn:pi}
\lambda_{\pi}:=\min\{ D\cdot F_\pi \mid D\mbox{ is a horizontal effective divisor on }S \}.
\end{equation}
Fix a polarization on $S$ and consider the relative moduli scheme $\cM(S/C)\to C$ of stable purely $1$-dimensional sheaves\footnote{Here we consider the Gieseker stability, equivalently the slope stability for $1$-dimensional sheaves.  Moreover, the stability does not depend on the choice of polarizations for such sheaves.} 
on the fibers $\pi$, whose existence is assured by Simpson in the case $p=0$
 in \cite{Si94}, and by Langer
in the case of arbitrary $p$ in \cite{La04}. For integers $a>0$ and $i$ with $i$ coprime to $a\lambda_{\pi}$, let $J_S (a,i)$ be the union of those components of $\cM (S/C)$ which contains a point representing a rank $a$, degree $i$ vector bundle on a smooth fiber of $\pi$.
Bridgeland shows in \cite{Br98}  that $J_S (a,i)$ is actually a smooth projective surface and 
the natural morphism $J_S (a,i)\to C$ 
 is a minimal elliptic fibration.

Put $J^i(S):=J_S(1,i)$.  We can also define an elliptic surface $J^j(S)\to C$ for arbitrary $j\in \Z$, which is not necessarily fine but the coarse moduli space of a suitable functor  (see \cite[\S 11.4]{MR3586372}).
We have $J^0(S)\cong J(S)$, the Jacobian surface associated to $S$, 
$J^1(S)\cong S$ and 
\begin{equation}\label{eqn:group_structure}
J^i(J^j(S))\cong J^{ij}(S)
\end{equation}
for $i,j\in \Z$. See the argument after \eqref{eqn:ixi} for the proof of \eqref{eqn:group_structure}.

It is well-known that the following statement holds in the case $p=0$ by \cite[Theorem 1.2]{Br98}.
We state that it is also true for arbitrary $p$.

\begin{proposition}\label{prop:SJS}
Elliptic surfaces $S$ and $J^i(S)$ for some integer $i$ with $(i,\lambda _{\pi})=1$ 
are derived equivalent via an integral functor 
$\Phi ^{\mc P}:=\Phi ^{\mc P}_{J^i(S)\to S}$ for a universal sheaf $\mc P$ on $J^i(S)\times S$.
\end{proposition}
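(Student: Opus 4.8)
The plan is to follow Bridgeland's strategy in \cite{Br98}, isolating the three places where $\ch k=0$ enters and supplying characteristic-free substitutes. Since $(i,\lambda_{\pi})=1$, the component $J^i(S)=J_S(1,i)$ is a fine moduli space, so the first task is to construct the universal sheaf $\mc P$ on $J^i(S)\times S$ in arbitrary characteristic; here I would invoke Langer's construction of the relative moduli space of stable pure-dimension-one sheaves (already cited above for $\cM(S/C)$) together with the standard coprimality argument showing that the obstruction to the existence of a universal family vanishes when $(i,\lambda_{\pi})=1$. This produces the integral functor $\Phi^{\mc P}:=\Phi^{\mc P}_{J^i(S)\to S}$, and $J^i(S)\to C$ is a minimal elliptic fibration by the results quoted above.

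Next I would show that $\Phi^{\mc P}$ is fully faithful by means of the orthogonality criterion for integral functors, which I will verify holds over an arbitrary field: it suffices to check, for closed points $y,y'\in J^i(S)$, that $\Hom_S(\Phi^{\mc P}(k(y)),\Phi^{\mc P}(k(y'))[j])$ vanishes for all $j$ when $y\neq y'$, equals $k$ for $j=0$ when $y=y'$, and vanishes outside the range $0\le j\le 2$. By construction $\Phi^{\mc P}(k(y))$ is the stable sheaf $\mc P_y$ supported on the fiber of $\pi$ through $y$. If $y$ and $y'$ lie over distinct points of $C$ the supports are disjoint and every group vanishes; if they lie over a common fiber, $\mc P_y$ and $\mc P_{y'}$ are stable sheaves of equal reduced Hilbert polynomial, so $\Hom_S(\mc P_y,\mc P_{y'})=0$ for $y\neq y'$ and $=k$ for $y=y'$, which is a stability argument valid in any characteristic. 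The vanishing of $\Ext^2$ in the off-diagonal case is handled by relative Serre duality together with the fact that $\omega_S$ is numerically trivial on the fibers of the elliptic fibration, which prevents $\mc P_{y'}$ from being isomorphic to the twist $\mc P_y\otimes\omega_S$, while the range condition $0\le j\le 2$ is automatic since the $\mc P_y$ are sheaves on a surface.

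Finally I would upgrade full faithfulness to an equivalence. Because $\Phi^{\mc P}$ sends point-sheaves to sheaves supported on fibers and is assembled from a relative construction over $C$, it intertwines the Serre functors of $J^i(S)$ and $S$ up to the relative dualizing data; as both spaces are smooth projective surfaces, a fully faithful integral functor commuting with Serre functors is automatically an equivalence. Alternatively one argues by symmetry, identifying the left adjoint of $\Phi^{\mc P}$ with an integral functor of the same moduli type and running the orthogonality criterion once more. The main obstacle is the behaviour at the singular fibers of $\pi$ in positive characteristic: both the stability-based orthogonality and the base-change statements underlying the criterion must be established without recourse to generic smoothness or resolution of singularities, so the delicate point is to confirm that Langer's moduli space stays fine and that cohomology-and-base-change applies uniformly across the degenerate fibers when $p>0$.
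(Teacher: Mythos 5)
There is a genuine gap, and it sits exactly at the one point where characteristic $p$ actually bites. Your full-faithfulness step rests on the claim that it ``suffices to check,'' for closed points $y,y'\in J^i(S)$, the orthogonality conditions on $\Phi^{\mc P}(k(y))$ and $\Phi^{\mc P}(k(y'))$ — that is, on the Bondal--Orlov criterion that a strongly simple kernel yields a fully faithful integral functor. This criterion is \emph{false} in positive characteristic (\cite[Remark 1.25]{HLS07}, cited by the paper for precisely this reason). The standard counterexample is the structure sheaf of the graph of the Frobenius morphism $\Fr\colon X\to X$: it sends point sheaves to point sheaves at distinct points (Frobenius is a bijection on closed points), so every Hom-, $\Ext$-vanishing and range condition you list is satisfied, yet the functor is not fully faithful because $\Fr$ is purely inseparable and its differential vanishes. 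Your stability computations for $\Hom$ and $\Ext^2$ between the $\mc P_y$ are correct in any characteristic, but they establish only strong simplicity, which no longer implies full faithfulness; so the step ``orthogonality $\Rightarrow$ fully faithful'' is the broken link, not the singular-fiber base-change issues you flag at the end (those parts of Bridgeland's argument go through unchanged for $p>0$).

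The missing idea — which is essentially the entire new content of the paper's proof relative to \cite{Br98} — is the extra tangent-level hypothesis that repairs the criterion: one must additionally require injectivity of the Kodaira--Spencer map $\Ext^1_{J^i(S)}(\mc O_x,\mc O_x)\to \Ext^1_S(\mc P_x,\mc P_x)$, which rules out Frobenius-type degeneration; with this added, the original proof of \cite{BO95} works in arbitrary characteristic (cf.\ Step 5 of the proof of \cite[Proposition 7.1]{Hu06}). In the situation at hand this map is an isomorphism because $\mc P$ is a universal family, so the criterion applies. Your final step, passing from fully faithful to an equivalence, is essentially Bridgeland's criterion \cite{Br99} (checking $\mc P_x\otimes\omega_S\cong\mc P_x$ via the Serre functor) and is fine in any characteristic, matching the paper. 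But without the Kodaira--Spencer condition the proposal proves nothing that would distinguish $\Phi^{\mc P}$ from the Frobenius kernel, so as written the argument fails.
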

\begin{proof}
To prove the statement for $p=0$, Bridgeland first 
applies the Bondal--Orlov's criterion \cite{BO95} (see also \cite[Proposition 7.1]{Hu06}) for the functor 
$\Phi ^{\mc P}$ to be fully faithful,
namely he checks the strongly simpleness of $\mc P$. Then it is easy to show  
$\Phi ^{\mc P}$ is an equivalent by checking the 
Bridgeland's criterion \cite{Br99} for $\Phi ^{\mc P}$ to be equivalent.
But the Bondal--Orlov's criterion is false in the case $p>0$ \cite[Remark 1.25]{HLS07}. 
Instead, if we put an extra assumption that 
the Kodaira--Spencer map $\Ext_{J^i(S)}^1(\mc O_x,\mc O_x)\to \Ext_S^1(\mc P_x,\mc P_x)$
is injective, we  see the proof of \cite{BO95} works, and so the criterion holds 
(see also \cite[Step 5 in the proof of Proposition 7.1]{Hu06}). 
Actually, the map is an isomorphism in our case because $\mc P$ is a universal family. 
This completes the proof.
\end{proof}

We have a nice characterization of 
Fourier--Mukai partners of elliptic surfaces with non-zero Kodaira dimensions.


\begin{theorem}\label{BMelliptic}
Let $\pi \colon S\to C$ be an elliptic surface 
and $T$ a smooth projective variety.
Assume that the Kodaira dimension $\kappa (S)$ is non-zero.
Then the following are equivalent.
\begin{enumerate}
\item 
$T$ is a Fourier--Mukai partner of $S$. 
\item 
$T$ is isomorphic to $J^i(S)$ for some integer $i$ with $(i,\lambda _{\pi})=1$. 
\end{enumerate}
\end{theorem}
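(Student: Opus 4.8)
The implication (ii)$\Rightarrow$(i) is immediate from Proposition \ref{prop:SJS}: for $(i,\lambda_\pi)=1$ the surface $J^i(S)$ is derived equivalent to $S$, so every $T\cong J^i(S)$ is a Fourier--Mukai partner of $S$. The real content is the converse (i)$\Rightarrow$(ii), and my plan is to follow the argument of Bridgeland--Maciocia for $p=0$, isolating and repairing the steps that rely on characteristic-zero input.

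I would begin by fixing a kernel: by Orlov's representability theorem, which holds over an arbitrary field, any $k$-linear equivalence $\Phi\colon D^b(T)\xrightarrow{\sim}D^b(S)$ is of Fourier--Mukai type, $\Phi\cong\Phi^{\mc Q}$ for some $\mc Q\in D^b(T\times S)$; since dimension is a derived invariant, $T$ is a surface. The Serre functor $S_T=(-)\otimes\omega_T[\dim T]$ is intrinsic to the triangulated category, so $\Phi$ commutes with it and thereby relates the canonical classes $\omega_T$ and $\omega_S$. The crucial geometric step is to recover an elliptic fibration on $T$. Following Bridgeland--Maciocia I would work on the numerical Grothendieck group $K_{\mathrm{num}}$, on which $\Phi$ acts as an isometry for the Euler pairing, and show that the class of a general fibre $F_\pi$ of $\pi$ is matched with a primitive isotropic class on $T$ defining a minimal elliptic fibration $g\colon T\to C'$. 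It is exactly the hypothesis $\kappa(S)\neq 0$ that rigidifies the fibre ray: when $\kappa(S)=1$ it is pinned down by $c_1(\omega_S)$, while the ruled case $\kappa(S)=-\infty$ (the one relevant to elliptic ruled surfaces) requires a separate analysis of the isotropic classes compatible with the transported canonical class. Matching bases should then give $C'\cong C$.

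Next I would realise $T$ as a relative moduli space. After twisting $\Phi$ by a line bundle on $T$ and composing with a fibrewise auto-equivalence if needed, one arranges that the skyscraper sheaves $\mc O_t$, $t\in T$, are carried to shifts of stable sheaves supported on the fibres of $\pi$, of a fixed rank $a>0$ and degree $i$ with $i$ coprime to $a\lambda_\pi$. This identifies $T$ with a fine component $J_S(a,i)$ of the relative moduli scheme $\cM(S/C)$, which by Bridgeland's theorem recalled in \S\ref{sec:relative_moduli}---available for all $p$ through Langer's construction---is a smooth projective minimal elliptic surface over $C$. Finally, since a relative moduli space of higher-rank stable sheaves on an elliptic fibration is again a relative Jacobian (Atiyah's classification of bundles on an elliptic curve, applied fibrewise), the group law \eqref{eqn:group_structure} lets me rewrite $J_S(a,i)$ as $J^j(S)$ for some $j$ with $(j,\lambda_\pi)=1$, giving $T\cong J^j(S)$.

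The main obstacle is the recovery step in positive characteristic. The original argument uses Hodge theory and the Bondal--Orlov reconstruction, the latter being false for $p>0$ as already noted after Proposition \ref{prop:SJS}. I would therefore replace every cohomological input by the Euler pairing on $K_{\mathrm{num}}$ and by the intrinsic Serre functor, both of which survive in arbitrary characteristic, and use Langer's moduli spaces throughout. The most delicate points are, first, ruling out quasi-elliptic degenerations of $g$ in small characteristic (so that $T$ is genuinely an elliptic surface in the sense fixed in the conventions), and second---the true linchpin---showing that $\Phi(\mc O_t)$ is genuinely a stable \emph{sheaf} on a single fibre for generic $t$, a WIT-type statement whose usual proof invokes characteristic-zero vanishing and semicontinuity. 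Here I expect the injectivity of the Kodaira--Spencer map that was exploited in the proof of Proposition \ref{prop:SJS} to be the key technical device making the argument go through for all $p$.
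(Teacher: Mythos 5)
Your sketch reproduces, in outline, the same Bridgeland--Maciocia strategy the paper follows: (ii)$\Rightarrow$(i) via Proposition \ref{prop:SJS}, and for (i)$\Rightarrow$(ii) transport skyscrapers to stable sheaves on fibres, realize $T$ via the relative moduli space $\cM(S/C)$ (Langer's construction for $p>0$), and identify the relevant component with some $J^i(S)$. But there is a genuine gap exactly at the step that carries the positive-characteristic content. Your WIT-type statement can only give that $\Phi(\mc{O}_t)$ is a stable fibre sheaf for \emph{generic} $t$ (as you concede at the end, in tension with your earlier claim for all $t$); the universal property then produces not an identification of $T$ with the fine moduli component but merely a rational map $f\colon T\dashrightarrow J^i(S)$ on whose graph the kernel is supported near a point, as in \cite[Lemma 2.5]{BM01}. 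Over $\C$ one concludes that $f$ is birational and finishes by relative minimality over $C$; for $p>0$ there is a new possibility absent in characteristic zero: $f$ could be \emph{purely inseparable} (e.g.\ factor through a power of Frobenius), hence bijective on points but of degree $p^e>1$, in which case $T$ need not be isomorphic to $J^i(S)$. The paper's proof of Theorem \ref{BMelliptic} names and excludes precisely this case, using that $\Phi^{\mc U}$ is an equivalence: an equivalence induces isomorphisms on $\Ext$-groups of skyscrapers, which fails for a purely inseparable $f$ because its differential degenerates along the inseparable directions. Your proposal never confronts inseparability, and this is the one genuinely new ingredient of the characteristic-$p$ argument for (i)$\Rightarrow$(ii).

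Relatedly, you misattribute the Kodaira--Spencer device. Injectivity of the Kodaira--Spencer map is how the paper repairs the Bondal--Orlov fully-faithfulness criterion, which is false for $p>0$; it enters the proof of Proposition \ref{prop:SJS}, i.e.\ the direction (ii)$\Rightarrow$(i), where an equivalence must be \emph{constructed} from the universal sheaf. In (i)$\Rightarrow$(ii) you are handed an equivalence, no fully-faithfulness criterion is needed, and Kodaira--Spencer injectivity helps neither with your WIT step nor with the inseparability problem above. Two smaller inaccuracies: the quasi-elliptic worry is vacuous on this route, since the fibration on $T$ arises from the relative moduli space over $C$, whose generic fibre is a torsor under the Jacobian of the smooth generic fibre of $\pi$ and hence a smooth genus-one curve; and the rewriting $J_S(a,i)\cong J^j(S)$ is not an application of the group law \eqref{eqn:group_structure} for the surfaces $J^j(S)$, but a separate identification (via the generic fibre and fibrewise classification of stable bundles) that belongs to the argument of \cite{Br98} and \cite{BM01}.
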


\begin{proof}
It follows from Proposition \ref{prop:SJS} that  (ii) implies (i).
The opposite direction was proved in \cite[Proposition 4.4]{BM01} 
when $p=0$ and $S$ has no $(-1)$-curves.
The most of the proof there works even for $p>0$.
So we give only a sketch of the proof. 

As the proof in \cite[Proposition 4.4]{BM01}, we can show
that there exists an equivalent functor $\Phi^{\mc U}\colon D^b(T)\to D^b(J^i(S))$
for some integer $i$ with $(i,\lambda _{\pi})=1$
such that $\Phi^{\mc U}(\mc O_t)=\mc O_y$ for some $t\in T,y\in J^i(S)$. 
Then as in \cite[Lemma 2.5]{BM01},
we see that there exists a rational map $f\colon T\dashrightarrow J^i(S)$ such that 
the kernel $\mc U$ is supported on the graph of $f$ near the point $(t,y)$. 
Because $\Phi^{\mc U}$ is an equivalence, 
we can avoid the possibility that $f$ is inseparable, and hence
$f$ is a birational map. 
Then the proof of  \cite[Proposition 4.4]{BM01} works in the rest (including the case
that $S$ is not minimal). 
\end{proof}

As a consequence of Theorem \ref{BMelliptic},
we obtain
$$
\FM(S) = \{J^i(S) \mid i \in \Z, \ (i,\lambda_{\pi})=1 \} / \cong. 
$$
Moreover we  see that there exist natural isomorphisms 
\begin{equation}\label{eqn:JiJ-i}
J^i(S)\cong J^{i+\lambda_{\pi}}(S)\cong J^{-i}(S).
\end{equation}
Hence, in order to count the cardinality of the set 
$
\FM(S),
$
we often regard an integer $i$ as an element of the unit group $(\Z/\lambda_{\pi}\Z)^*$.
\if0
Suppose that we are given an isomorphism 
$$\varphi\colon S_1\to S_2$$ 
between elliptic surfaces $\pi_l\colon S_l\to C_l$ 
$(l=1,2)$.
Take a universal family $\mc{U}_l$ on $S_l\times_{C_l}
 J^i(S_l)$ for $i$ with $(i,\lambda_{\pi_l})=1$.
Then there exists a unique isomorphism 
$$J^i(\varphi)\colon J^i (S_1)\to J^i(S_2)$$ such that 
$$(\id_{S_1}\times (J^i(\varphi))^{-1})^*\mc{U}_1\cong (\varphi\times \id_{J^i(S_2)})^*\mc{U}_2\otimes p_2^*\mc{L}$$ 
for some $\mc{L}\in \Pic J^i(S_2)$.
Thus,

For an elliptic surface $\pi\colon S\to C$
\fi
It follows from 
the isomorphisms \eqref{eqn:group_structure} and \eqref{eqn:JiJ-i} that the set
\begin{equation}\label{eqn:Hpi}
H_{\pi}:=\{ i\in (\Z/\lambda_{\pi}\Z)^* \mid J^i(S)\cong S\}
\end{equation}
forms a subgroup of $(\Z/\lambda_{\pi}\Z)^*$. 
Moreover, we see from \eqref{eqn:group_structure} that 
$J^i(S)\cong J^j(S)$ for $i,j\in (\Z/\lambda_{\pi}\Z)^*$ if and only if 
$(S\cong) J^1(S)\cong J^{i^{-1}j}(S)$. Combining all together,  we have the following.
\begin{lemma}\label{lem:FM_group}
For an elliptic surface $\pi\colon S\to C$ with 
$\kappa(S)\ne 0$,
the set $\FM(S)$ is naturally identified with the group
$(\Z/\lambda_{\pi}\Z)^*/H_\pi$.
\end{lemma}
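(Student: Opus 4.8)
The plan is to make the identification concrete through the assignment $i \mapsto J^i(S)$, assembling the structural facts already recorded above. First I would invoke Theorem~\ref{BMelliptic}, together with its displayed consequence, to the effect that every Fourier--Mukai partner of $S$ is isomorphic to $J^i(S)$ for some integer $i$ with $(i,\lambda_{\pi})=1$. Because of the natural isomorphisms $J^i(S)\cong J^{i+\lambda_{\pi}}(S)\cong J^{-i}(S)$, the isomorphism class of $J^i(S)$ depends only on the residue of $i$ in $(\Z/\lambda_{\pi}\Z)^{*}$, so the rule $i\mapsto [J^i(S)]$ descends to a well-defined surjection $q\colon (\Z/\lambda_{\pi}\Z)^{*}\twoheadrightarrow \FM(S)$.

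Next I would compute the fibers of $q$. For $i,j\in(\Z/\lambda_{\pi}\Z)^{*}$ we have $q(i)=q(j)$ exactly when $J^i(S)\cong J^j(S)$; by the criterion already derived from the group law \eqref{eqn:group_structure}, this holds if and only if $J^1(S)\cong J^{i^{-1}j}(S)$, that is, if and only if $i^{-1}j\in H_{\pi}$, i.e.\ $iH_{\pi}=jH_{\pi}$. Since $H_{\pi}$ is a subgroup of $(\Z/\lambda_{\pi}\Z)^{*}$, the map $q$ therefore factors through a bijection $(\Z/\lambda_{\pi}\Z)^{*}/H_{\pi}\xrightarrow{\ \sim\ }\FM(S)$. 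Transporting the quotient group structure along this bijection yields the asserted natural identification.

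Granting the two facts stated just before the lemma---that $H_{\pi}$ is a subgroup and that $J^i(S)\cong J^j(S)\iff J^1(S)\cong J^{i^{-1}j}(S)$---the argument is purely formal group-theoretic bookkeeping. The substantive content sits one level down, in the functoriality of the construction $J^i(-)$ under isomorphisms of elliptic fibrations: an isomorphism $S\cong J^j(S)$ must induce $J^i(S)\cong J^i(J^j(S))\cong J^{ij}(S)$, compatibly with \eqref{eqn:group_structure}, and it is this that both makes $H_{\pi}$ closed under products and inverses and drives the displayed isomorphism criterion. I expect this functoriality---sketched in the relative-moduli setup above---to be the only delicate point; once it is in hand, no further geometry is needed.
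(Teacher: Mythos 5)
Your proposal is correct and takes essentially the same route as the paper: the paper's proof of Lemma~\ref{lem:FM_group} consists precisely of combining Theorem~\ref{BMelliptic} (giving $\FM(S)=\{J^i(S)\mid (i,\lambda_\pi)=1\}/\cong$), the natural isomorphisms $J^i(S)\cong J^{i+\lambda_\pi}(S)\cong J^{-i}(S)$, the subgroup property of $H_\pi$, and the criterion $J^i(S)\cong J^j(S)\iff J^1(S)\cong J^{i^{-1}j}(S)$ derived from \eqref{eqn:group_structure}, exactly as you do. Your closing remark correctly isolates the one substantive ingredient---functoriality of $J^i(-)$ under isomorphisms of elliptic fibrations---which is also what the paper's argument implicitly rests on.
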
 
Since  $H_\pi$ contains the subgroup $\{\pm 1\}$ if $\lambda_\pi\ge 3$, 
we see
\begin{equation}\label{eqn:FMeqn}
|\FM (S)|\le \varphi (\lambda_{\pi})/2,
\end{equation}
where $\varphi$ is the Euler function.


\begin{lemma}\label{lem:SJS}
Let $\pi \colon S\to C$ be an elliptic surface. Then we have the following.
 \begin{enumerate}
 \item
 For $i\in \Z$ with $(i,\lambda_\pi)=1$, consider the elliptic fibration $\pi_i\colon J^i(S)\to C$.
 The multiplicities of the fibers $F_x$ and $F'_x$ of $\pi$ and $\pi_i$ over a fixed point $x\in C$ coincide. Furthermore, if the fiber $F_x$ is smooth, then it is isomorphic to $F'_x$.
\item
Let $S$ be an elliptic ruled surface, and take $S'\in \FM (S)$. Then $S'$ is also an elliptic ruled surface with an
elliptic fibration.
\end{enumerate}
\end{lemma}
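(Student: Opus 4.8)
The plan is to establish (i) first and then derive (ii) from it together with Theorem~\ref{BMelliptic}. The assertion in (i) about smooth fibres is immediate from the modular description: if $F_x$ is a smooth elliptic curve, then the fibre of $J^i(S)=J_S(1,i)$ over $x$ is the moduli space of stable rank-one degree-$i$ sheaves on $F_x$, namely $\Pic^i(F_x)$. This is a torsor under $\Pic^0(F_x)$ and is therefore abstractly isomorphic to the elliptic curve $F_x$, giving $F'_x\cong F_x$.

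For the multiplicities I would argue locally on $C$. The point is that the multiplicity $m_x$ of $F_x$ equals the least degree of a multisection of $\pi$ defined over the henselisation $\mathcal{O}_{C,x}^h$: near $x$ the smooth fibres admit local sections, while a local multisection $\Sigma$ through a multiple fibre of type ${}_{m_x}I_0$ meets the reduced fibre with total multiplicity $m_x$, so that $m_x\mid(\Sigma\cdot\Phi)$ with equality attained. Now the equivalence $\Phi^{\mathcal{P}}$ of Proposition~\ref{prop:SJS} is $C$-linear, hence restricts over each $\mathcal{O}_{C,x}^h$; on the rank--degree lattice $\Z^2$ it acts through the element of $\mathrm{GL}_2(\Z)$ determined by the fibre invariants $(1,i)$ of the kernel $\mathcal{P}$, and because $(i,\lambda_\pi)=1$ this lattice automorphism preserves the greatest common divisor of fibre degrees locally at $x$. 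Consequently the local minimal multisection degree is unchanged, so the multiplicities of $F_x$ and $F'_x$ agree. Alternatively, one may simply invoke Bridgeland's explicit local description of $J_S(1,i)\to C$ in~\cite{Br98}.

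To prove (ii) I would combine (i) with the classification of surfaces. Since $\kappa(S)=-\infty\neq0$, Theorem~\ref{BMelliptic} yields $S'\cong J^i(S)$ for some $i$ with $(i,\lambda_\pi)=1$, so $\pi'\colon S'\to C$ is a minimal elliptic fibration; in particular $S'$ carries an elliptic fibration. The elliptic fibration of the elliptic ruled surface $S$ over $C=\PP^1$ has no singular fibres and exactly two multiple fibres, each of type ${}_mI_0$. By (i) the multiplicities coincide and every smooth fibre of $\pi$ corresponds to an isomorphic smooth fibre of $\pi'$; hence $\pi'$ has no singular fibres either, and its two multiplicity-$m$ fibres are again of type ${}_mI_0$. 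Therefore $\chi(\mathcal{O}_{S'})=0$, and the canonical bundle formula gives $K_{S'}\equiv-\tfrac{2}{m}\,\Phi'$ for the fibre class $\Phi'$, so $\kappa(S')=-\infty$. Moreover $S'$ has no $(-1)$-curve: such a curve $\Gamma$ is not contained in a fibre, so $\Gamma\cdot K_{S'}=-1$ would force $\Gamma\cdot\Phi'=m/2$, contradicting the fact that the multiple fibre of multiplicity $m$ makes every fibre degree divisible by $m$. Thus $S'$ is a minimal surface with $\kappa=-\infty$ and $\chi(\mathcal{O}_{S'})=0$; by the Enriques--Kodaira--Bombieri--Mumford classification it is either $\PP^2$ or geometrically ruled, and since $\chi(\mathcal{O}_{\PP^2})=1$ it must be ruled over a curve $B$ with $1-g(B)=\chi(\mathcal{O}_{S'})=0$, i.e.\ over an elliptic curve. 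Hence $S'$ is an elliptic ruled surface with elliptic fibration $\pi'$.

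The main obstacle is the multiplicity statement in (i). Making the identification of $m_x$ with a local multisection degree precise across the non-reduced multiple fibres, and verifying the $\mathrm{GL}_2(\Z)$-action of $\Phi^{\mathcal{P}}$ on the rank--degree lattice, is the delicate step; the cleanest route may be to cite Bridgeland's local analysis directly, into which the invariance of multiplicities is already built. Once (i) is available, part (ii) is a formal consequence of the classification, the only point needing care being the exclusion of $(-1)$-curves, carried out above via divisibility of fibre degrees by $m$.
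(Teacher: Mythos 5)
Part (i) of your proposal is essentially the paper's argument: the smooth-fibre claim via the fine moduli interpretation ($F'_x$ parametrizes degree-$i$ line bundles on $F_x$, a torsor under $F_x$ with a $k$-point since $k=\bar{k}$) is exactly what the paper says, and for the multiplicities the paper likewise just cites the local analysis of \cite[Lemma 4.3]{BM01}. Your gcd sketch is in the right spirit but, as written, the step ``this lattice automorphism preserves the greatest common divisor of fibre degrees locally'' is not a proof: a general element of $\mathrm{GL}_2(\Z)$ does not preserve gcd's of degrees, and one must use both directions of the $C$-linear equivalence to extract the two divisibilities $m_x \mid i\,m'_x$ and $m'_x \mid i'\,m_x$, together with $(i,m_x)=1$ (which follows from $m_x \mid \lambda_\pi$). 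Since you flag this and offer to cite Bridgeland's local description, I regard (i) as acceptable.

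The genuine gap is in (ii). You assert that the elliptic fibration of an elliptic ruled surface $S$ over $\PP^1$ ``has no singular fibres and exactly two multiple fibres, each of type ${}_m I_0$.'' By Theorem \ref{TUmain} this describes only case (i-2); the lemma is stated for an arbitrary elliptic ruled surface carrying an elliptic fibration, and the remaining cases break your argument: (i-1) has no multiple fibres, (ii-1) has three tame multiple fibres of multiplicity $2$, and in characteristic $p>0$ the cases (i-5), (ii-2), (ii-3) have wild fibres. This is fatal at three points. First, your formula $K_{S'}\equiv -\tfrac{2}{m}\Phi'$ and hence your derivation of $\kappa(S')=-\infty$ is special to (i-2); in the wild cases the Bombieri--Mumford canonical bundle formula involves coefficients $a_i$ (e.g.\ $a=p-2$, $m=p$ in case (i-5)) which are \emph{not} determined by the multiplicities, so part (i) does not transfer them from $\pi$ to $\pi'$. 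Second, your exclusion of $(-1)$-curves fails numerically in case (ii-1): there $K_{S'}\equiv-\tfrac{1}{2}\Phi'$ and $\lambda_{\pi}=2$, so $\Gamma\cdot K_{S'}=-1$ forces $\Gamma\cdot\Phi'=2$, which is perfectly consistent with the divisibility of fibre degrees. Third, even within (i-2), ``hence $\pi'$ has no singular fibres either'' does not follow from (i) at the two multiple points: equality of multiplicities says nothing about the \emph{reduction} of $F'_x$, which could a priori be of type ${}_mI_k$ with $k\ge 1$, so your claim $\chi(\mc{O}_{S'})=0$ needs a further argument. The paper sidesteps all of this: it invokes the derived invariance of the Kodaira dimension \cite[Corollary 4.4]{To06}, valid in all characteristics, to get $\kappa(S')=-\infty$ directly, concludes from the classification that $S'$ is a rational elliptic surface or an elliptic ruled surface, and then eliminates the rational case because a rational elliptic fibration has at most one multiple fibre and no wild fibres, contradicting the fibre data that (i) transfers from the tables of Theorem \ref{TUmain}. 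You should either adopt that route, or redo your computation case by case through all six configurations of Theorem \ref{TUmain}, including the wild ones.
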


\begin{proof}
(i) The first statement will be explained by  using Weil--Ch\^{a}telet group in \S \ref{subsec:WC}. See the argument around \eqref{eqn:ord_iximx}.
By the property of the relative moduli scheme, the fiber $F'_x$  is the fine moduli space of line bundles of degree $i$ on a smooth elliptic curve $F_x$. Consequently, the second statement follows. 

(ii) Theorem \ref{BMelliptic} implies that there exists an integer $i$ with $(i,\lambda_\pi)=1$ such that 
$J^i(S)\cong S'$, which implies that $S'$ has an elliptic fibration $\pi'$. The Kodaira dimension is derived invariant 
by \cite[Corollary 4.4]{To06}, hence $S'$ is a rational elliptic surface or an elliptic ruled surface. 
Then, \cite[Theorem B]{MR3750214} implies that $S'$ is also an elliptic ruled surface.
\end{proof}

\subsection{Weil--Ch\^{a}telet group}\label{subsec:WC}
In this subsection, we recall the definition of  the Weil--Ch\^{a}telet group. For more details, see \cite[Ch.X.3]{Si09} and \cite[Ch.11.5]{MR3586372}. 

Let $E_0$ be an elliptic curve over a field $K$.  
A homogeneous space for $E_0$ is a pair $(E, \mu)$, where $E$ is a smooth curve over $K$, and $\mu$ is a simply transitive algebraic group action
\[
\mu \colon E \times E_0 \to E. 
\]

We say that two homogeneous spaces $(E, \mu)$ and $(E', \mu')$ are \emph{equivalent} if there exists an isomorphism $\theta \colon E \to E'$ defined over $K$ which is compatible with the action of $E_0$. The collection $WC(E_0)$ of equivalence classes of homogeneous spaces for $E_0$ 
 has a natural group structure (cf.~\cite[Theorem X.3.6]{Si09}, \cite[Proposition 11.5.1]{MR3586372}),
 and it is called the \emph{Weil--Ch\^{a}telet group}. 

Let $\pi\colon S\to C$ be an elliptic surface (over an algebraically closed field $k$). 
We denote the generic fiber of $\pi_i\colon J^i(S)\to C$ by $J^i_{\eta}$ for $i\in \Z$. 
Then $J^0_{\eta}$ is an elliptic curve over the function field of $C$, and  we have a natural homogeneous space structure
$$
\mu_i \colon J^i_{\eta} \times J^0_{\eta} \to J^i_{\eta}\quad  (\mc{L},\mc{M})\mapsto \mc{L}\otimes \mc{M},
$$  
and hence we can regard 
$
(J^i_{\eta},\mu_i) \in WC(J^0_{\eta}).
$
We define 
\begin{equation}\label{eqn:xi}
\xi:=(J^1_{\eta},\mu_1)\in WC(J^0_{\eta}),
\end{equation}
then, we have
\begin{equation}\label{eqn:ixi}
i\xi=(J^i_{\eta},\mu_i)
\end{equation}
(cf.~\cite[Remark 11.5.2]{MR3586372}) and thus 
\begin{equation}\label{eqn:order_xi}
\ord \xi \mid \lambda_{\pi}.
\end{equation}
It follows from \eqref{eqn:ixi} that the generic fibers of $J^i(J^j(S))\to C$ and $J^{ij}(S)\to C$ are isomorphic to each other, and taking the relative smooth minimal models of  compactifications of generic fibers, we obtain $J^i(J^j(S))\cong J^{ij}(S)$ as in \eqref{eqn:group_structure}.
  
Take a closed point $x\in C$ and consider the henselization of the local ring $\mc{O}_{C,x}$ and denote it by $\mc{O}_{C,x}^h$.
We also denote the base change of $\pi_0 \colon J^{0}(S) \to C$ by the morphism $\Spec \mc{O}_{C,x}^h\to C$ by 
$$
J^0_x\to \Spec \mc{O}_{C,x}^h.
$$
Then it is known by \cite[Proposition 5.4.3 in p.314, Theorem 5.4.3 in p.321]{MR986969}
 that there exists an exact sequence:
\begin{equation}\label{eqn:brauer}
\begin{array}{ccccccc}
0&\to&  \Br(J^0(S))&\to& WC(J^0_\eta)&\to&  \bigoplus _{x\in C}WC(J^0_x)\\
 &     &&&\rotatebox{90}{$\in$}&&\rotatebox{90}{$\in$}\\
&&&& \xi                       &\mapsto& (\xi_x)_{x\in C}
\end{array}
\end{equation}
Here, we denote  the image  of $\xi$ (given in \eqref{eqn:xi}) in $WC(J^0_x)$ by $\xi_x$. It follows from  \cite[Proposition 5.4.2]{MR986969} that  $m_x=\ord \xi_x$,
where $m_x$ is the multiplicity of the fiber of $\pi$ over the point $x\in C$.
Define 
\begin{equation}\label{eqn:lambda'}
\lambda'_\pi:=\lcm _{x\in C}(m_x)=\ord ((\xi_x)_{x\in C}).
\end{equation}
Since $\ord \xi$ is divided by $\ord ((\xi_x)_{x\in C})$, we see from \eqref{eqn:order_xi} that 
$$
\lambda'_\pi \mid \lambda_\pi.
$$

In particular, if $i\in \Z$ is coprime to $\lambda_{\pi}$, then $i$ is coprime to each $m_x$, and thus we have 
\begin{equation}\label{eqn:ord_iximx}
\ord(i\xi)_x=\ord i(\xi_x)=\ord(\xi_x)=m_x.
\end{equation}
Combining \eqref{eqn:ord_iximx} with \eqref{eqn:ixi}, we know that
the multiplicity of the fiber of $\pi_i$ over the point $x$ is also $m_x$. This shows the first statement of Lemma \ref{lem:SJS} (i). 

Define a subgroup $H'_{\pi}$ of  the group $H_\pi(:=\{ i\in (\Z/\lambda_{\pi}\Z)^* \mid J^i(S)\cong S\}$ given in \eqref{eqn:Hpi}) to be
\begin{align}\label{ali:H'}
H'_{\pi}:=&\{  i \in H_{\pi}   \mid i\equiv 1 \ (\mod \ \lambda'_\pi)  \}.
\end{align}
We use the following lemma to obtain a lower bound of the cardinality  of the set $\FM(S)$.

\begin{lemma}\label{lem:H_1H_2}
Let $\pi\colon S\to C$ be an elliptic surface with $\Br (J^0(S))=0$.
Then we have 
$$ \bigl| H_{\pi}/H'_{\pi}\bigr| \le   \bigr| \Aut_0(J^0_{\eta})\bigr|.$$
\end{lemma}

\begin{proof}
For each $i\in H_{\pi}$,   
fix an isomorphism $\theta_i \colon J^1_{\eta} \to J^i_{\eta}$ over the generic point $\eta \in C$.
Then we obtain a structure of a homogeneous space  on $J^1_{\eta}$ by the action 
\[
\mu_i' := \theta_i^{-1} \circ \mu_i \circ (\theta_i \times \rid_{J^0_{\eta}}) \colon J^1_{\eta} \times J^0_{\eta} \to J^1_{\eta}
\]
such that $(J^i_{\eta}, \mu_i) = (J^1_{\eta}, \mu_i')$ holds in $WC(J^0_{\eta})$ by the definition. 
On the other hand, by \cite[Exercise 10.4]{Si09}, $(J^1_{\eta}, \mu_i') = (J^1_{\eta}, \mu_1 \circ (\id_{J^1_{\eta}}\times \phi))$ for some $\phi \in \Aut_0(J^0_{\eta})$. 
We define an equivalence relation $\sim$ of $\Aut _0(J^0_{\eta})$
such that $$\phi_1\sim \phi_2$$ 
for $\phi_i\in \Aut _0(J^0_{\eta})$
when
$$ 
(J^1_{\eta}, \mu_1 \circ (\id_{J^1_{\eta}}\times \phi_1))
 = (J^1_{\eta}, \mu_1 \circ (\id_{J^1_{\eta}}\times \phi_2)).
$$ 
Then we can define a map 
$$
f \colon H_{\pi}\to  \Aut_0(J^0_{\eta})/{\sim} \quad  i\mapsto \phi.
$$
We  see that $ij^{-1}\in H'_{\pi}$ if and only if $f(i)=f(j)$ as follows. First note that we have an injection 
$$
WC(J^0_\eta)\hookrightarrow\bigoplus _{x\in C}WC(J^0_x) \qquad
\xi=(J^1_{\eta},\mu_1) \mapsto  (\xi_x)_{x\in C}
$$
by the vanishing of the Brauer group $\Br(J^0(S))$ and \eqref{eqn:brauer},
and hence 
\begin{equation}\label{eqn:xi_lambda}
\ord \xi=\lambda'_\pi (:=\ord ((\xi_x)_{x\in C})).
\end{equation}
We observe that  for $i,j\in H_{\pi}$, the condition $f(i)=f(j)$ is equivalent to the equality 
$i\xi = j\xi$ by \eqref{eqn:ixi}, which is also equivalent to $i^{-1}j\in H'_{\pi}$ by \eqref{eqn:xi_lambda}.

Consequently, we obtain an inclusion
$$
H_{\pi}/H'_{\pi}\hookrightarrow \Aut_0(J^0_{\eta})/{\sim}
$$
and the conclusion.
\end{proof}

\section{Elliptic curves and automorphisms}\label{sec:elliptic} 

Let $F$ be an elliptic curve over an algebraically closed field $k$ with $p= \ch k \ge 0$. 
The explicit description of the automorphism group  $\Aut _0(F)$ fixing the origin $O$ is well-known, and is given as follows. 
\begin{theorem}[cf.~Appendix A in \cite{Si09}]\label{thm:AutF}
The automorphism group $\Aut _0(F)$ is
\begin{align*}
&\Z/2\Z                          &\mbox{ if } &j(F)\ne 0,1728,     \\
&\Z/4\Z                          &\mbox{ if } &j(F)=1728 \mbox{ and } p\ne 2,3,\\
&\Z/6\Z                          &\mbox{ if } &j(F)=0 \mbox{ and } p\ne 2,3,  \\
&\Z/3\Z \rtimes \Z/4\Z   &\mbox{ if } &j(F)=0=1728 \mbox{ and } p=3,\\
&Q \rtimes \Z/3\Z        &\mbox{ if } &j(F)=0=1728 \mbox{ and } p=2.
\end{align*}
Note that
in the last second case, $\Z/4\Z$ acts on $\Z/3\Z$ in the unique non-trivial way, and
in the last case, the group is so called a binary tetrahedral group, and $Q$ is the quaternion group.
In the last two cases $F$ is necessarily supersingular. 
\end{theorem}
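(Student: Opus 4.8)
The plan is to realize $F$ by a Weierstrass equation and to identify $\Aut_0(F)$ with the group of admissible coordinate changes preserving that equation. Recall that an automorphism fixing $O$ corresponds to a substitution $x=u^2x'+r$, $y=u^3y'+u^2sx'+t$ with $u\in k^*$ and $r,s,t\in k$ carrying the chosen equation to itself; these form a group, and sending such a transformation to the scalar $u$ by which it acts on the invariant differential gives a homomorphism $\Aut_0(F)\to k^*$. The strategy is to normalize the equation as far as the characteristic allows, write down the conditions on $(u,r,s,t)$, and read off the group. First, suppose $p\ne 2,3$. Completing the square and the cube reaches the short form $y^2=x^3+Ax+B$, which forces $r=s=t=0$, so that an automorphism is just $(x,y)\mapsto(u^2x,u^3y)$ with $u^4A=A$ and $u^6B=B$. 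Here $1728\ne 0$ in $k$, so the three cases $j\ne 0,1728$, $j=1728$ (i.e.\ $B=0$), and $j=0$ (i.e.\ $A=0$) are genuinely distinct; they give $u^2=1$, $u^4=1$, $u^6=1$, and since $p\ne 2,3$ the corresponding groups of roots of unity $\mu_2,\mu_4,\mu_6$ are cyclic of orders $2,4,6$. This settles the first three lines.

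Next, for $p=3$ one has $1728=0$, so $j=0$ and $j=1728$ coincide and the special case is $j=0$. Completing the square gives $y^2=x^3+a_2x^2+a_4x+a_6$, and a short computation with $c_4$ shows $j=0$ iff $a_2=0$; the case $a_2\ne 0$ yields $\Aut_0(F)=\Z/2\Z$ as in the generic case. For $j=0$ I would reduce to the supersingular model $y^2=x^3-x$ and substitute $x=u^2x'+r$, $y=u^3y'$. Using $(u^2x'+r)^3=u^6x'^3+r^3$ in characteristic $3$, the equation reduces to $u^4=1$ and $r^3-r=0$; thus $r$ ranges over $\bbF_3$, giving a $\Z/3\Z$ of translations, and $u$ over $\mu_4\cong\Z/4\Z$. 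Conjugating the translation by $r$ through the scaling by $u$ sends $r\mapsto u^2r$, and since $u^2=\pm 1$ the generator of $\Z/4\Z$ acts by $r\mapsto -r$, the unique nontrivial automorphism of $\Z/3\Z$. This produces $\Z/3\Z\rtimes\Z/4\Z$ of order $12$.

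Finally, for $p=2$ again $j=1728=0$ and the special case is $j=0$, with supersingular model $y^2+y=x^3$ (ordinary curves $y^2+xy=x^3+a_6$ giving $\Z/2\Z$). For $y^2+y=x^3$ I would substitute the full transformation $x=u^2x'+r$, $y=u^3y'+u^2sx'+t$ and compare coefficients in characteristic $2$. This yields $u^3=1$, $r=s^2$, $s^4=s$ (so $s\in\bbF_4$), and $t^2+t=s^6$; counting gives $3\cdot 4\cdot 2=24$ transformations. The subgroup with $u=1$ has order $8$, and I would check it is the quaternion group $Q$ by exhibiting its unique involution, namely $[-1]\colon(x,y)\mapsto(x,y+1)$, and verifying that the remaining nontrivial elements have order $4$; the cube roots of unity give a complementary $\Z/3\Z$, so $\Aut_0(F)\cong Q\rtimes\Z/3\Z$, the binary tetrahedral group. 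In both the $p=3$ and $p=2$ special cases $F$ is supersingular, since $j=0$ is the unique supersingular $j$-invariant in these characteristics.

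The main obstacle will be the characteristic $2$ computation: carefully tracking the four coupled conditions on $(u,r,s,t)$ that preserve $y^2+y=x^3$, and then correctly identifying the resulting order-$24$ group. In particular, recognizing the order-$8$ subgroup as $Q$ rather than $\Z/4\Z\times\Z/2\Z$ or the dihedral group requires confirming the uniqueness of the involution and the semidirect action of the cube roots of unity. The characteristic-$3$ case is analogous but lighter, and the characteristics $\ne 2,3$ are routine.
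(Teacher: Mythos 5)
Your proposal is correct; the paper itself offers no proof of this theorem, quoting it from Appendix A of \cite{Si09}, and your Weierstrass-coordinate argument is precisely the computation carried out in that reference (I checked the details: for $p\ne 2,3$ the constraints $u^4A=A$, $u^6B=B$ give $\mu_2,\mu_4,\mu_6$; in characteristic $3$ the group law $(u_1,r_1)(u_2,r_2)=(u_1u_2,\,u_1^2r_2+r_1)$ yields the stated $\Z/3\Z\rtimes\Z/4\Z$; and in characteristic $2$ the conditions $u^3=1$, $r=s^2$, $s^4=s$, $t^2+t=s^6$ give order $24$, with the $u=1$ subgroup of order $8$ having the unique involution $(x,y)\mapsto(x,y+1)$ and six elements of order $4$, hence $Q$, on which the cube roots of unity act nontrivially). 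Nothing further is needed.
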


For points $x_1, x_2 \in F$, to distinguish the summation of divisors and of elements in the group scheme $F$, we denote by $x_1 \oplus x_2$ the sum of them by the operation of $F$, and
\[
i \cdot x_1 := x_1 \oplus \cdots \oplus x_1 \quad \mbox{($i$ times)}. 
\]
Furthermore, we use the symbol $T_a$ to stand for the translation by $a\in F$:
$$
T_a\colon F\to F \quad x\to a\oplus x.
$$
We also denote by
\[
ix_1 := x_1 + \cdots + x_1 \quad \mbox{($i$ times)}
\]
the divisors on $F$ of degree $i$. 
We denote the dual abelian variety $\Pic^0F$ of $F$ by $\hat{F}$. 
It is well-known that there exists a group scheme isomorphism
\begin{equation}\label{eqn:dual}
F \to \hat{F} \quad x \mapsto \cO_F(x-O),
\end{equation}
where $O$ is the origin of $F$. 

We will use the following lemma several times. 

\begin{lemma}\label{lem:action}
Take a point $a \in F$ with $\ord(a) \ge 4$, and $\phi \in \mathrm{Aut}_0(F)$. If $\phi(a) = a$, then $\phi = \rid_F$. 
\end{lemma}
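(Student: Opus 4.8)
The plan is to argue by contradiction: assume $\phi\neq\rid_F$ and deduce $\ord(a)\le 3$, which contradicts $\ord(a)\ge 4$. Set $\psi:=\phi-\rid_F\in\End(F)$; this is a nonzero endomorphism since $\phi\neq\rid_F$. The hypothesis $\phi(a)=a$ says precisely that $\psi(a)=\phi(a)\ominus a=O$, so $a\in\Ker\psi$. As $\psi\neq 0$, its kernel is a finite group scheme, and the number of its geometric points is the separable degree, so $\#(\Ker\psi)(k)=\deg_s\psi\le\deg\psi$. In particular $a$ is a torsion point whose order is controlled by $\deg\psi$, and it suffices to bound $\deg\psi$ together with the exponent of $(\Ker\psi)(k)$. (If $a$ had infinite order the conclusion would be immediate, since $\Ker\psi$ finite forces $\psi=0$.)

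First I compute $\deg\psi$. Whether $\End(F)\otimes\bbQ$ is an imaginary quadratic field or, in the supersingular cases, a quaternion algebra, the degree map coincides with the reduced norm $\mathrm{Nrd}$, and $\phi$ being an automorphism gives $\mathrm{Nrd}(\phi)=\deg\phi=1$. Hence
\[
\deg\psi=\mathrm{Nrd}(\phi-1)=\mathrm{Nrd}(\phi)-\mathrm{Trd}(\phi)+1=2-\mathrm{Trd}(\phi).
\]
By Theorem \ref{thm:AutF} every element of $\Aut_0(F)$ has order in $\{1,2,3,4,6\}$, so a nontrivial $\phi$ is a root of unity with $\mathrm{Trd}(\phi)\in\{-2,-1,0,1\}$ according as $\ord(\phi)=2,3,4,6$. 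This gives $\deg\psi\in\{4,3,2,1\}$ respectively. Note that $\deg\psi=4$ forces $\mathrm{Trd}(\phi)=-2$, hence $(\phi+1)^2=0$; since $\End(F)$ is an order in a division algebra and has no nonzero nilpotents, this means $\phi=[-1]$, the unique order-$2$ element, with $\psi=[-2]$.

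It remains to convert these degree bounds into order bounds on $a$. When $\ord(\phi)\in\{3,4,6\}$ we have $\deg\psi\le 3$, hence $\#(\Ker\psi)(k)\le 3$ and $\ord(a)\le 3$. When $\phi=[-1]$ we have $\Ker\psi=\Ker[2]=F[2]$, whose geometric points form a group of exponent at most $2$: it is $(\Z/2)^2$ when $p\neq 2$, and is trivial in the supersingular characteristic-$2$ case, where $[2]$ is purely inseparable. In either situation $\ord(a)\le 2$. Thus in every case $\ord(a)\le 3$, contradicting $\ord(a)\ge 4$; therefore $\phi=\rid_F$.

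The main obstacle is the case $\phi=[-1]$: here the crude bound $\deg\psi=4$ does not by itself exclude a fixed point of order $4$, and one must use the sharper fact that $\Ker[2]$ has exponent $2$ rather than merely order $4$. A secondary point is that in characteristics $2$ and $3$ the isogeny $\psi$ may be inseparable, so $\#(\Ker\psi)(k)$ can be strictly smaller than $\deg\psi$; this only strengthens the bound on $\ord(a)$ and causes no difficulty.
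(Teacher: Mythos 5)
Your proof is correct, but it takes a genuinely different route from the paper's. The paper argues directly from the relation $\phi(a)=a$ using polynomial identities: since every $\phi\in\Aut_0(F)$ has order in $\{1,2,3,4,6\}$ by Theorem \ref{thm:AutF}, either some power $\phi^i=-\rid_F$ (orders $2,4,6$), whence $-1\cdot a=a$ and so $2\cdot a=O$, or $\ord(\phi)=3$, in which case $(\phi-\rid_F)(\phi^2+\phi+\rid_F)=0$ in the domain $\End(F)$ forces $\phi^2+\phi+\rid_F=0$ and evaluation at $a$ gives $3\cdot a=O$; both contradict $\ord(a)\ge 4$. You instead pass to the isogeny $\psi=\phi-\rid_F$ and bound the fixed locus by the norm form: $\deg\psi=2-\mathrm{Trd}(\phi)\in\{1,2,3,4\}$ and $\#(\Ker\psi)(k)\le\deg_s\psi\le\deg\psi$, with the order-$2$ case $\phi=[-1]$ handled separately via the exponent of $F[2]$. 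Both arguments rest on the same two inputs --- the order classification and the absence of zero divisors in $\End(F)$ (the paper uses it to cancel $\phi-\rid_F$; you use it to exclude nilpotents and pin down $\phi=[-1]$ when $\mathrm{Trd}(\phi)=-2$) --- but your mechanism is degree/kernel counting rather than evaluating a quadratic relation at $a$. What your route buys: a uniform quantitative statement (the fixed-point group of any nontrivial $\phi$ has exponent at most $3$; for $\ord(\phi)=6$ the only fixed point is $O$, since $\psi$ is then an automorphism), an explicit reason why inseparability in characteristics $2$ and $3$ is harmless, and a clean derivation that the unique involution is $[-1]$, a fact the paper's step ``$\phi^i=-\rid_F$ for some $i$'' uses silently for the nonabelian supersingular groups. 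What the paper's route buys: it is more elementary, needing no degree or separability theory, and it disposes of all even orders at once by reducing to $-\rid_F$ --- precisely the case where your crude bound $\deg\psi=4$ is insufficient and you correctly had to invoke that $F[2]$ has exponent $2$. One cosmetic remark: $\End(F)$ may be just $\Z$ (which your phrasing ``imaginary quadratic field or quaternion algebra'' omits), but there $\Aut_0(F)=\{\pm\rid_F\}$ and your separate treatment of $\phi=[-1]$ covers it, so nothing is lost.
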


\begin{proof}
In any of the cases in Theorem \ref{thm:AutF}, we have $\ord(\phi)\in \{1, 2, 3, 4,6\}$. Let us first consider the case $\ord(\phi)= 2, 4$ or $6$. 
In this case, $\phi^i = -\rid_F$ for some $i \in \Z$, and hence we get $-1\cdot a = a$.
The condition $\ord(a) \ge 4$  yields a contradiction. 
Next, consider the case $\ord(\phi) = 3$. Then we have
\[
(\phi-\rid_F)(\phi^2+\phi+\rid_F)=0
\]
in the domain $\End(F)$, which implies that $\phi^2+\phi+\rid_F=0$, and hence 
$\phi^2(a) \oplus \phi(a) \oplus a = O$. By the assumption $\phi(a)=a$, we see that $3 \cdot a = O$. This is absurd by $\ord(a) \ge 4$. 
\end{proof}

For a non-zero integer $m$, we define the $m$-torsion subgroup of $F$ to be 
\[
F[m] := \{ a \in F \mid m \cdot a = O \}. 
\]
Equivalently, $F[m]$ is the kernel of the multiplication map by $m$. 
Recall that  
\begin{equation*}
F[m]= \begin{cases}
		\Z/p^e\Z & \text{if $F$ is ordinary, $m=p^e$, $e>0$} \\
		\{O\} & \text{if $F$ is supersingular, $m=p^e$, $e>0$}\\
		\Z/m\Z\times \Z/m\Z &  \text{if $p \nmid m$}.
	\end{cases}
\end{equation*} 
(See \cite[Corollary III.6.4]{Si09}.) Note that these $3$ cases do not exhaust all possibilities (i.e., cases where $m$ is divisible by $p$ but is not power of $p$ is not covered.)

Take $a \in F$ with $\ord(a) = m$. 
In order to count Fourier--Mukai partners of elliptic ruled surfaces, we need to study the subgroup 
\begin{equation}\label{eqn:def_H}
H^a_F := \{ i \in (\Z/m\Z)^* \, | \, \exists \phi \in \Aut_0(F) \, \text{such that} \, \phi(a) = i \cdot a \} 
\end{equation}
of  $(\Z/m\Z)^*$. Note that the definition of $H^\mc{L}_{\hat{E}}$ given in \eqref{eqn:def_H00}
is compatible with \eqref{eqn:def_H}.
We obtain the following result as a direct consequence of  Lemma \ref{lem:action}.
\begin{lemma}\label{lem:injection}
Take $a \in F$ with $\ord(a)  \ge 4$.  
\begin{enumerate}
\item
We have an injective group homomorphism
\begin{equation}\label{eqn:HaF}
\iota\colon H^a_F \hookrightarrow \Aut_0(F). 
\end{equation}
Furthermore, we have
$|H^a_F|=2,4$ or $6$.
\item
Suppose that $p>0$ and $\ord(a)=p^e$.
Then \eqref{eqn:HaF} is an isomorphism. 
\end{enumerate}
\end{lemma}
\begin{proof}
(i) Take $i\in H^a_F$. Then there exists $\phi\in\Aut_0(F)$ such that $\phi(a)=i\cdot a$, and define $\iota (i)$ to be $\phi$. The well-definedness of $\iota$ follows from  Lemma \ref{lem:action}, and $\iota$ is injective by the definition.
Since $H^a_F$ is regarded as an abelian subgroup of $\Aut_0(F)$ described in Theorem \ref{thm:AutF}, and $H^a_F$ contains $\{\pm 1\}$ as a subgroup, we obtain the second assertion. 

(ii) The existence of an order $p^e$ element in $F$ implies that $F$ is ordinary.
Since $F[p^e] = \Z/p^e\Z = \langle a \rangle$, for any $\phi \in \Aut_0(F)$ we see that $\phi(a) =i\cdot a$ for some $i \in (\Z/p^e\Z)^*$. Hence the injective  homomorphism in \eqref{eqn:HaF} is surjective, and then 
we can confirm the statement.
\end{proof}
From now on, by \eqref{eqn:HaF} we often regard $H^a_F$ as a subgroup of $\Aut_0(F)$ when $\ord a\ge 4$. 

\section{Pirozhkov's result and its application}\label{sec:Pirozhkov}

In this section, we summarize some definitions and results in \cite{Pi20b}, and give their application to the Popa--Schnell conjecture. 
We also refer to \cite{Pe19} for fundamental notions of $\infty$-categories. 

For a Noetherian scheme $S$ over $k$, we denote by $\mathrm{Perf}(S)$ the full subcategory of $D^b(S)$ consisting of perfect complexes.
A stable $k$-linear $\infty$-category $\cD$ is said to be $S$-linear if there exists an action functor 
\[
a_{\cD} \colon \cD \times \mathrm{Perf}(S) \to \cD
\]
together with associativity data. 

For a morphism $f \colon X \to S$ between smooth projective varieties $X$ and $S$ over $k$, the category $D^b(X)$ has a natural $S$-linear structure via the functor 
$$
D^b(X) \times D^b(S) \to D^b(X) \quad (\mc{E},\mc{F}) \mapsto \mc{E} \Lotimes_{X} \mathbb{L}f^*\mc{F}.
$$

\begin{definition}[\cite{Pi20b}]
\label{definition:NSSI}
Let $S$ be a Noetherian scheme over a field $k$. We say that $S$ is \emph{noncommutatively stably semiorthogonally indecomposable}, or \emph{NSSI} for brevity, if for arbitrary choices of 
\begin{enumerate}
\item $\mathcal{D}$, a $S$-linear category which is proper\footnote{See \cite{Pe19} for this notion.}  over $S$ and has a classical generator, and
\item $\mathcal{A}$, a left admissible subcategory  of $\mathcal{D}$ which is linear over $k$, 
\end{enumerate}
the subcategory $\mathcal{A}$ is closed under the action of $\mathrm{Perf}(S)$ on $\mathcal{D}$. 
\end{definition}

\begin{remark}
For a quasi-compact and quasi-separated scheme $S$, the category $\mathrm{Perf}(S)$ has a classical generator by \cite[Corollary 3.1.2]{BB03}. In particular, for a smooth projective variety $S$, the category $D^b(S)$ has a classical generator. 
\end{remark}

\begin{theorem}[Lemma 6.1 in \cite{Pi20b}]
\label{theorem:fibrations}
Let $\pi \colon X \to S$ be a smooth projective morphism which is an \'{e}tale-locally trivial fibration with fiber $X_0$. Assume that $S$ is a connected excellent scheme\footnote{In \cite[Lemma 6.1]{Pi20b}, Pirozhkov assumes that $S$ is a scheme over $\mathbb{Q}$, but it is not needed in its proof.}.
Then for any point $s \in S$ the base change map
\[
\begin{Bmatrix}
\textup{$S$-linear admissible} \\ \textup{subcategories} \\ \mathcal{A} \subset D^b(X)
\end{Bmatrix}
\xrightarrow{\textup{restriction to $X_s \cong X_0$}} 
\begin{Bmatrix}
\textup{admissible subcategories} \\ \mathcal{A}_0 \subset D^b(X_0)
\end{Bmatrix} 
\]
is an injection. 
\end{theorem}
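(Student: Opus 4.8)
The plan is to translate the statement about admissible subcategories into one about idempotent relative Fourier–Mukai kernels, and then to exploit the fact that idempotents are rigid in families. Recall that an $S$-linear admissible subcategory $\cA \subset D^b(X)$ is the same datum as its projection functor $\cA \hookrightarrow D^b(X) \twoheadrightarrow \cA$, an $S$-linear idempotent endofunctor of $D^b(X)$. By the standard identification of $S$-linear endofunctors with relative kernels, this functor is convolution with an object $P_{\cA} \in D^b(X \times_S X)$ satisfying $P_{\cA} \ast P_{\cA} \simeq P_{\cA}$, and $\cA$ is recovered as the image of the associated projector. Base change along $\{s\} \hookrightarrow S$ carries $P_{\cA}$ to the projection kernel of the fiber category $\cA_0 = \cA_s \subset D^b(X_s) \cong D^b(X_0)$. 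Hence the map of the theorem is identified with $P_{\cA} \mapsto (P_{\cA})_s$, and it suffices to prove that an idempotent $S$-linear kernel is determined by its restriction to a single fiber.

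The core observation I would use is that idempotent objects are discrete: in a monoidal stable category there are no nonzero maps between distinct idempotents and no nontrivial automorphisms of a given one, so isomorphism classes of idempotent kernels form a set with no infinitesimal deformations. I would package this as a presheaf $\mathcal{I}$ on the small \'etale site of $S$, sending an \'etale $T \to S$ to the set of idempotent $T$-linear kernels on $X_T \to T$. Then $\cA \mapsto P_{\cA}$ identifies $\mathcal{I}(S)$ with the set of $S$-linear admissible subcategories, while the stalk $\mathcal{I}_s$ is the set of admissible subcategories of $D^b(X_0)$.

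Next I would use the \'etale-local triviality of $\pi$ to show that $\mathcal{I}$ is \emph{locally constant}. Over an \'etale neighborhood $U \to S$ of $s$ realizing $X \times_S U \cong X_0 \times U$, the relative kernels lie in $D^b(X_0 \times X_0 \times U)$ with its $\mathrm{Perf}(U)$-action, and the discreteness of idempotents forces a $U$-linear idempotent kernel to have no variation along $U$: after the given trivialization its restrictions to any two fibers agree, so it is pulled back from an idempotent kernel on $X_0 \times X_0$. In sheaf-theoretic terms $\mathcal{I}|_U$ is the constant sheaf with value $\mathcal{I}_s$. Granting this, $\mathcal{I}$ is a locally constant sheaf of sets on the connected scheme $S$, and for any such sheaf the restriction $\mathcal{I}(S) \to \mathcal{I}_s$ to a stalk is injective: two global sections agreeing at $s$ agree on a neighborhood, and the agreement locus is open and closed, hence all of $S$. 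This injectivity is exactly the assertion of the theorem.

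The main obstacle is the rigidity input in the previous step, namely making precise that a $\mathrm{Perf}(U)$-linear idempotent kernel on the trivial family $X_0 \times U$ is pulled back from the fiber. This is where the hypotheses are spent: \'etale-local triviality reduces everything to the product situation in which the $\mathrm{Perf}(U)$-action is transparent, the excellence of $S$ controls the \'etale site and the spreading-out of the kernel, properness of the $S$-linear categories guarantees the functors are of Fourier–Mukai type, and connectedness is needed only at the end to propagate fiberwise agreement. I expect the delicate point to be comparing the abstract discreteness of idempotent objects with honest local constancy of the kernel over a positive-dimensional base, which requires either a deformation-theoretic vanishing (the tangent space to idempotents is zero) or a direct descent argument identifying $U$-linear idempotents with $\mathrm{Perf}(U) \otimes (\text{idempotents for } X_0)$.
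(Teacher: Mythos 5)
First, a point of comparison: the paper does not actually prove this statement. It is imported verbatim from Pirozhkov \cite[Lemma 6.1]{Pi20b}, with only the footnote observing that the $\mathbb{Q}$-hypothesis in loc.~cit.\ can be dropped. So the only comparison available is with the cited proof, and your architecture does mirror it: Pirozhkov's argument runs through the moduli theory of semiorthogonal decompositions of Belmans--Okawa--Ricolfi, where admissible subcategories are encoded by projection kernels, rigidity makes the relevant moduli \'etale over the base, \'etale-local triviality of $\pi$ turns the data into a locally constant object, and connectedness of $S$ gives injectivity. At the level of skeleton, your proposal is a faithful reconstruction, and you correctly identify where the real work lies.

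However, at exactly that place there is a genuine gap. Your pivotal claim --- that over a trivializing chart $U$, a $U$-linear idempotent kernel on $X_0 \times U$ ``has no variation along $U$'' because ``idempotents are discrete'' --- is a non sequitur. The discreteness you invoke (in the style of Boyarchenko--Drinfeld: unital idempotents in a fixed monoidal category form a poset; note even this is weaker than your assertion, since comparable distinct idempotents do admit nonzero maps) concerns morphisms \emph{within one} monoidal category. It says nothing about how a single object of $D^b(X_0 \times X_0 \times U)$, idempotent for relative convolution, restricts to the various fibers over $U$; ``restrictions to any two fibers agree, hence the kernel is pulled back'' is precisely the theorem in miniature, not a consequence of poset-discreteness. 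What is actually needed is the deformation-theoretic theorem that idempotent kernels (equivalently, semiorthogonal decompositions) lift uniquely along square-zero extensions, so that the moduli functor of SODs is representable by an algebraic space \'etale over the base and compatible with base change; only then is the space for the constant family of the form $\mathrm{SOD}_{X_0} \times U$ with fibers \'etale over $k$, sections over connected $U$ are constant, and your sheaf $\mathcal{I}$ is honestly locally constant. That theorem has real content (obstruction theory via the triangle relating the projection kernel to the diagonal, Hochschild-cohomological computations, and representability), and your proposal acknowledges the delicate point but does not supply it. Two secondary caveats: identifying $S$-linear admissible subcategories with idempotent kernels requires the enhanced ($\infty$-categorical) setting --- in the bare triangulated world, projection functors being of Fourier--Mukai type was a conjecture of Kuznetsov, not a ``standard identification'' --- and your stalk computation $\mathcal{I}_s = \{\text{admissible subcategories of } D^b(X_0)\}$ itself depends on the local-constancy step rather than preceding it. With the Belmans--Okawa--Ricolfi input granted, your argument closes up and is essentially the cited proof; without it, the central step is unproven.
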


\begin{definition}
Let $\pi \colon X \to S$ be a smooth projective morphism of Noetherian schemes. 
\begin{enumerate}        
\item
An object $\mc{E}\in \mathrm{Perf} (X)$ is $\pi$-\emph{exceptional} if 
$\mathbb{R}\pi_*\mcRHom _X(\mc{E},\mc{E})\cong\mc{O}_S$.
\item
A collection of $\pi$-exceptional objects
\(
    \mc{E}_{ 1 }, \dots, \mc{E} _{ N } \in \mathrm{Perf} (X)
\)
is a $\pi$-\emph{exceptional collection} if
$\mathbb{R} \pi_{ \ast } \RHom ( \cE _{ j }, \cE _{ i } ) = 0$
for any $1 \le i < j \le N$.
\item
A $\pi$-\emph{exceptional pair} is a $\pi$-exceptional collection of length 2.
\end{enumerate}
\end{definition}

For a $\pi$-exceptional pair
\(
    \mc{E}, \mc{F}
\),
the left $\pi$-mutation
\(
    L_{ \mc{E} } \mc{F}
\)
of \( \mc{F} \) through \( \mc{E} \) 
and the right $\pi$-mutation
\(
    R _{ \mc{F} } \mc{E}
\)
of \( \mc{E} \) through \( \mc{F} \) are defined by the following distinguished triangles:
\begin{align*}
     \pi ^{ \ast } \mathbb{R} \pi _{ \ast } \mcRHom _{ X } ( \mc{E}, \mc{F} ) 
     \otimes _{ \mc{O} _{X } } \mc{E}
     \xrightarrow{ \varepsilon }
     \mc{F}
     \to
     L _{ \mc{E} } \mc{F}\\
     R _{ \mc{F} } \mc{E}
     \to
     \mc{E}
     \xrightarrow{ \eta }
     \pi ^{ \ast } \mathbb{R} \pi _{ \ast } \mcRHom _{X} ( \mc{E}, \mc{F} ) ^{ \vee } \otimes _{ \mc{O} _{X} } \mc{F}
\end{align*}
We  see that mutations commute with base change.

\begin{lemma}[Lemma 2.22 in \cite{IOU21}]\label{lm:base change}
Consider the following Cartesian square of finite dimensional Noetherian schemes, where $\pi$ is smooth projective.
\[ 
\xymatrix{ 
Y\ar[r]^f \ar[d]_{\varphi} &   X    \ar[d]^{\pi} \\
T\ar[r]_g &  S  
}
\]
   For any \( \pi \)-exceptional pair
    \(
        \left( \cE, \cF \right)
    \), it follows that
    \(
        \left( f ^{ \ast } \cE, f^{ \ast }\cF \right)
    \)
    is an \( \varphi \)-exceptional pair and we have the following isomorphisms:
\begin{align*}
    L _{f ^{\ast}\cE} \left(f ^{\ast}\cF\right) \simeq f ^{\ast}( L _{\cE } \cF )\\
    R _{f ^{\ast}\cF} \left(f ^{\ast}\cE \right) \simeq  f^{\ast}( R _{\cF } \cE )
\end{align*}
\end{lemma}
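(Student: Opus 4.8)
The plan is to pull back the triangles that define the mutations along $f$ and to recognize the results as the triangles defining the mutations of the pulled-back pair. Two base-change compatibilities make this work. First, since $\pi$ is smooth, hence flat, the given Cartesian square is Tor-independent, so flat base change supplies a natural isomorphism $g^*\mathbb{R}\pi_*\mc{G}\simeq \mathbb{R}\varphi_* f^*\mc{G}$ for $\mc{G}\in D^b(X)$, where all functors are derived. Second, since $\cE$ is perfect it is dualizable, so $\mcRHom_X(\cE,\cF)\simeq \cE^\vee\Lotimes\cF$, and the monoidality of $f^*$ gives $f^*\mcRHom_X(\cE,\cF)\simeq\mcRHom_Y(f^*\cE,f^*\cF)$. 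Combining the two yields $\mathbb{R}\varphi_*\mcRHom_Y(f^*\cE,f^*\cF)\simeq g^*\mathbb{R}\pi_*\mcRHom_X(\cE,\cF)$, and similarly with the roles of $\cE$ and $\cF$ interchanged.

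From these isomorphisms the first assertion is immediate. Putting $\cE=\cF$ gives $\mathbb{R}\varphi_*\mcRHom_Y(f^*\cE,f^*\cE)\simeq g^*\mc{O}_S\simeq\mc{O}_T$, so $f^*\cE$ is $\varphi$-exceptional, and likewise for $f^*\cF$; applying the same identity to the pair $(\cF,\cE)$ gives $\mathbb{R}\varphi_*\RHom(f^*\cF,f^*\cE)\simeq g^*\,0=0$. Hence $(f^*\cE,f^*\cF)$ is a $\varphi$-exceptional pair.

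For the left mutation I would apply the triangulated functor $f^*$ to the defining triangle $\pi^*\mathbb{R}\pi_*\mcRHom_X(\cE,\cF)\otimes_{\mc{O}_X}\cE\xrightarrow{\varepsilon}\cF\to L_\cE\cF$. Because $\pi\circ f=g\circ\varphi$ we have $f^*\pi^*=\varphi^*g^*$, so, using monoidality of $f^*$ together with the isomorphisms above, the source becomes $\varphi^*\mathbb{R}\varphi_*\mcRHom_Y(f^*\cE,f^*\cF)\otimes_{\mc{O}_Y}f^*\cE$, which is exactly the source of the map $\varepsilon'$ defining $L_{f^*\cE}f^*\cF$. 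It then remains to check that, under these canonical identifications, $f^*\varepsilon$ is identified with $\varepsilon'$. This is the naturality of the two morphisms out of which $\varepsilon$ is built: the counit $\pi^*\mathbb{R}\pi_*(-)\to(-)$ of the adjunction $(\pi^*,\mathbb{R}\pi_*)$ and the evaluation $\mcRHom_X(\cE,\cF)\otimes_{\mc{O}_X}\cE\to\cF$, both of which are compatible with the pullback $f^*$. Granting this, $f^*L_\cE\cF$ completes $f^*\varepsilon$ to a triangle isomorphic to the defining triangle of $L_{f^*\cE}f^*\cF$, and uniqueness of cones gives $L_{f^*\cE}f^*\cF\simeq f^*L_\cE\cF$.

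The right-mutation identity follows dually from the triangle $R_\cF\cE\to\cE\xrightarrow{\eta}\pi^*\mathbb{R}\pi_*\mcRHom_X(\cE,\cF)^\vee\otimes_{\mc{O}_X}\cF$; here one additionally uses that $\mathbb{R}\pi_*\mcRHom_X(\cE,\cF)$ is perfect over $S$, as $\pi$ is proper and $\cE,\cF$ are perfect, so that taking duals commutes with $g^*$. The main obstacle is precisely the last verification in each case, namely the compatibility of the structure maps $\varepsilon$ and $\eta$ with $f^*$; everything else is formal once flat base change and monoidality are in hand. I would phrase this compatibility as the naturality of the relevant (co)unit and evaluation morphisms, which can be checked directly on the level of adjunctions.
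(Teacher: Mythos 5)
Your argument is correct and is essentially the proof of Lemma 2.22 in \cite{IOU21}, which the paper imports by citation without reproving: derived base change along the Tor-independent Cartesian square, monoidality of $f^*$ on perfect complexes (so $f^*\mcRHom_X(\cE,\cF)\simeq\mcRHom_Y(f^*\cE,f^*\cF)$ and, for the right mutation, $g^*$ commutes with dualizing the perfect complex $\mathbb{R}\pi_*\mcRHom_X(\cE,\cF)$), compatibility of the counit and evaluation maps with pullback, and then comparison of cones via TR3 and the five lemma. The one point worth making explicit is that your Tor-independence claim rests on flatness of $\pi$, not of $\varphi$; this is legitimate because $\pi$-exceptional pairs are only defined when $\pi$ itself is smooth projective, so smoothness of $\pi$ is implicit in the hypotheses even though the lemma's wording only mentions $\varphi$.
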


We apply Theorem \ref{theorem:fibrations} and Lemma \ref{lm:base change} to obtain the following.


\begin{proposition}\label{prop:adm_on_proj}
Let $\pi\colon X \to S$ be a $\mathbb{P}^n$-bundle ($n=1,2$) over a smooth projective variety $S$.
Then any non-trivial $S$-linear admissible subcategory of $D^b(X)$ is of the following form:
\begin{enumerate}
\item
(Case $n=1$)
$$D^b(S)(i) (:= \pi^*D^b(S)\otimes_{\mc{O}_X}\mc{O}_X(i))$$ for some $i\in \Z$.
\item
(Case $n=2$)
$$\pi^*D^b(S)\otimes_{\mc{O}_X}\Span{\mc{E}_1,\ldots,\mc{E}_l},$$
where $\mc{E}_1,\ldots,\mc{E}_l$ ($1\le l\le n+1$) is a $\pi$-exceptional collection. 
\end{enumerate}
\end{proposition}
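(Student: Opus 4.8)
The plan is to exploit the semiorthogonal decomposition of a projective bundle together with Theorem \ref{theorem:fibrations} to reduce the classification of $S$-linear admissible subcategories to the well-understood case of admissible subcategories of $D^b(\PP^n)$ for the fiber. First I would recall that for a $\PP^n$-bundle $\pi\colon X\to S$, Orlov's projective bundle formula gives a semiorthogonal decomposition
\[
D^b(X)=\langle D^b(S)(0),D^b(S)(1),\ldots,D^b(S)(n)\rangle,
\]
so $X_0=\PP^n$ has the full exceptional collection $\langle \mc{O},\mc{O}(1),\ldots,\mc{O}(n)\rangle$. By Theorem \ref{theorem:fibrations}, the restriction functor from $S$-linear admissible subcategories $\cA\subset D^b(X)$ to admissible subcategories $\cA_0\subset D^b(\PP^n)$ is injective, so it suffices to (a) classify the admissible subcategories $\cA_0$ of $D^b(\PP^n)$, and (b) for each one that is hit, produce the $S$-linear admissible subcategory claimed in the statement and verify its restriction is $\cA_0$.

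For step (a), since $D^b(\PP^1)$ and $D^b(\PP^2)$ are generated by strong full exceptional collections, every admissible subcategory is generated by an exceptional collection, and by Bondal--Orlov/Beilinson these are obtained by mutations of subcollections of $\langle\mc{O},\mc{O}(1),\ldots,\mc{O}(n)\rangle$. For $n=1$ the admissible subcategories are $0$, all of $D^b(\PP^1)$, and the one-dimensional pieces generated by a single line bundle, which are exactly $\langle\mc{O}(i)\rangle$ for $i\in\Z$. For $n=2$ they are generated by exceptional collections of length $1\le l\le 3$. The next step is to lift these to $S$-linear subcategories: for $n=1$ I would check directly that $D^b(S)(i)=\pi^*D^b(S)\otimes\mc{O}_X(i)$ is $S$-linear and admissible, and that its fiberwise restriction is $\langle\mc{O}(i)\rangle$, giving a bijection onto the relevant fibers. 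For $n=2$, given a $\pi$-exceptional collection $\mc{E}_1,\ldots,\mc{E}_l$, the category $\pi^*D^b(S)\otimes\langle\mc{E}_1,\ldots,\mc{E}_l\rangle$ is $S$-linear and admissible, and its restriction to the fiber is the exceptional collection $(\mc{E}_1)_{X_s},\ldots,(\mc{E}_l)_{X_s}$; here Lemma \ref{lm:base change} guarantees that the mutation structure, hence admissibility, is preserved under base change.

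The main obstacle will be the surjectivity part: showing that \emph{every} admissible $\cA_0\subset D^b(\PP^n)$ actually arises by restriction of an $S$-linear admissible subcategory, so that the list in the statement is exhaustive rather than merely a source of examples. Equivalently, given an exceptional collection of line bundles (or their mutations) on the fiber $X_s$, I must globalize it to a $\pi$-exceptional collection on $X$. The key tool is that $\pi$-exceptionality is an open-and-relatively-representable condition and, more usefully, that the relative versions of the generators $\mc{O}(i)$ and their $\pi$-mutations are already globally defined on $X$; applying Lemma \ref{lm:base change} shows their fiberwise restrictions recover precisely the mutated collections classifying $\cA_0$. For $n=1$ this is immediate since every exceptional object on $\PP^1$ is a shift of a line bundle $\mc{O}(i)$ which extends globally. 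For $n=2$ one argues that any exceptional collection on $\PP^2$ is, up to the action of $\Aut$ and twist, a mutation of a subcollection of $\langle\mc{O},\mc{O}(1),\mc{O}(2)\rangle$, and each such mutation is the fiberwise restriction of the corresponding $\pi$-mutation on $X$, so the globalizing $\pi$-exceptional collection exists.

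Combining injectivity from Theorem \ref{theorem:fibrations} with this matching of the two classifications yields that the $S$-linear admissible subcategories are in bijection with the admissible subcategories of the fiber, realized concretely by the formulas in (i) and (ii), which completes the proof.
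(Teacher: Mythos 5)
Your overall architecture is exactly that of the paper: use the injectivity of the restriction map from Theorem \ref{theorem:fibrations}, classify the admissible subcategories of the fiber, and then show the restriction map is surjective by lifting each fiberwise category to an $S$-linear admissible subcategory of $D^b(X)$, with Lemma \ref{lm:base change} guaranteeing that $\pi$-mutations of $\mc{O}_X,\mc{O}_X(1),\ldots,\mc{O}_X(n)$ restrict to the corresponding mutations on the fiber. For $n=1$ your argument is complete, since on $\PP^1$ every object decomposes into shifts of line bundles and the classification of admissible subcategories is elementary.

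However, there is a genuine gap in your step (a) for $n=2$. You assert that ``since $D^b(\PP^1)$ and $D^b(\PP^2)$ are generated by strong full exceptional collections, every admissible subcategory is generated by an exceptional collection,'' attributing this to Bondal--Orlov/Beilinson. No such implication exists: the presence of a full exceptional collection does not rule out admissible subcategories that contain no exceptional objects at all (this is precisely the question of phantom-type subcategories), and nothing in Beilinson's resolution or in general mutation theory excludes them. Ruling this out on $\PP^2$ is a hard, recent theorem of Pirozhkov \cite[Theorem 4.2]{Pi20a}, which the paper cites at exactly this point: \emph{every} admissible subcategory of $D^b(\PP^2)$ is generated by a subcollection of successive mutations of the standard collection $\mc{O},\mc{O}(1),\mc{O}(2)$ --- on the nose, not merely up to autoequivalence or twist. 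This on-the-nose form also repairs the hedge in your surjectivity step (``up to the action of $\Aut$ and twist''): if the fiberwise classification held only up to automorphism, your globalization would not immediately show that the specific restricted category $\cA_0$ is hit, whereas with Pirozhkov's statement each admissible $\cA_0\subset D^b(\PP^2)$ is literally generated by a mutated subcollection of the standard one, which you then lift via Lemma \ref{lm:base change} exactly as you describe. With the citation of \cite[Theorem 4.2]{Pi20a} substituted for the faulty justification, your proof coincides with the paper's.
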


\begin{proof}
(i) Any non-trivial admissible subcategory in $D^b(\mathbb{P}^1)$ is known to be of the form 
$\Span{\mc{O}_{\mathbb{P}^1}(i)}$ for some $i\in \Z$.
Since the restriction of the admissible category 
$D^b(S)(i)$ to a fiber is $\Span{\mc{O}_{\mathbb{P}^1}(i)}$, 
the injective base change map in  Theorem \ref{theorem:fibrations} is surjective.
Hence the result follows.

(ii)
\cite[Theorem 4.2]{Pi20a} states that any non-trivial admissible subcategory $\mathcal{A}$ in $D^b(\mathbb{P}^2)$ is generated by a subcollection of successive 
mutations of the standard exceptional collection 
$\mc{O}_{\mathbb{P}^2}, \mc{O}_{\mathbb{P}^2}(1), \mc{O}_{\mathbb{P}^2}(2).$ 
Lemma \ref{lm:base change} yields an $S$-linear admissible subcategory $\mathcal{A}_X$ of $D^b(X)$, which  
is generated by a $\pi$-exceptional subcollection obtained by successive 
$\pi$-mutations of the $\pi$-exceptional collection 
$\mc{O}_X, \mc{O}_X(1), \mc{O}_X(2)$, and its derived restriction on a fiber is 
$\mathcal{A}$.  This means that
 the injective base change map in Theorem \ref{theorem:fibrations} is surjective,
hence we  obtain the result.  
\end{proof}

The Popa--Schnell conjecture in \cite{PS11} states that for any Fourier--Mukai partners $X'$ of a given 
smooth projective variety $X$, there exists an equivalence 
$D^b(\Alb (X'))\cong D^b(\Alb (X))$ of derived categories. 

From Proposition \ref{prop:adm_on_proj},  we deduce that the Popa--Schnell conjecture
holds true in certain situations. 


\begin{corollary}
\label{corollary:base-elliptic}
Let $X \to A$ and $X' \to A'$ be $\mathbb{P}^n$-bundles over abelian varieties $A$ and $A'$ for $n=1,2$. If $X$ and $X'$ are Fourier--Mukai partners, then so are $A$ and $A'$. Furthermore, the Popa--Schnell conjecture holds true in this case.
\end{corollary}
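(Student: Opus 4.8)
The plan is to exploit the structure theorem for $S$-linear admissible subcategories established in Proposition~\ref{prop:adm_on_proj}. First I would set up the key input: given a derived equivalence $\Phi\colon D^b(X)\xrightarrow{\sim} D^b(X')$, I want to produce from it a derived equivalence between the base abelian varieties $A$ and $A'$. The natural strategy is to recover $D^b(A)$ (respectively $D^b(A')$) \emph{intrinsically} from $D^b(X)$ as a suitable admissible subcategory, in such a way that any derived equivalence must carry one such subcategory to another of the same shape on the target side. Concretely, by Proposition~\ref{prop:adm_on_proj}, every $A$-linear admissible subcategory of $D^b(X)$ is equivalent to $D^b(A)$ (the $n=1$ case gives $D^b(A)(i)\cong D^b(A)$ via twisting; the $n=2$ case with $l=1$ gives $\pi^*D^b(A)\otimes\cE_1\cong D^b(A)$). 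So each block in a semiorthogonal decomposition of $D^b(X)$ arising from the projective bundle structure is itself equivalent to $D^b(A)$.

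The central step is to show that the $A$-linear structure, or at least the property of being equivalent to $D^b(A)$, is transported by $\Phi$ in a controlled way. Here I would invoke the NSSI notion (Definition~\ref{definition:NSSI}): abelian varieties are NSSI, so that admissible subcategories behave rigidly under the base action. The $\PP^n$-bundle $D^b(X)$ carries the standard relative exceptional collection $\cO_X,\cO_X(1),\dots,\cO_X(n)$, yielding a semiorthogonal decomposition $D^b(X)=\langle D^b(A),\dots,D^b(A)\rangle$ with $n+1$ copies, and similarly $D^b(X')=\langle D^b(A'),\dots,D^b(A')\rangle$ with $n+1$ copies. Applying $\Phi$ transports the first decomposition into a semiorthogonal decomposition of $D^b(X')$ whose components are each equivalent to $D^b(A)$. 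The NSSI property of $A'$, combined with Theorem~\ref{theorem:fibrations} and Proposition~\ref{prop:adm_on_proj}, should force any admissible component of $D^b(X')$ to again be one of the standard $A'$-linear pieces, hence equivalent to $D^b(A')$. Matching a single component on each side then yields $D^b(A)\cong D^b(A')$.

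From $D^b(A)\cong D^b(A')$ the Popa--Schnell conclusion is essentially immediate: for an abelian variety, the Albanese variety is (up to duality) the variety itself, and for the total space $X$ of a $\PP^n$-bundle over $A$ one has $\Alb(X)\cong A$ since the projective bundle fibers are simply connected and contribute nothing to the Albanese. Thus $\Alb(X)\cong A$ and $\Alb(X')\cong A'$, and the derived equivalence $D^b(A)\cong D^b(A')$ is exactly the equivalence $D^b(\Alb(X))\cong D^b(\Alb(X'))$ demanded by the conjecture. I would record the identification $\Alb(X)\cong A$ as a short lemma, citing that a $\PP^n$-bundle has the same Albanese as its base.

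The main obstacle I anticipate is the rigidity step: ensuring that $\Phi$ really sends a \emph{standard} $A$-linear admissible subcategory of $D^b(X)$ to a \emph{standard} $A'$-linear admissible subcategory of $D^b(X')$, rather than to some exotic admissible subcategory that merely happens to be abstractly equivalent to $D^b(A)$. This is precisely where the NSSI hypothesis and Proposition~\ref{prop:adm_on_proj} must be combined carefully: one needs that every admissible subcategory of $D^b(X')$ which is equivalent to the derived category of an abelian variety is necessarily $A'$-linear and hence on the classified list. The delicate point is that the image category $\Phi(D^b(A))$ carries an $A$-linear action (pushed forward through $\Phi$), and I must argue that this action is compatible with the $\mathrm{Perf}(A')$-action up to the NSSI closure property, so that Proposition~\ref{prop:adm_on_proj} applies. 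Verifying this compatibility — that the abstract $A$-action matches the geometric $A'$-structure — is the crux and is where I would expect to spend the most effort.
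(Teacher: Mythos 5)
Your proposal follows essentially the same route as the paper's proof: the NSSI property of abelian varieties (\cite[Theorem 1.4]{Pi20b}) combined with the classification in Proposition \ref{prop:adm_on_proj}, followed by the identification $\Alb(X)\cong A$, $\Alb(X')\cong A'$ for the Popa--Schnell statement. However, the ``crux'' you anticipate spending the most effort on is in fact a non-issue, and recognizing this is the whole point of invoking NSSI. You worry that the transported $A$-action on $\Phi(D^b(A))\subset D^b(X')$ must be shown compatible with the geometric $\mathrm{Perf}(A')$-action. But Definition \ref{definition:NSSI} says precisely that, since $A'$ is NSSI, \emph{every} $k$-linear admissible subcategory of the $A'$-linear category $D^b(X')$ is automatically closed under the $\mathrm{Perf}(A')$-action, i.e.\ is $A'$-linear --- no compatibility with any other action need be checked; one simply forgets the transported $A$-action and uses admissibility of the image alone. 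This is exactly how the paper argues: NSSI makes any admissible subcategory $A'$-linear, whence it lands on the classified list of Proposition \ref{prop:adm_on_proj}. The one point your sketch leaves genuinely implicit is indecomposability: in the $n=2$ case the classified blocks are $\pi'^*D^b(A')\otimes\Span{\mc{E}_1,\ldots,\mc{E}_l}$ with $1\le l\le 3$, and to conclude $\Phi(D^b(A))\simeq D^b(A')$ you need $l=1$; this follows because $D^b(A)$ admits no nontrivial semiorthogonal decomposition (as $\omega_A\cong\mc{O}_A$), which is why the paper phrases the key step as ``any \emph{indecomposable} admissible subcategory of $D^b(X)$ is equivalent to $D^b(A)$.'' With these two adjustments --- dropping the compatibility worry and adding the indecomposability remark --- your argument coincides with the paper's.
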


\begin{proof}
Put $ D^b(A)(i)=\pi^* D^b(A)\otimes \cO_{X}(i)$, where $\pi$ is the $\mathbb{P}^1$-bundle $X \to A$.
Since abelian varieties are NSSI by \cite[Theorem 1.4]{Pi20b}, 
any admissible category of $D^b(X)$ is $A$-linear.
Proposition \ref{prop:adm_on_proj} implies that any non-zero indecomposable admissible subcategory of $D^b(X)$ is equivalent to $D^b(A)$.  
This completes the proof of the first assertion. We  see that $A\cong \Alb(X)$ and $A'\cong \Alb (X')$, and hence obtain the second. 
\end{proof}

If $X$ is an elliptic ruled surface over $\bbC$, namely  $n=1$ and $k=\bbC$, in Corollary \ref{corollary:base-elliptic}, the statement follows from \cite[Theorem 1.1]{Ue17}. The proof given above for $n=1,2$ and arbitrary $k$  
is more direct and natural.  

\begin{remark}
Let  $X \to E$ and $X' \to E'$ be $\mathbb{P}^n$-bundles over elliptic curves $E$ and $E'$ for $n=1, 2$. 
As a consequence of  Corollary \ref{corollary:base-elliptic},
if $X$ and $X'$  are Fourier--Mukai partners, then  
$D^b(E)\cong D^b(E')$, and hence $E\cong E'$ by \cite[Corollary 5.46]{Hu06}.
\end{remark}

\section{Fourier--Mukai partners of elliptic ruled surfaces}\label{sec:FM}

\subsection{Singular fibers of elliptic ruled surfaces}\label{subsec:FM_ruled}
In this subsection, we recall a result in \cite{TU22}. 
Let $\mc E$ be a normalized, in the sense of \cite[Ch.~5.~\S 2]{Ha77}, rank $2$ vector bundle 
on an elliptic curve $E$ and 
$$
f \colon S= \mathbb{P} (\mathcal{E} ) \rightarrow E
$$
be a $\PP ^1$-bundle on $E$ defined by $\mc E$. 
Let us put $e := -\deg \mathcal{E}$. 
If $S$ has an elliptic fibration, then $-K_S$ is nef. Then we can easily deduce  $e=0$ or $-1$ from \cite[Corollary V.2.11, Theorems V.2.12, V.2.15]{Ha77}).

\begin{theorem}[Theorem 1.1 in \cite{TU22}]\label{TUmain}
Let us consider the above situation. 
\begin{enumerate}
\item
For $e = 0$, we have the following possibilities:
\begin{center}
\begin{tabular}{|c|c|c|c|}
\hline
       & $\mathcal{E}$ & $\exists$ an elliptic fibration on $S$?  & $p$ \\ \hline \hline
(i-1)& $\mc{O}_E \oplus \mc{O}_E$ &no multiple fibers & $p\ge 0$ \\ \hline
(i-2)& $\mc{O}_E \oplus \mathcal{L}$,  $\ord \mc L=m>1$ &$(m,m)$  & $p\ge 0$ \\ \hline
(i-3)& $\mc{O}_E \oplus \mathcal{L}$,  $\ord \mc L=\infty$ & no elliptic fibrations &$p\ge 0$\\ \hline
(i-4)& indecomposable & no elliptic fibrations & $p=0$ \\ \hline
(i-5)& indecomposable & $({p}^*)$ & $p>0$ \\ \hline
\end{tabular} 
\end{center}

Here $\mc L$ is an element of $\Pic ^0E$.
In the case $S$ has an elliptic fibration $\pi$,  for example, the notation $(m,m)$ in (i-2) means that $\pi$ has exactly two multiple fibers of multiplicities $m$. 
\item
Suppose that $e=-1$. Then the isomorphism class of such vector bundle $\mc{E}$ on $E$ is unique, and $S$ has an elliptic fibration. The list of singular fibers are as follows: 
\begin{center}
\begin{tabular}{|c|c|c|c|}
\hline
       & multiple fibers  &E                                   & $p$ \\ \hline \hline
(ii-1)&$(2,2,2)$     &                                      & $p\ne 2$ \\ \hline
(ii-2)&$(2^*)$   &  \mbox{supersingular}      & $p= 2$\\ \hline
(ii-3)&$(2, 2^*)$ & \mbox{ordinary}         & $p= 2$\\ \hline
\end{tabular} 
\end{center}
\end{enumerate}
The symbol ${}^*$ stands for a wild fiber in the tables. 
\end{theorem}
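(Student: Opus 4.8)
The plan is to reduce the classification to a study of genus-one pencils on the ruled surface $S=\PP(\mc E)$, organized by the invariant $e=-\deg\mc E$. I would begin with a numerical analysis. Writing $K_S\equiv -2C_0-ef$ in terms of a minimal section $C_0$ (with $C_0^2=-e$, and note $C_0\cong E$ by adjunction) and a ruling fibre $f$, any fibre class $F=aC_0+bf$ of an elliptic fibration must satisfy $F^2=0$ and $F\cdot K_S=0$; these force $b=ae/2$, after which $F^2=0$ holds automatically, and $a=F\cdot f>0$ is the degree of $F\to E$. Since a fibre class is nef while $C_0$ (a genus-one curve of negative self-intersection) cannot lie in a fibre, one needs $F\cdot C_0=-ae/2\ge 0$, ruling out $e\ge 1$. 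Combined with the classical bound $e\ge -1$ for normalized rank-two bundles on an elliptic curve (see \cite{Ha77}), this leaves exactly $e=0$ (primitive class $F\equiv C_0$, so $F\cong E$) and $e=-1$ (primitive class $F\equiv 2C_0-f$, an \'etale double cover of $E$), which is the asserted dichotomy.

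Existence of the fibration and the multiplicities of its fibres I would then read off from the jump behaviour of the section spaces $H^0(S,\mc O_S(nF))$, which the projection formula expresses through symmetric powers of $\mc E$, by locating the least $n$ with $h^0\ge 2$ (a pencil). In the decomposable case $e=0$, $\mc E=\mc O_E\oplus\mc L$, one has $\Sym^n\mc E=\bigoplus_{j=0}^{n}\mc L^{j}$, so $h^0$ counts the $j\in[0,n]$ with $\mc L^{j}\cong\mc O_E$: the first jump is at $n=m:=\ord\mc L$, yielding the pencil $|mC_0|$ whose two special members $mC_0$ and $mC_\infty$ are the multiple fibres. This gives (i-1) ($m=1$, product, no multiple fibres), (i-2) (two multiple fibres of multiplicity $m$), and (i-3) (no jump, hence no elliptic fibration, when $\ord\mc L=\infty$). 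For the indecomposable $e=0$ case $\mc E=F_2$ (Atiyah's bundle, the non-split self-extension of $\mc O_E$) I would use the structure of $\Sym^nF_2$: in characteristic $0$ it remains indecomposable with $h^0=1$, so no pencil exists (i-4); in characteristic $p$ the Frobenius forces a jump at $n=p$, and since $F_2$ has a unique sub-line-bundle of degree $0$ (in contrast to the two of the decomposable case) the resulting fibration carries a single, wild multiple fibre of multiplicity $p$ (i-5).

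For $e=-1$ the generic fibre is an \'etale double cover of $E$, so here the relevant combinatorics is that of degree-two covers, governed by $E[2]$. When $p\ne 2$ the three nonzero classes of $E[2]\cong(\Z/2)^{2}$ produce three tame multiple fibres of multiplicity two, namely $(2,2,2)$ (ii-1); when $p=2$ the degeneration of the $2$-torsion separates the supersingular case $E[2]=\{O\}$, giving one wild fibre (ii-2), from the ordinary case $E[2]=\Z/2$, giving the mixed configuration (ii-3). Throughout, I would distinguish wild from tame fibres and compute the invariants $a_i$ recorded in the tables by means of the positive-characteristic theory of multiple fibres (Bombieri--Mumford, Katsura--Ueno), reading the contributions off the canonical bundle formula and $\pi_*\omega_{S/\PP^1}$.

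The main obstacle I expect to be the positive-characteristic cases (i-5), (ii-2) and (ii-3): pinning down the Frobenius behaviour of $\Sym^nF_2$ so as to locate the jump and verify that exactly one wild fibre appears, and classifying the wild multiple fibres in characteristic $2$ together with their canonical coefficients. The characteristic-zero and tame parts reduce to the clean cohomological count above, but the wild cases lack the \'etale-covering description that trivializes $\mc E$ (as available over $\bbC$), and so require the finer local structure theory of multiple fibres in positive characteristic.
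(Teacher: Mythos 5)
First, a structural point: the paper does not prove this statement at all. Theorem \ref{TUmain} is imported verbatim from \cite{TU22} (Theorem 1.1 there) and used as a black box, so there is no internal proof to compare your proposal against; the only meaningful comparison is with the cited paper. Your skeleton is the natural one and, in outline, the standard route: intersection theory on $\PP(\mc E)$ (with $F=aC_0+bf$, $F\cdot K_S=0$ forcing $b=ae/2$, nefness of $F$ against $C_0$ killing $e\ge 1$, and the classical bound $e\ge -1$) correctly isolates $e\in\{0,-1\}$ with primitive fibre classes $C_0$ and $2C_0-f$, and the section count $h^0(S,\mc O_S(nC_0))=h^0(E,\Sym^n\mc E)$ correctly produces (i-1), (i-2) and the jump at $n=m=\ord\mc L$.

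There are, however, two genuine gaps. The first is fixable but real: a fibre class of an elliptic fibration is determined only \emph{numerically}, so the fibration's line bundle may be $\mc O_S(aC_0)\otimes f^*\mc N$ for any $\mc N\in\Pic^0E$. Your criterion ``least $n$ with $h^0\ge 2$'' therefore proves existence but not the non-existence claims (i-3) and (i-4): you must show $h^0(\Sym^n\mc E\otimes\mc N)\le 1$ for \emph{every} twist $\mc N$ (for $\mc E=\mc O_E\oplus\mc L$ two sections force $\mc L^{j_2-j_1}\cong\mc O_E$, so $\ord\mc L=\infty$ still kills all pencils, but this has to be said; similarly for the Atiyah bundle in characteristic $0$). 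One should also rule out genus-one pencils with non-smooth general member (quasi-elliptic); here this is immediate since every rational curve on $S$ is a ruling fibre and $f\cdot K_S=-2\ne 0$, but it is unaddressed. The second gap is the substantive one: the positive-characteristic entries, which are the actual content of the theorem, are deferred rather than proved. ``Frobenius forces a jump at $n=p$'' needs the exact sequence $0\to\Fr^*F_2\to\Sym^pF_2\to\Sym^{p-2}F_2\to 0$ together with a Hasse--Witt analysis (for supersingular $E$ one gets $\Fr^*F_2\cong\mc O_E^{\oplus 2}$ and the jump is immediate; for ordinary $E$ one gets $\Fr^*F_2\cong F_2$ and the jump requires a further argument on the connecting map). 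Nothing in your sketch produces the wild invariant $a=p-2$ in (i-5), establishes wildness itself, or decides between $(1/2^*)$ and $(2,0/2^*)$ in characteristic $2$ beyond the $E[2]$ head-count --- and that head-count needs an actual mechanism, namely the decomposition $\mc E\otimes\mc E\cong\bigoplus_{\tau\in E[2]}(\det\mc E\otimes\tau)$ for $p\ne 2$ and its inseparable degeneration when $p=2$. Since the values $(a_i/m_i)$ and tame-versus-wild are precisely what the tables assert (and what the rest of the present paper consumes, e.g.\ the $(m,m)$ configuration giving $\lambda_\pi=m$), invoking ``Bombieri--Mumford, Katsura--Ueno'' names the right toolbox but does not constitute a proof; carrying out those local computations is where \cite{TU22} does its work.
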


By \cite{BM01} and \cite{Ka02}, we know that if $S$ has non-trivial Fourier--Mukai partners, $S$ has an elliptic fibration. Hence, from now on, we suppose that $S$ has an elliptic fibration $\pi\colon S\to \PP^1$. Theorem \ref{TUmain} says that the multiplicities of all multiple fibers of $\pi$ are the same number  $m$. 

When $e=0$ (resp. $e=-1$), 
we see
\begin{equation}\label{eqn:FF}
F_\pi\cdot F_f=mC_0\cdot F_f=m \quad \mbox{
(resp. } F_\pi\cdot C_0=m(2C_0-F_f)\cdot C_0=m)
\end{equation}
by \cite[Remark 4.2]{TU22}, 
and hence 
\begin{equation}\label{eqn:pipi'}
\lambda_\pi=m=\lambda_{\pi}'
\end{equation}
for both cases (recall the definitions of $\lambda_\pi$ and $\lambda_\pi'$ in \eqref{eqn:pi} and \eqref{eqn:lambda'} respectively). Here $F_\pi$ (resp.~$F_f$) is a fiber of $\pi$ (resp.~$f$), and $C_0$ stands for a section of $f$ satisfying $C_0^2=-e$. 

Consider the case $|\FM(S)|\ne 1$. Then the inequality \eqref{eqn:FMeqn} yields $m=\lambda_{\pi}\ge 5$.
Hence, $S$ fits into either (i-2), $m\ge 5$ or (i-5), $p\ge 5$ in Theorem \ref{TUmain}.
Then $S'\in \FM(S)$ is also an elliptic ruled surface admitting an elliptic fibration $\pi'$ fitting into the same case as $S$ by Lemma \ref{lem:SJS}. 

\begin{lemma}\label{lem:i-5}
Suppose that $|\FM(S)|\ne  1$. Then $S$ fits into the case (i-2).
\end{lemma}

\begin{proof}
It suffices to show that $|\FM(S)|=1$ in the case (i-5). 
Suppose that $S$ fits into the case (i-5).
As we explained above, $S'\in \FM(S)$ is also an elliptic ruled surface in  the case (i-5). In other words,  $S'$ has a $\PP^1$-bundle structure $f'\colon \PP(\mc{E}')\to E'$, where
$\mc{E}'$ is the indecomposable vector bundle of rank $2$, degree $0$ on an elliptic curve $E'$. 
By Corollary \ref{corollary:base-elliptic}, we have $E\cong E'$. 
Then, we see $S\cong S'$ by \cite[Theorem V.2.15]{Ha77}, in other words, $|\FM(S)|=1$.
\end{proof}

The purpose of this paper is to describe the set $\FM(S)$ for elliptic ruled 
surfaces. Hence in the sequel, we will concentrate on the case (i-2),
the unique candidate of $S$ admitting non-trivial Fourier--Mukai partners.

\subsection{Case (i-2).}\label{subsec:i-2}
Take $\mc{L} \in \Pic^0 E$ with $1<m:=\ord \mc{L}<\infty$, and set 
$$S:=\PP(\cO_E \oplus \cL).$$

The following lemma is elementary and useful.
\begin{lemma}\label{lem:automor}
\begin{enumerate}
\item
There exists an isomorphism $S \cong \PP(\cO_E \oplus \cM)$ over $E$ if and only if $\cL\cong \cM^{\pm 1}$.  
\item
For $\phi_E\in \Aut (E)$, 
we have an isomorphism $f^*\phi_E$ in the fiber product diagram:
\begin{equation}\label{eqn:SE_fiber_product}
\xymatrix{
\PP(\cO_E \oplus \phi_E^*\mc L) \ar[d]\ar[r]^{\qquad f^*\phi_E} & S \ar[d]^f  \\
E \ar[r]_{\phi_E}& E\ar@{}[lu]|{\Box}}
\end{equation}
\item
For some $\cM\in\Pic^0 E$, let $f_T\colon T:=\PP(\cO_E \oplus \cM)\to E$ be the $\PP^1$-bundle over $E$.
Suppose that we are given an isomorphism $\phi\colon T \to S$.
Then,  
if we replace $\phi$ appropriately, we can take
$\phi_E\in \Aut_0(E)$,
which makes the diagram
\begin{equation}\label{eqn:SEE}
\xymatrix{
T \ar[d]_{f_T}\ar[r]^{\phi} & S \ar[d]^f \\
E \ar[r]_{\phi_E}& E
}
\end{equation}
commutative. Moreover we have an isomorphism 
\begin{equation}\label{eqn:TPP}
T\cong \PP(\cO_E \oplus \phi_E^*\cL)
\end{equation}
over $E$, and an isomorphism
\begin{equation}\label{eqn:MEL}
\cM\cong \phi_E^*\cL. 
\end{equation}
\end{enumerate}
\end{lemma}


\begin{proof}
(i) This fact directly follows from \cite[Exercise II.7.9(b)]{Ha77}.
\newline
\ \ \ (ii)
This assertion must be well-known. We leave the proof to readers.
(For example, use \cite[Proposition II.7.12]{Ha77}.) 
\newline
\ \ \ (iii) Since 
$S$ has a unique $\PP^1$-bundle structure, the existence of $\phi_E\in \Aut (E)$ fitting in \eqref{eqn:SEE} is assured. 
Next, write $\phi_E= T_a\circ \phi^0_E$ for some 
$\phi^0_E\in \Aut_0(E)$ and $a\in E$.  
Since $T_a^*\cL\cong \cL$, the isomorphism $f^*T_a$ (given as $f^*\phi_E$ in \eqref{eqn:SE_fiber_product}) gives an automorphism of $S$. 
Then, if necessary, replace $\phi$ with $(f^*T_a)^{-1}\circ \phi$, we may assume that $\phi_E\in \Aut_0(E)$.
By the universal property of the fiber product in \eqref{eqn:SE_fiber_product}, we obtain an isomorphism 
\eqref{eqn:TPP} over $E$.
 Then by (i) there exists an isomorphism $\cM^{\pm 1}\cong \phi_E^*\cL$. Since $(-\id_E)^*\cL\cong \cL^{-1}$,
 $f^*(-\id_E)$ also gives an automorphism of $S$. Thus, replace $\phi$ with 
  $f^*(-\id_E)\circ \phi$ if necessary,  we may assume that $\phi_E\in \Aut_0(E)$ and \eqref{eqn:MEL} holds simultaneously.
\end{proof}

\begin{lemma}\label{lem:projbdl}
For $i \in (\Z/m\Z)^*$, $S \cong \PP(\cO_E \oplus \cL^i)$ if and only if there exists an automorphism $\phi_E \in \Aut_0(E)$ such that $\phi_E^* \cL \cong \cL^{i}$. 
Consequently, the set
$$
\{ \PP ( \mc O_{E}\oplus \mc L^i)   \mid i \in (\Z/m\Z)^* \}/\cong
$$
is naturally identified with the group 
$$
(\Z/m\Z)^*/ H^{\cL}_{\hat{E}}.
$$
Here, recall that 
$H^{\cL}_{\hat{E}} := \{ i \in (\Z/m\Z)^* \, | \, \exists \phi \in \Aut_0(E) \, \text{such that} \, \phi^*\cL \cong \cL^{i}\}$.
\end{lemma}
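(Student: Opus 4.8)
The plan is to prove the two assertions in Lemma~\ref{lem:projbdl} in sequence, starting from the classical fact about when two projectivized bundles are isomorphic. First I would recall \cite[Exercise II.7.9(b)]{Ha77}: for vector bundles $\mc E, \mc E'$ on a variety, $\PP(\mc E)\cong \PP(\mc E')$ as $E$-schemes if and only if $\mc E'\cong \mc E\otimes \cN$ for some line bundle $\cN$. However, the isomorphism we seek is only required to be an abstract isomorphism of surfaces, not one over $E$. Since $E$ has no rational curves, the $\PP^1$-fibration structure $f\colon S\to E$ is canonical (the fibers are the only rational curves), so any abstract isomorphism $S\to \PP(\cO_E\oplus \cL^i)$ must descend to an automorphism of the base $E$. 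This is the conceptual point I would make explicit: an isomorphism $S\cong \PP(\cO_E\oplus\cL^i)$ is the same as a pair $(\phi, \tilde\phi)$ where $\phi\in\Aut(E)$ and $\tilde\phi$ covers $\phi$.

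Next I would reduce $\phi\in\Aut(E)$ to $\phi\in\Aut_0(E)$. An arbitrary automorphism of $E$ is a translation composed with an origin-fixing automorphism, and translations act trivially on $\Pic^0 E$ (pullback by $T_a$ fixes degree-zero line bundles up to the identity on $\hat E$); so only the $\Aut_0(E)$-part affects the pullback of $\cL$. Concretely, $S\cong \PP(\cO_E\oplus\cL^i)$ holds iff there is $\phi\in\Aut_0(E)$ with $\phi^*(\cO_E\oplus\cL^i)\cong (\cO_E\oplus\cL)\otimes\cN$ for some $\cN\in\Pic E$. Comparing the two summands and using that $\cL$ has degree $0$, this forces $\cN\in\{\cO_E,\phi^*\cL^i\}$ and collapses to the condition $\phi^*\cL\cong \cL^i$ (writing the group law on $\hat E$ multiplicatively). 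Here I would use the identification \eqref{eqn:dual} of $E$ with $\hat E$ so that $\phi^*$ acts on $\cL$ through its action on $E$, and the hypothesis $i\in(\Z/m\Z)^*$ guarantees $\cL^i$ again has order $m$, so the comparison of summands is forced rather than accidental.

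For the second, group-theoretic assertion, I would set $F=\hat E$ and $a=\cL$, viewed via \eqref{eqn:dual} as a point of order $m$ in $\hat E$. The subgroup $H^{\cL}_{\hat E}=\{i\in(\Z/m\Z)^*\mid \exists\,\phi\in\Aut_0(\hat E),\ \phi(\cL)=i\cdot\cL\}$ is exactly the set of exponents $i$ realized by pullback automorphisms, since $\phi^*$ on $\hat E$ corresponds to an element of $\Aut_0(\hat E)$ and $\phi^*\cL\cong\cL^i$ translates into $\phi(\cL)=i\cdot\cL$ in additive notation. Thus the first part says $\PP(\cO_E\oplus\cL^i)\cong\PP(\cO_E\oplus\cL^j)$ iff $ij^{-1}\in H^{\cL}_{\hat E}$, which is precisely the statement that the set of isomorphism classes is the orbit space, i.e. the quotient group $(\Z/m\Z)^*/H^{\cL}_{\hat E}$. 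I would note that $H^{\cL}_{\hat E}$ is a genuine subgroup (closure under multiplication follows from composing automorphisms) and invoke Lemma~\ref{lem:injection} to see it is finite of order $2,4$ or $6$, so the quotient is well-defined.

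The main obstacle I anticipate is the very first reduction: justifying that an abstract isomorphism of the two surfaces is induced by one compatible with the bundle projections, and then correctly bookkeeping the twist by $\cN$ together with the translation part of $\Aut(E)$. The cleanest route is to argue that $f\colon S\to E$ is intrinsic (its fibers are the unique rational curves, and $E\cong\Alb(S)$ with $f$ the Albanese map), so that any isomorphism automatically respects the fibration and induces $\phi\in\Aut(E)$; after that the translation part drops out because it acts trivially on $\Pic^0 E$, and the twist $\cN$ is pinned down by matching the two normalized summands. Once this structural point is in place the rest is the routine identification of the quotient group, so I would keep that part brief.
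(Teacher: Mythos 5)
Your proposal is correct and is essentially the paper's proof: the paper disposes of this lemma with the single citation of \cite[Exercise II.7.9(b)]{Ha77}, and you supply precisely the details that citation compresses (the ruling $f$ is intrinsic because rational curves on $S$ are contracted by the Albanese map, translations pull back trivially on $\Pic^0 E$, and Krull--Schmidt comparison of the two summands pins down the twist $\cN$). The only slip is cosmetic: matching summands a priori gives $\phi^*\cL^i\cong\cL^{\pm 1}$, hence $\phi^*\cL\cong\cL^{\pm i^{-1}}$ (using that $\phi^*$ preserves the order $m$, so $\Span{\phi^*\cL}=\Span{\cL}$), and one reaches the stated condition $\phi^*\cL\cong\cL^{i}$ by replacing $\phi$ with $\phi^{-1}$ and, if needed, composing with $-\id_E$ --- legitimate since, as you note, $H^{\cL}_{\hat{E}}$ is a subgroup of $(\Z/m\Z)^*$ containing $-1$.
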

\begin{proof}
``If'' part follows from Lemma \ref{lem:automor} (ii). 
 ``Only if'' part follows from  Lemma \ref{lem:automor}  (iii).
\end{proof}

Consider the dual morphism 
\begin{equation}\label{eqn:q1}
q_{1}\colon F_0:=\widehat{\hat{E}/\Span{\mc{L}}}\to E
\end{equation}
of the quotient morphism $\hat{E}\to \hat{E}/\Span{\mc{L}}$. Then it follows from the definition of $q_1$ that 
$q_1^*\cL\cong \mc{O}_{F_0}$ holds. 
Thus we have a diagram 
\begin{equation}\label{eqn:pull_back}
\xymatrix{
F_0 \ar[d]_{q_{1}} & F_0\times \PP^1 \ar[l]_{p_1} \ar[r]^{p_2} \ar[d]^{q_S} & \PP^1 \ar[d]^{q_{2}} \\
E & S \ar[r]_{\pi} \ar[l]^{f} \ar@{}[lu]|{\Box} & \PP^1,\\
}
\end{equation}
where the left square diagram is a fiber product, and the right one is obtained by the Stein factorization of $\pi\circ q_S$. The reason why $\pi\circ q_S$ factors through $p_2$ is as follows.
First, we have $q_S^*\omega_S\cong \omega_{F_0\times \PP^1}$ by \cite[Lemma 2.14]{TU22}. On the other hand, the elliptic fibration $p_2$ (resp.~$\pi$) are defined by the linear system of some multiple of $-K_{F_0\times \PP^1}$ (resp.~$-K_S$). Therefore $\pi\circ q_S$ factors through $p_2$. 

Recall that the elliptic fibration $\pi$ has exactly two multiple fibers.
\vskip\baselineskip

\noindent
\textbf{Convention.}
By the action of $\PGL (1,k)$ on $\PP^1$, we always assume below that in the case (i-2), the elliptic fibration $\pi$ has multiple fibers over the points 
 $0$ and $\infty$ in $\PP^1$. Furthermore, we also assume that $q_2(0)=0$ and $q_2(\infty)=\infty$. 
 
\vskip\baselineskip
\noindent
For $y_0\in \PP^1$ with $y:=q_{2}(y_0)\in \PP^1\backslash \{0,\infty\}$, we denote by $F_y$  the non-multiple fiber  of  $\pi$ over the point $y$.  Then it follows from $f\circ q_S=q_1\circ p_1$ that the restriction of $q_S$ induces the isomorphism 
\begin{equation}\label{eqn:hi}
q_S|_{F_0\times y_0}\colon F_0\times y_0 \cong F_y,
\end{equation}
since we see from \eqref{eqn:FF} that $f|_{F_y}$ is finite morphism of  degree $m$.
We tacitly identify $F_0$ and $F_y$ by this isomorphism.  

Take $x_0\in F_0$ and set $x:=q_1(x_0)\in E$. Then 
in a similar way to \eqref{eqn:hi}, we have an isomorphism
\begin{equation}\label{eqn:PP}
q_S|_{x_0\times \PP^1}\colon x_0\times \PP^1\cong F_x,
\end{equation}
where $F_x$ is the fiber of $f$ over the point $x$.
We identify $\PP^1$ and $F_x$ by \eqref{eqn:PP}.  
By our convention above, we see that the two multiple fibers of $\pi$ intersect with each fiber $\PP^1$  of $f$ at $0$ and $\infty$
respectively. 

Recall that $f$ has two minimal sections, let's say $C_0$ and $C_1$, corresponding to the projections 
\begin{equation}\label{eqn:OLO}
\mc{O}_E\oplus \mc{L}\to \mc{O}_E \quad\mbox{ and }\quad  \mc{O}_E\oplus \mc{L}\to \mc{L}.
\end{equation}
Then the multiple fibers of $\pi$ are given exactly $mC_0$ and $mC_1$ (see \cite[Remark 4.2]{TU22}). 

We use the following lemma to show Claim \ref{cla:action}.
\begin{lemma}\label{lem:autoPP}
Let us regard the multiplicative group $\G_m$ as a subgroup of $\Aut(\cO_E \oplus \cL)(\cong \G_m\times \G_m)$ by the diagonal embedding. Then 
there exists an injective homomorphism 
\begin{equation*}\label{eqn:injSE}
\iota\colon \G_m\cong \Aut(\cO_E \oplus \cL)/\G_m\hookrightarrow \Aut (S/E).
\end{equation*}
Here, for $\lambda\in \G_m$, the automorphism $\iota(\lambda)$ of $S$ induces the action on each fiber $\PP^1$ of $f$ fixing the points $0$ and $\infty$.
\end{lemma}
\begin{proof}
The existence of the injection $\iota$ is assured in \cite[p.202]{MR1603467}.\footnote{See also \cite[Lemma 3]{MR280493}). Because $\Delta$ in ibid. is trivial, we actually see that $\iota$ gives an isomorphism.}
Note that since any elements of $\Aut(\cO_E \oplus \cL)$ preserve the projections in \eqref{eqn:OLO}, any $\beta\in \Im\iota$ 
preserves the minimal sections $C_0$ and $C_1$, and hence it gives an automorphism on each fiber $\PP^1$ of $f$  fixing the points $0$ and $\infty$.
\end{proof}

\subsection{Proof of Theorem \ref{thm:main}.}
Let $S$ be an elliptic ruled surface and suppose $|\FM(S)| \neq 1$. Lemma \ref{lem:i-5} implies that 
$$S \cong \PP(\mc{O}_{E} \oplus \mc{L})$$
for some $\mc{L} \in \Pic^0 E$ with $\ord \mc{L}= m \ge 5$. 
Now if $S' \in \FM(S)$, by the same reason we get $S' \cong \PP(\mc{O}_{E'} \oplus \mc{L}')$ for some 
$\mc{L}'\in \Pic^0 E'$ with 
$$m= \lambda_{\pi} = \ord \mc{L}=\ord \mc{L}' .
$$ Moreover, by Corollary \ref{corollary:base-elliptic}, we see that $E \cong E'$. 

We divide the proof of Theorem \ref{thm:main} into two cases:  The case $m=p^e\ge 5$ for some $e>0$, and the case arbitrary $m\ge 5$ with $m\neq p^e$.
In both cases, first we define an injective map
\begin{align}\label{eqn:inj}
\{J^i(S) \mid i \in (\Z/m\Z)^* \} / \cong 
 \hookrightarrow \{ \PP ( \mc O_{E}\oplus \mc L^i)   \mid i \in (\Z/m\Z)^* \}/\cong,
\end{align}
and secondly we shall see
\begin{equation}\label{eqn:Hpi_HE}
\lvert H_\pi \rvert \le \lvert H^{\cL}_{\hat{E}} \rvert.
\end{equation} 
The cardinality of the L.H.S in  \eqref{eqn:inj} is $\varphi(m)/\lvert H_\pi \rvert$ 
by Lemma \ref{lem:FM_group}, and
the cardinality of the R.H.S. in \eqref{eqn:inj}  is $\varphi(m)/\lvert H^{\cL}_{\hat{E}} \rvert$ by Lemma \ref{lem:projbdl}. Therefore, 
combining \eqref{eqn:inj} with \eqref{eqn:Hpi_HE}, we can conclude 
that \eqref{eqn:inj} is a bijection, and hence 
Theorem \ref{BMelliptic} yields
$$
\FM(S) =\{ \PP ( \mc O_{E}\oplus \mc L^i)   \mid i \in (\Z/m\Z)^* \}/\cong
$$ 
as required in  Theorem \ref{thm:main}.

\paragraph{Case:  $m=p^e\ge 5$ for some $e>0$.}
Theorem \ref{TUmain} implies that 
 $J^i(S)\cong \PP(\cO_E \oplus \cL_{i})$ for some $\cL_i\in \Pic^0{E}$ with $\ord\cL_i= p^e$. 
But in this case, $E$ is necessarily ordinary, and hence $\hat{E}[p^e]$ is a cyclic group generated by $\cL$.
So in this case, $\cL_i \cong \cL^{\beta(i)}$ for some $\beta(i) \in (\Z/m\Z)^*$, and thus we can define an injective map \eqref{eqn:inj} by $J^i(S) \mapsto \PP(\cO_E \oplus \cL^{\beta(i)})$. 

Denote by $F_0$ the elliptic curve satisfying $\hat{F_0} = \hat{E}/\Span{\mc{L}}$ as in \S \ref{subsec:i-2}. Then by \eqref{eqn:hi}, 
a general fiber of the elliptic fibration $\pi\colon S\to \PP^1$ is isomorphic to $F_0$.

\begin{claim}\label{claim:LHS}
The inequality \eqref{eqn:Hpi_HE} holds (if $m=p^e\ge 5$). 
\end{claim}

\begin{proof}
\cite[Propositions 5.3.3, 5.3.6]{MR986969} implies that $\kappa(J^0(S))=-\infty$. Combining this fact with \cite[Corollary 5.3.5]{MR986969}, we see that $J^0(S)$ is an elliptic ruled surface  with a section. Therefore, by the classification in Theorem \ref{TUmain} and  \cite[Theorem 5.3.1~(i)]{MR986969}, we have $J^0(S)\cong F_0\times \PP^1$. 
Then we have $\Br(J^0(S))=0$ by \cite[Proposition 2.1]{MR612710}. Moreover 
 we have 
$\lambda_\pi=p^e=\lambda_\pi'$ by \eqref{eqn:pipi'}, and hence the group
$H'_{\pi}$ in Lemma \ref{lem:H_1H_2} is trivial. Therefore Lemma \ref{lem:H_1H_2} yields 
$$\bigl| H_{\pi} \bigr|    \le    \bigl| \Aut_0(J^0_\eta)\bigr|.$$
Recall that   $H^{\cL}_{\hat{E}}=\Aut_0(E)$ by Lemma \ref{lem:injection} (ii) in the case $m=p^e\ge 5$. Hence,
to obtain the conclusion, it suffices to check that 
$\lvert\Aut_0(J^0_{\eta}) \rvert \le \lvert\Aut_0(E)\rvert$. 
Thus we may assume $2<|\Aut_0(J^0_{\eta})|$. 
Note that we have a surjective homomorphism
$$
\Aut_0(J^0(S)/\PP^1)\to \Aut_0(J^0_{\eta}),
$$ 
where $\Aut_0(J^0(S)/\PP^1)$ means the automorphism group of $J^0(S)(\cong F_0\times \PP^1)$ over $\PP^1$, fixing the $0$-section. 
Thus, we have an isomorphism
$\Aut_0(J^0(S)/\PP^1)\cong \Aut_0(F_0)$, and moreover obtain 
$$2< |\Aut_0(J^0_{\eta})| = |\Aut_0(J^0(S)/\PP^1)| = |\Aut_0(F_0)|.$$ 
This yields $j(F_0) = 0$ or $1728$. 
Since the morphism $q_1\colon F_0\to E$ obtained in \eqref{eqn:q1} is a composition of relative Frobenius morphisms (cf.~\cite[Theorem V.3.1]{Si09}),  
\cite[Exercise IV.4.20(a)]{Ha77} produces the isomorphism $E\cong F_0$, which completes the proof.
\end{proof}

Claim \ref{claim:LHS} completes the proof of Theorem \ref{thm:main}
in the case $m=p^e\ge 5$.

\paragraph{Case: Arbitrary $m\ge 5$ with $m\ne p^e$ for any $e>0$. }
We may put $m=np^e$ with $e\ge 0$, $n>1$,  $p \nmid n$. 
We generalize the method of \cite{Ue17} below.

Recall that $S\cong \PP(\mc{O}_E\oplus\mc{L})$, and 
define elliptic curves $F_0$ and $F$ as $\hat{F}_0:= \hat{E}/\Span{\mc{L}}$ and $\hat{F} := \hat{E}/\Span{\mc{L}^{p^e}}$. Denote by  
$$q_E\colon F\to E$$ the dual morphism of the quotient morphism
$
\hat{E} \to \hat{F} = \hat{E}/\Span{\mc{L}^{p^e}}.
$
Set $$\mc{M}:=q_E^*\mc{L} \ \text{ and } \
T:=\PP(\mc{O}_F\oplus\mc{M}).$$ 
Then  we see $\hat{F_0} = \hat{F}/\Span{\mc{M}}$ and  $\ord \mc{M}=p^e$. Moreover if $e>0$, the existence of a non-zero element $\mc{M}$ of $\hat{F}[p^e]$ implies that $F$ is ordinary, and the dual morphism of the quotient morphism
$$
\hat{F} \to \hat{F_0} = \hat{F}/\Span{\mc{M}}.
$$
is the $e$-th iteration of the relative Frobenius morphisms (cf.~\cite[Theorem V.3.1]{Si09}). 
Then we obtain the following commutative diagram:
\begin{equation}\label{eqn:F0FE}
\xymatrix{
F_0 \ar[d]_{\Fr^e} & F_0\times \PP^1 \ar[l]_{p_1} \ar[r]^{p_2} \ar[d]^{h_1} & \PP^1 \ar[d]^{\Fr_{\PP^1}^e} \\
F \ar[d]_{q_E}& T \ar[r]_{\pi_1} \ar[d]^{q} \ar[l]^{f_1} \ar@{}[lu]|{\Box} & \PP^1 \ar[d]^{q_{\PP^1}}\\
E & S \ar[r]_{\pi} \ar[l]^{f} \ar@{}[lu]|{\Box} & \PP^1\\
}
\end{equation}
Both of the left squares are fiber product diagrams, and 
the right squares are obtained by the Stein factorizations of
$\pi_1\circ h_1$ and $\pi\circ q$ respectively.
Moreover we have 
$$\deg q_E=\deg q=\deg q_{\PP^1}=n.$$

Take 
\begin{equation}\label{eqn:im}
i\in \Z\mbox{ with } 1\le i< m, \quad (i, m)=1.
\end{equation}
Note that this condition implies that $(i,p^e)=(i,n)=1$, and hence we sometimes regard $i\in (\Z/p^e\Z)^*$ or $i\in (\Z/n\Z)^*$ below.  

Recall that we have already proved Theorem \ref{thm:main} for line bundles whose order is $p$-th power. By applying it to $\cM$, we obtain
\begin{equation}\label{eqn:JTPP}
J^i(T)\cong \PP(\mc{O}_F\oplus \mc{M}^{\beta(i)})
\end{equation}
for some $\beta (i)\in (\Z/p^e\Z)^*$.
Moreover,
since $(\Fr^e)^*\mc{M}\cong \mc{O}_{F_0}$, we have a diagram
\begin{equation}\label{eqn:Fre}
\xymatrix{
F_0 \ar[d]_{\Fr^e} & F_0\times \PP^1 \ar[l]_{p_1} \ar[r]^{p_2} \ar[d]^{h_i} & \PP^1 \ar[d]^{\Fr_{\PP^1}^e} \\
F & J^i(T) \ar[r]_{\pi_i} \ar[l]^{f_i} \ar@{}[lu]|{\Box} & \PP^1\\
}
\end{equation}
as in \eqref{eqn:pull_back}. Here $f_i$ is a $\PP^1$-bundle defined by using the $\PP^1$-bundle structure on $\PP(\mc{O}_F\oplus \mc{M}^{\beta(i)})$ and the isomorphism \eqref{eqn:JTPP}.

Fix an $n$-th primitive root of unity $\zeta$. Consider the multiplication on $\mathbb{G}_m$ by $\zeta$, 
and extend it  to the automorphism of $\PP^1$. Denote it by $g_{\PP^1}$.
Because we see that $q_{\PP^1}$ in \eqref{eqn:F0FE} fixes points $0$ and $\infty$ in $\PP^1$,  it turns out that the morphism $q_{\PP^1}$ is the quotient morphism by the action  of the group 
$\Span{g_{\PP^1}}\cong \Z/n\Z$ on $\PP^1$.

Take $a\in F$ such that $E\cong F/\Span{a}$ and $\ord a(=\ord \cL^{p^e})=n$.  Then we can construct an action of the group $G:=\Z/n\Z$ on $J^i(T)$ as follows.

\begin{claim}\label{cla:action}
For each $s\in (\Z/n\Z)^*$ and $t\in  (\Z/p^e\Z)^*$, 
there exists an automorphism $g_{s}$ of $J^{t}(T)$ which induces the translation $T_{s\cdot a}$ of $F$ and the automorphism $g_{\PP^1}$ of $\PP^1$.
\end{claim}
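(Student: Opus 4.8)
The plan is to obtain $g_s$ as a suitable lift of the translation $T_{s\cdot a}$ to the $\PP^1$-bundle $f_t\colon J^t(T)=\PP(\mathcal{E}_t)\to F$, where $\mathcal{E}_t:=\mathcal{O}_F\oplus\mathcal{M}^{\beta(t)}$, and then to pin down its action on the base $\PP^1$ of $\pi_t$ by transporting it through the Frobenius cover $h_t$ of \eqref{eqn:Fre} (with $i=t$). If $e=0$ then $\mathcal{M}\cong\mathcal{O}_F$, so $T=J^t(T)=F\times\PP^1$ and the product automorphism $T_{s\cdot a}\times g_{\PP^1}$ does the job directly; I therefore assume $e\ge 1$, so that $\mathcal{M}^{\beta(t)}$ is a nontrivial degree-$0$ line bundle. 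Since translations act trivially on $\Pic^0 F$, we have $T_{s\cdot a}^*\mathcal{E}_t\cong\mathcal{E}_t$, and each such isomorphism induces an automorphism $\bar g$ of $J^t(T)$ covering $T_{s\cdot a}$. Because $\Hom(\mathcal{O}_F,\mathcal{M}^{\beta(t)})=\Hom(\mathcal{M}^{\beta(t)},\mathcal{O}_F)=0$, every such isomorphism is diagonal, of the form $\mathrm{diag}(c_0,\psi_1)$ with $c_0\in k^*$ and $\psi_1$ an isomorphism $T_{s\cdot a}^*\mathcal{M}^{\beta(t)}\xrightarrow{\sim}\mathcal{M}^{\beta(t)}$, the latter forming a $k^*$-torsor; modulo the overall scalar acting trivially on $\PP(\mathcal{E}_t)$, the lifts thus form a one-parameter family.

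Next I would compute the automorphism $\gamma\in\Aut(\PP^1)$ induced by $\bar g$ on the base of the elliptic fibration, which exists since $\bar g$ preserves the relatively minimal elliptic fibration $\pi_t$. Choosing $a_0\in F_0$ with $\Fr^e(a_0)=s\cdot a$, the pair $(T_{a_0},\bar g)$ lifts $\bar g$ through the fiber-product cover $h_t\colon F_0\times\PP^1\to J^t(T)$ to an automorphism $\tilde g$ of $F_0\times\PP^1$. Here the key input is $(\Fr^e)^*\mathcal{M}^{\beta(t)}\cong\mathcal{O}_{F_0}$: the pulled-back isomorphism $(\Fr^e)^*\mathrm{diag}(c_0,\psi_1)$ is then a constant diagonal matrix, so $\tilde g$ acts on the $\PP^1$-factor by a single multiplication $m_\lambda$ with $\lambda\in k^*$, and $\lambda$ runs over all of $k^*$ as the lift varies. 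The relation $\pi_t\circ h_t=\Fr^e_{\PP^1}\circ p_2$ from \eqref{eqn:Fre} now yields $\gamma\circ\Fr^e_{\PP^1}=\Fr^e_{\PP^1}\circ m_\lambda=m_{\lambda^{p^e}}\circ\Fr^e_{\PP^1}$, and the dominance of $\Fr^e_{\PP^1}$ forces $\gamma=m_{\lambda^{p^e}}$.

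Finally, since $x\mapsto x^{p^e}$ is a bijection of $k^*$, I can choose the lift with $\lambda^{p^e}=\zeta$, so that $\gamma=m_\zeta=g_{\PP^1}$; the corresponding $g_s:=\bar g$ then induces $T_{s\cdot a}$ on $F$ and $g_{\PP^1}$ on $\PP^1$, as required. I expect the central paragraph to be the main obstacle: one must correctly transport a lift of $T_{s\cdot a}$ through the purely inseparable cover $h_t$ and read off its effect on the elliptic base as the $p^e$-th power of the fiberwise multiplication. This is exactly what explains how every multiplication of $\PP^1$---in particular the prescribed $g_{\PP^1}$---becomes attainable even though only the translation $T_{s\cdot a}$ is imposed on $F$, and it is the point where the characteristic-$p$ Frobenius geometry genuinely enters.
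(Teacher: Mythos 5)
Your argument is correct, and it reaches the goal by a route that is related to but organized differently from the paper's. The paper factors $g_s=\alpha\circ\beta$: since $T_{s\cdot a}^*\mc{M}\cong\mc{M}$, it first takes a lift $\alpha\in\Aut(J^t(T)/\PP^1)$ of the translation acting \emph{trivially} on the base of $\pi_t$ (this normalization is exactly the $\lambda=1$ member of your one-parameter family of lifts, though the paper does not spell out why it exists); it then invokes Maruyama's theorem to identify $\Aut(J^t(T)/F)\cong\mathbb{G}_m$ and takes $\beta$ acting as $g_{\PP^1}^{q}$ on each fiber $F_{f_t}$ of the $\PP^1$-bundle, where $q$ is the inverse of $p^e$ in $(\Z/n\Z)^*$, asserting that $\pi_t|_{F_{f_t}}=\Fr^e_{\PP^1}$ so that $\beta$ induces $g_{\PP^1}^{p^eq}=g_{\PP^1}$ on the base. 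You instead use a single lift of $T_{s\cdot a}$, note that the diagonal lifts modulo scalars form a $k^*$-torsor, compute the base action as $m_{\lambda^{p^e}}$ by transporting through the inseparable cover $h_t$, and solve $\lambda^{p^e}=\zeta$ using bijectivity of $x\mapsto x^{p^e}$ on $k^*$; since this root is unique, your $\lambda$ is forced to equal $\zeta^{q}$, so the two constructions in fact produce the same automorphism. Your route buys two things: it dispenses with the citation of Maruyama (the needed $\mathbb{G}_m$ of fiberwise scalings is already visible in the torsor of isomorphisms $\psi_1$), and it proves, rather than asserts, the key geometric fact that a fiberwise multiplication becomes its $p^e$-th power on the elliptic base; the paper's route buys a cleaner separation of the translation part from the rotation part, with the coprimality $(p,n)=1$ entering transparently through $q$. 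Two small points to tighten, neither a gap: the existence of the induced automorphism $\gamma$ on the base should be extracted from your own computation rather than from the a priori claim that $\bar g$ preserves $\pi_t$ --- since $h_t$ is finite flat (a base change of $\Fr^e$), it is a scheme epimorphism, so $\pi_t\circ\bar g\circ h_t=m_{\lambda^{p^e}}\circ\pi_t\circ h_t$ already yields $\pi_t\circ\bar g=m_{\lambda^{p^e}}\circ\pi_t$; and the compatibility $\Fr^e\circ T_{a_0}=T_{s\cdot a}\circ\Fr^e$ used to build $\tilde g$ deserves a word (it holds because $\Fr^e$ is an isogeny, hence a group homomorphism).
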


\begin{proof}
Since $T_{s\cdot a}^*\mc{M}\cong\mc{M}$, there exists an automorphism 
$$\alpha\in\Aut (J^t(T))(\stackrel{ \eqref{eqn:JTPP}}\cong \Aut (\PP(\mc{O}_F\oplus \mc{M}^{\beta(t)})))$$
 compatible with 
  $T_{s\cdot a}$ on $F$. Note that $T_{s\cdot a}$ lifts a translation $T_{s\cdot b}$ on $F_0$ for some $b\in F_0$ with $\Fr^e(b)=a$, and hence $\alpha$ lifts to $T_{s\cdot b}\times \id_{\PP^1}$ on $F_0\times \PP^1$.
\begin{equation*}
\xymatrix@!0{
& F_0 \ar@{->}'[d][dd]\ar[dl]_{T_{s\cdot b}} & 
& F_0\times \PP^1 \ar[dl]
\ar@{->}'[d][dd]\ar[ll]\ar[rr]&& \PP^1 \ar@{=}[dl]\ar[dd]\\
F_0\ar[dd]_{\Fr^e}& & F_0\times \PP^1 \ar[dd]^(.3){h_t}\ar[ll]
\ar[rr]&&\PP^1\ar[dd]&\\
& F\ar[dl]^<(0){T_{s\cdot a}} & & J^t(T)\ar@{->}'[l][ll]\ar@{->}'[r][rr] \ar[ld]^(.4)\alpha & &\PP^1\ar@{.>}[dl]^<(.4){\id_{\PP^1}} \\
F & & J^t(T) \ar[ll]^{f_t}\ar[rr]_{\pi_t} & &\PP^1& 
}
\end{equation*}
Therefore, $\alpha$ respects the elliptic fibration  $\pi_t$, i.e. $\alpha\in \Aut (J^t(T)/\PP^1)$. 

Next take an integer $q$ with $p^eq=1$ in $(\Z/n\Z)^*$. It follows from Lemma \ref{lem:autoPP} that
 there exists an automorphism $\beta\in\Aut(J^{t}(T)/F)$ which induces the automorphism $g_{\PP^1}^q$ on each fiber 
 $F_{f_t}$ (which we identify with $\PP^1$ by \eqref{eqn:PP}) of the $\PP^1$-bundle $f_t$. 
Combining \eqref{eqn:PP} with the commutativity of the right square in \eqref{eqn:Fre}, we  see that $\pi_t|_{F_{f_t}}\colon F_{f_t}\to \PP^1$ coincides with $\Fr^e_{\PP^1}$, 
 and then $\beta$ induces the automorphism 
$(g_{\PP^1})^{p^eq}=g_{\PP^1}$ on $\PP^1$, the base space of $\pi_t$.
\begin{equation*}\label{eqn:beta}
\xymatrix{
\PP^1 \ar[d]_{g_{\PP^1}^q}\ar[r]_\cong \ar@/^1.5pc/[rrr]|{\Fr^e_{\PP^1}}
&F_{f_t}\ar@{^{(}->}[r] &
J^t(T) \ar[r]_{\pi_t} \ar[d]^{\beta} & \PP^1 \ar[d]^{(g_{\PP^1})^{p^eq}=g_{\PP^1}} \\
\PP^1 \ar[r]^\cong \ar@/_1.5pc/[rrr]|{\Fr^e_{\PP^1}}
& F_{f_t}\ar@{^{(}->}[r]  \ar[r]  &J^t(T) \ar[r]^{\pi_t}  & \PP^1,\\
}
\end{equation*}
Hence, the automorphism $g_s:= \alpha\circ \beta$ has the desired property.
\end{proof}
Denote by $g$ a generator of the cyclic group $G=\Z/n\Z$, and define the action of $G$ on $J^{t}(T)$ by
\begin{equation}\label{eqn:action_st}
\rho_{s,t}\colon G\to \Aut (J^{t}(T)) \quad g\mapsto g_{s}.
\end{equation}
For the integer $i$ given in \eqref{eqn:im}, 
regard $i\in (\Z/n\Z)^*$ and $i \in  (\Z/p^e\Z)^*$, and
set $\rho_i:=\rho_{i,i}$.
We define the quotient variety to be
\begin{equation}\label{eqn:Si}
S_i:=J^i(T)/_{\rho_i} G
\end{equation}
by the action $\rho_i$,
and denote the quotient morphism by
$$
q_i \colon J^i(T)\to S_i.
$$ 
It is easy to see that $S$ is the quotient of $T=J^1(T)$ by the action $\rho_{s,1}$  for some $s$. Replace $a\in F$ with $s\cdot a$, and redefine $g_s$ and $\rho_{s,t}$ by this new $a$, so that $S=S_1$ holds. After this replacement,  we consider only the action $\rho_i$, but not general $\rho_{s,t}$. 

We set
$$
g^0_i:=T_{i\cdot b}\times g_{\PP^1}^q\in \Aut (F_0\times \PP^1).
$$
Then we see that $\ord  g^0_i=\ord T_{i\cdot b}=\ord  g_{\PP^1}^q =n$ and it is compatible with $g_i\in \Aut(J^i(T))$ defined in Claim \ref{cla:action}:
\begin{equation}\label{eqn:hrho}
h_i\circ g^0_i=g_i\circ h_i.
\end{equation}
We also define the action on $F_0\times \PP^1$ by 
\begin{equation}\label{eqn:action_0}
\rho^0_i\colon G\to \Aut (F_0\times \PP^1) \quad g\mapsto g^0_i
\end{equation}
for each $i$. 

Take an integer $j$ with $1\le j< m$, $(j, m)=1$ and $ij=1$ in $(\Z/m\Z)^*$. 
For the projection  
$$p_{13}\colon F_0  \times \Delta _{\PP ^1} \times F_0 \to F_0\times F_0,$$
define a line bundle
$$\mc U_0:=p_{13}^*\mc{O}_{F_0\times F_0}(\Delta_{F_0}+(j-1)F_0\times O+(i-1)O\times F_0)$$
on $$F_0  \times \Delta _{\PP ^1} \times F_0(\cong 
\noindent
(F_0\times \PP^1)\times_{\PP^1}(F_0\times \PP^1)).$$
Then $F_0\times \PP^1$ in the second factor in R.H.S. serves as $J^i(F_0\times \PP^1)$
where $\mc U_0$ plays the role of a universal sheaf, and moreover it is shown in \cite[page 3229]{Ue17}
 that it satisfies
\begin{equation}\label{eqn:g1gi0}
(\rho^0_1(g)\times \rho^0_i(g))^*\mc{U}_0\cong \mc{U}_0.
\end{equation} 

On the other hand, it follows from \cite[Theorem 5.3]{Br98} that we can  take a universal sheaf $\mc{U}'$ on $T\times_{\PP^1}J^i(T)$, which  satisfies that $\mc{U}'|_{z\times J^i(T)}$ is a line bundle of degree $j$ on $F_0$ for general $z\in T$.
For a point $(x,y)\in F_0\times (\PP^1\backslash \{ 0,\infty\})$,
there exists an isomorphism  
\begin{equation}\label{eqn:h1hi}
((h_1\times h_i)^*\mc{U}')|_{(F_0\times \PP^1)\times_{\PP^1} (x,y)}\cong 
\mc{U}'|_{T\times_{\PP^1} h_i((x,y))},
\end{equation}
since the restriction of $h_1\times h_i$ gives  
$$(F_0\times \PP^1)\times_{\PP^1} (x,y)\cong F_0\times y\cong  F_{y}\cong T\times_{\PP^1} h_i((x,y)),$$
where the second isomorphism comes from \eqref{eqn:hi}. 
Hence, we see that the L.H.S. in \eqref{eqn:h1hi}
 is a line bundle of degree $i$ on $F_0$. Then, by the universal property of $\mc{U}_0$,
there exists an automorphism $\phi_0\in \Aut (F_0)$ such that
$$(\id_{F_0  \times \Delta _{\PP ^1}}\times \phi_0)^*\mc{U}_0\cong (h_1\times h_i)^*\mc{U}'\otimes p_3^*\mc{N}_0$$ 
for some 
$\mc{N}_0\in \Pic^0 F_0$.

We shall construct an elliptic ruled surface $T'$ and (iso)morphisms $\phi_F, \phi, h'$
which make the following diagrams commutative:
\begin{equation}\label{eqn:fiber_product_0}
\xymatrix@!0{
& F_0 \ar@{->}'[d][dd]_{\Fr^e} \ar[dl]_{\phi_0} & 
& F_0\times \PP^1 \ar[dl]
\ar[dd]^{h_i}\ar[ll] \\
F_0\ar[dd]_{\Fr^e}& & F_0\times \PP^1 \ar[dd]^(.3){h'}\ar[ll] 
\\
& F\ar[dl]^<(0){\phi_F} & & J^i(T)\ar@{->}'[l][ll] \ar[ld]^\phi 
\\
F & & T' \ar[ll] 
}
\end{equation}
First, $\phi_0$ descends to $\phi_F\in \Aut (F)$ via $\Fr^e\colon F_0\to F$ by \cite[Corollary II.2.12]{Si09}, 
and $\phi_F$ induces an isomorphism 
$$\phi \colon   J^i(T)\cong \PP(\mc{O}_F\oplus \mc{M}^{\beta(i)})\to T':=\PP(\mc{O}_F\oplus \phi_{F*}\mc{M}^{\beta(i)}).$$ 
Note that 
$\phi_{F*}\in \Aut_0(\hat{F})$ 
preserves the subgroup 
$\ker \widehat{\Fr^e}=\hat{F}[p^e]=\Span{\mc{M}}$ 
of $\hat{F}$, and thus $\phi_{F*}\mc{M}^{\beta(i)}\in \Span{\mc{M}}$.
Hence we obtain a morphism 
$$
h'\colon F_0\times \PP^1\cong \PP(\mc{O}_{F_0}\oplus \mc{O}_{F_0}) \to T' \cong\PP(\mc{O}_F\oplus \phi_{F*}\mc{M}^{\beta(i)}),
$$
which fits into the diagram in \eqref{eqn:fiber_product_0}. 
Moreover we
have the following commutative diagram:
\begin{equation*}
\xymatrix{
 F_0  \times \Delta _{\PP ^1}\times F_0  \ar[d]_{h_1\times h'} &  F_0  \times \Delta _{\PP ^1}\times F_0 \ar[l]_{(\id_{F_0  \times \Delta _{\PP ^1})}\times \phi_0} \ar[d]^{h_1\times h_i} \ar[r]^{\qquad p_3}& F_0 \ar[d]^{\Fr^e}\\
 T\times_{\PP^1}T' & T\times_{\PP^1}J^i(T) \ar[l]^{\id_T\times \phi}\ar[r]_{\qquad f_i\circ p_2}&F \\
}
\end{equation*}
Take $\mc{N}\in \Pic ^0F$ such that $(\Fr^{e})^*\mc{N}=\mc{N}_0$, and  
define a line bundle
$$\mc{U}:=(\id_T\times \phi)_*(\mc{U}'\otimes (f_i\circ p_2)^*\mc{N})
$$ 
on $T\times_{\PP^1}T'$
so that 
\begin{equation}\label{eqn:U_0U}
\mc{U}_0\cong (h_1\times h')^*\mc{U}
\end{equation}
holds. The pair $(T',\mc{U})$ serves as $J^i(T)$ and its universal sheaf, ane thus we redefine $T'$ to be $J^i(T)$.

\begin{claim}\label{eqn:g1gi}
The universal sheaf $\mc{U}$ on $T\times_{\PP^1}J^i(T)$  satisfies 
$$
(\rho_1(g)\times \rho_i(g))^*\mc{U}\cong \mc{U}.
$$
\end{claim}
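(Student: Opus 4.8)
The plan is to prove the claim by showing that the line bundle $\mc{Q}:=(\rho_1(g)\times\rho_i(g))^*\mc{U}\otimes\mc{U}^{-1}$ on $T\times_{\PP^1}J^i(T)$ is trivial, where I abbreviate $g_1:=\rho_1(g)$, $g_i:=\rho_i(g)$ and recall that $\mc{U}$ is a line bundle. First I would record the compatibility $(g_1\times g_i)\circ(h_1\times h_i)=(h_1\times h_i)\circ(g^0_1\times g^0_i)$, which is exactly \eqref{eqn:hrho} for the index $i$ together with its analogue for the index $1$ (both automorphisms covering $g_{\PP^1}$, so the fibre-product map is well defined). Pulling $\mc{U}$ back along $h_1\times h_i$ and feeding in \eqref{eqn:U_0U} and \eqref{eqn:g1gi0} then yields
\begin{align*}
(h_1\times h_i)^*(g_1\times g_i)^*\mc{U}
&\cong(g^0_1\times g^0_i)^*(h_1\times h_i)^*\mc{U}
\cong(g^0_1\times g^0_i)^*\mc{U}_0\\
&\cong\mc{U}_0\cong(h_1\times h_i)^*\mc{U},
\end{align*}
so that $(h_1\times h_i)^*\mc{Q}\cong\mc{O}$.

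Next I would reduce $\mc{Q}$ to a bundle pulled back from $J^i(T)$. Since $g_1$ and $g_i$ both cover $g_{\PP^1}$, the pair $g_1\times g_i$ is an automorphism of $T\times_{\PP^1}J^i(T)$, and $(g_1\times g_i)^*\mc{U}$ is again a universal sheaf for the relative moduli functor represented by $J^i(T)$; by uniqueness of universal families we get $\mc{Q}\cong p_2^*\mc{P}$ for some $\mc{P}\in\Pic(J^i(T))$, and the claim becomes $\mc{P}\cong\mc{O}$. Because $p_2\circ(h_1\times h_i)=h_i\circ(\text{projection away from the first factor})$, and that projection admits a section, the relation $(h_1\times h_i)^*\mc{Q}\cong\mc{O}$ descends to $h_i^*\mc{P}\cong\mc{O}$. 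Now writing $\Pic(J^i(T))=f_i^*\Pic(F)\oplus\Z\xi$ with $\xi=\mc{O}(1)$, and using that $h_i$ is the base change of $\Fr^e\colon F_0\to F$ in \eqref{eqn:Fre} (so $h_i^*\xi$ has relative degree $1$ over $F_0$ and $h_i^*f_i^*=p_1^*(\Fr^e)^*$), the triviality of $h_i^*\mc{P}$ forces the $\xi$-component to vanish and the $\Pic(F)$-component to lie in $\ker\big((\Fr^e)^*\colon\hat{F}\to\hat{F_0}\big)=\langle\mc{M}\rangle$. Hence $\mc{P}\cong f_i^*\mc{M}^k$ for some $k\in\Z$.

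Finally I would kill $k$ using the order of the group action. Since $g_1$ and $g_i$ generate the order-$n$ action of $G=\Z/n\Z$, we have $(g_1\times g_i)^n=\id$, and iterating $\mc{Q}\cong p_2^*\mc{P}$ gives $\bigotimes_{l=0}^{n-1}(g_i^l)^*\mc{P}\cong\mc{O}$. As $g_i$ induces the translation $T_{i\cdot a}$ on $F$ through $f_i$, and degree-zero line bundles are translation invariant, each factor equals $\mc{P}=f_i^*\mc{M}^k$, so $f_i^*\mc{M}^{kn}\cong\mc{O}$ and thus $\mc{M}^{kn}\cong\mc{O}$. Since $\ord\mc{M}=p^e$ and $(n,p^e)=1$, this gives $\mc{M}^k\cong\mc{O}$, i.e. $\mc{P}\cong\mc{O}$, proving the claim.

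The step I expect to be the main obstacle is justifying rigorously that $(g_1\times g_i)^*\mc{U}$ is again a universal sheaf, equivalently that the automorphism $g_i$ built in Claim \ref{cla:action} coincides with the automorphism of $J^i(T)$ induced functorially by $g_1$ through the relative moduli construction. This is precisely where the index $i$ in the translation $T_{i\cdot a}$ must be matched against the degree-$i$ scaling of translations on the relative Jacobian, and care is needed because an automorphism of the $\PP^1$-bundle $J^i(T)\to F$ is not determined by its action on the base $F$ alone; once this compatibility is settled, the reduction $\mc{Q}\cong p_2^*\mc{P}$ and the arithmetic killing of $k$ are routine.
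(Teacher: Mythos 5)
Your overall strategy is sound and your first and last steps are correct, but the pivotal middle step --- the reduction $\mc{Q}\cong p_2^*\mc{P}$ --- has a genuine gap exactly where you flagged it, and as written it is not a proof. Pulling back a universal sheaf by the automorphism $g_1\times g_i$ does \emph{not} immediately give a universal sheaf in the sense you need: uniqueness of universal families only yields $(g_1\times g_i)^*\mc{U}\cong(\id_T\times\psi)^*\mc{U}\otimes p_2^*\mc{P}$ for \emph{some} automorphism $\psi$ of $J^i(T)$ over $\PP^1$ (the classifying morphism of the pulled-back family), and the assertion $\psi=\id$ is precisely the fiberwise statement $g_1^*\bigl(\mc{U}|_{T\times g_i(z)}\bigr)\cong\mc{U}|_{T\times z}$ that you left unproved. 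The good news is that the obstacle is not the abstract compatibility you worried about (matching $T_{i\cdot a}$ against a ``degree-$i$ scaling''): it follows from your own first paragraph. Restricting your isomorphism $(h_1\times h_i)^*\mc{Q}\cong\mc{O}$ to a slice $F_0\times y_0\times x$ with $y_0$ over $\PP^1\setminus\{0,\infty\}$, which maps isomorphically onto $F_y\times\{z\}$ by \eqref{eqn:hi}, gives exactly the fiberwise matching for all $z$ over $\mathbb{G}_m$; since the moduli space is fine there, $\psi=\id$ on a dense open set, hence everywhere. To have $\psi$ defined \emph{globally} in the first place you must still check that the restrictions of $(g_1\times g_i)^*\mc{U}$ over the two multiple fibers are flat and stable (true, since $g_{\PP^1}$ fixes $0$ and $\infty$, so $g_1$ preserves each multiple fiber and the relevant numerical polarization data, but this needs saying). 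If instead you tried to get $\mc{Q}\cong p_2^*\mc{P}$ by seesaw along $p_2$, note that Hartshorne's Exercise III.12.4 fails over the non-reduced fibers $\pi_1^{-1}(0),\pi_1^{-1}(\infty)$, and you would only obtain $\mc{Q}\cong p_2^*\mc{P}\otimes\mc{O}(b_0\,D_0\times D'_0+b_\infty\,D_\infty\times D'_\infty)$ --- which is essentially where the paper's argument lives.

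For comparison: the paper avoids the universal property entirely. It restricts to fibers of $\pi_1\times\pi_i$ over $\mathbb{G}_m$, applies the seesaw exercise there to conclude $L:=\mc{Q}$ is trivial off the two bad fibers, writes $L\cong\mc{O}\bigl(b(D_0\times D'_0-D_\infty\times D'_\infty)\bigr)$ as in \eqref{eqn:D0D0}, deduces $\ord L\mid p^e$ from the multiplicity of the multiple fibers, and then runs the same order-$n$ iteration you use (invariance of the right-hand side of \eqref{eqn:D0D0} under the action gives $L^{\otimes n}\cong\mc{O}$, and $p\nmid n$ kills $L$). Your final step --- $\mc{P}\cong f_i^*\mc{M}^k$ via the Picard decomposition of the $\PP^1$-bundle and $\ker\bigl((\Fr^e)^*\bigr)=\Span{\mc{M}}$, then $\mc{M}^{kn}\cong\mc{O}$ by telescoping and translation-invariance of $\Pic^0$, then coprimality of $n$ and $p^e$ --- is correct and is the same arithmetic trick in different clothing (your $\ord\mc{M}^k\mid p^e$ playing the role of the paper's $\ord L\mid p^e$). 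So: the route is genuinely different and salvageable, and once the classifying-map identity is established as above it arguably packages the boundary analysis more cleanly; but as submitted, the step ``by uniqueness of universal families we get $\mc{Q}\cong p_2^*\mc{P}$'' is a real hole, not a routine verification.
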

\begin{proof}
Take $y_0\in \PP^1\backslash \{0,\infty\}$ with $y:=\Fr^e(y_0)\in \PP^1\backslash \{0,\infty\}$.
Denote by $F_y\times F'_y$ the fiber of 
$
\pi_1\times \pi_i \colon T\times_{\PP^1}J^i(T)\to \PP^1
$
over the point $y$. 
Pull back the isomorphism \eqref{eqn:U_0U} to the subscheme $F_0\times y_0\times  F_0$, which is isomorphic to $F_y\times F'_y$ by \eqref{eqn:hi}, and combine \eqref{eqn:hrho} and \eqref{eqn:g1gi0} with it, then we have isomorphisms 
$$
((\rho_1(g)\times \rho_i(g))^*\mc{U})|_{F_y\times F_y}\cong  ((\rho^0_{1}(g)\times \rho^0_{i}(g))^* \mc{U}_0)|_{F_0\times y_0\times F_0}\cong\mc{U}_0|_{F_0\times y_0\times F_0}\cong \mc{U}|_{F_y\times F_y}.
$$
\begin{equation*}
\xymatrix{
F_0\times y_0\times F_0 \ar@{^{(}->}[r]\ar[d]_\cong & F_0  \times \Delta _{\PP ^1}\times F_0  \ar[d]_{h_1\times h_i}\ar[r]^{\qquad p_2} & \PP ^1 \ni y_0 \ar@<-2ex>[d]^{\Fr^e_{\PP^1}} \\
F_y\times F'_y \ar@{^{(}->}[r]  &T\times_{\PP^1}J^i(T) \ar[r]_{\quad \pi_1\times \pi_i} & \PP^1 \ni y \\
}
\end{equation*}
This yields that the line bundle $L:=(\rho_1(g)\times \rho_i(g))^*\mc{U}\otimes \mc{U}^{-1}$ is trivial over the open set $(\pi_1\times \pi_i)^{-1}(\PP^1\backslash \{0,\infty\})$ by  \cite[Exercise III.12.4]{Ha77}.  
We also see by \eqref{eqn:hrho}, \eqref{eqn:g1gi0} and \eqref{eqn:U_0U} that $(h_1\times h_i)^*L$ is trivial over $\PP^1\backslash \{0,\infty\}$, and thus 
\begin{equation}\label{eqn:D0D0}
L\cong \mc{O}_{T\times_{\PP^1}J^i(T)}(b(D_0\times D'_0-D_\infty\times D'_\infty))
\end{equation}
for some $b\in \Z$, where $p^eD_0$ and $p^eD'_0$ (resp.~$p^eD_\infty$ and $p^eD'_\infty$) are the multiple fibers over $0\in \PP^1$ (resp.~$\infty$) of $\pi_1$ and $\pi_i$. 
Note that $\ord L$ divides $p^e$, the multiplicity of the multiple fibers.
Since $\ord (\rho_1(g)\times \rho_i(g))=n$ and the R.H.S. in \eqref{eqn:D0D0} is $(\rho_1(g)\times \rho_i(g))$-invariant,
we see that 
$$
\mc{U}\cong (\rho_1(g)\times \rho_i(g))^{n*}\mc{U}\cong(\rho_1(g)\times \rho_i(g))^{(n-1)*}\mc{U}\otimes L\cong\cdots \cong \mc{U}\otimes L^{\otimes n},
$$
and hence $\ord L \mid n$. Since  $p \nmid n$, 
we have $\ord L=1$, as it is required.  
\end{proof}

Recall that we have the following commutative diagram by the definition of $S_i$ in \eqref{eqn:Si}:
\begin{equation*}
\xymatrix{
F \ar[d]_{q_E} & J^i(T) \ar[l]_{f_i} \ar[r]^{\pi_i} \ar[d]^{q_i} & \PP^1 \ar[d]^{q_{\PP^1}}  \\
E & S_i \ar[r]_{\pi_{S_i}}\ar[l]\ar@{}[lu]|{\Box} & \PP^1\\
}
\end{equation*}
Here, $q_E$ and $q_{\PP^1}$ are the same one appeared in \eqref{eqn:F0FE},
and $\pi_{S_i}$ is an elliptic fibration.

\begin{claim}\label{cla:case2}
For each $i$, there exists $\alpha(i) \in (\Z/m\Z)^*$ such that we have an isomorphism 
$$S_i\cong \PP(\cO_E \oplus \cL^{\alpha(i)}).$$ 
over $E$.
\end{claim}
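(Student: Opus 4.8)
The plan is to show that $S_i = J^i(T)/_{\rho_i}G$ is an elliptic ruled surface of the form $\PP(\cO_E \oplus \cL^{\alpha(i)})$ by first identifying its underlying ruled structure over $E$, and then pinning down which line bundle it corresponds to. First I would observe that the quotient $q_i\colon J^i(T)\to S_i$ sits in the commutative diagram recalled just before the claim, in which the left square is a fiber product: the $\PP^1$-bundle structure $f_i\colon J^i(T)\to F$ descends along $q_E\colon F\to E$ (the quotient by $\Span{a}$) to a $\PP^1$-bundle structure $f\colon S_i\to E$, because the action $\rho_i$ on $J^i(T)$ covers the translation $T_{i\cdot a}$ on $F$ via $f_i$. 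Thus $S_i$ is genuinely a $\PP^1$-bundle over $E$, say $S_i\cong\PP(\cO_E\oplus \cN)$ for some $\cN\in\Pic^0 E$, and it only remains to compute $\cN$ and check its order is $m$.

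Next I would determine $\cN$ concretely. By Claim \ref{eqn:g1gi}, the universal sheaf $\mc U$ on $T\times_{\PP^1}J^i(T)$ is invariant under $\rho_1(g)\times\rho_i(g)$, so it descends to a sheaf on the quotient $(T\times_{\PP^1}J^i(T))/G = S\times_{\PP^1}S_i$ (using $S=S_1$). This descended kernel realizes $S_i$ as a relative moduli space of sheaves on the elliptic fibration $\pi\colon S\to\PP^1$, and hence identifies $S_i$ with $J^{i}(S)$ up to the standard ambiguities; in particular it exhibits $S_i$ as a Fourier--Mukai partner of $S$. Combined with the characterization of Fourier--Mukai partners from Lemma \ref{lem:SJS}(ii) and the structure result of \S \ref{subsec:FM_ruled}, this forces $S_i\cong\PP(\cO_E\oplus\cL^{\alpha(i)})$ for some $\alpha(i)$ with $\ord\cL^{\alpha(i)}=m$, i.e.\ $\alpha(i)\in(\Z/m\Z)^*$. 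The explicit value $\alpha(i)$ can be read off from the chosen degree-$j$ (where $ij=1$) normalization of $\mc U$ together with the $p^e$-part computation $J^i(T)\cong\PP(\cO_F\oplus\cM^{\beta(i)})$ already established.

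I expect the main obstacle to be the descent step: verifying that the $\rho_1(g)\times\rho_i(g)$-invariance of $\mc U$ genuinely produces a well-behaved universal family on the quotient $S\times_{\PP^1}S_i$, so that $S_i$ is recognized as a relative moduli space and therefore as $J^{\alpha(i)}(S)$. One must check that the $G$-action on $T\times_{\PP^1}J^i(T)$ is free enough (away from the multiple fibers) for the quotient to behave well, and that the linearization supplied by Claim \ref{eqn:g1gi} descends $\mc U$ to an honest sheaf rather than merely a twisted one. The subtlety is concentrated over the two multiple fibers at $0,\infty\in\PP^1$, exactly where the action $\rho_i$ has fixed behavior and where the multiplicity-$p^e$ phenomena live; here I would use the description of the multiple fibers and the fact that $\ord L\mid p^e$ and $\ord L\mid n$ with $p\nmid n$ forced $L$ trivial in the previous claim, to argue the descent is unobstructed. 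Once descent is secured, identifying $\alpha(i)$ and confirming $(\alpha(i),m)=1$ is a bookkeeping computation using \eqref{eqn:group_structure} and the already-proven case $m=p^e$.
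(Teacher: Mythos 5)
There is a genuine gap, and it sits exactly at the step you dismiss as bookkeeping. Your argument runs: descend $\mc U$ (Claim \ref{eqn:g1gi}) to recognize $S_i$ as a relative moduli space, hence a Fourier--Mukai partner of $S$, and then invoke Lemma \ref{lem:SJS}(ii) and the structure results of \S \ref{subsec:FM_ruled} to conclude $S_i\cong\PP(\cO_E\oplus\cL^{\alpha(i)})$. But those results only force $S_i\cong\PP(\cO_E\oplus\cL')$ for \emph{some} $\cL'\in\Pic^0E$ with $\ord\cL'=m$ --- they say nothing about $\cL'$ being a power of $\cL$. That is precisely the non-trivial content of the claim: in the case at hand $m=np^e$ with $p\nmid n$, so $\hat{E}[m]$ has non-cyclic prime-to-$p$ part ($\cong \Z/n\Z\times\Z/n\Z$ in that part), and an order-$m$ element need not lie in $\Span{\cL}$. (Contrast the case $m=p^e$, where cyclicity of $\hat{E}[p^e]$ made this immediate; indeed, establishing $\cL'\in\Span{\cL}$ for general $m$ is the whole point of the quotient construction and of the inclusion \eqref{eqn:inj}, so quoting the FM-partner structure here is close to circular.) Your deferred computation of ``the explicit value $\alpha(i)$'' from the degree-$j$ normalization and $\beta(i)$ is not carried out, and nothing in your outline supplies it.

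The paper's proof is much shorter and supplies exactly this missing ingredient. By Theorem \ref{TUmain}, $S_i\cong\PP(\cO_E\oplus\cL_i)$ with $\ord\cL_i=m$ (this also repairs the unjustified assertion in your first step that $S_i$ is decomposable with a $\Pic^0$ summand; a priori a $\PP^1$-bundle over $E$ could be indecomposable or have a summand of infinite order, and it is the elliptic fibration on $S_i$ with its multiplicity-$m$ multiple fibers that pins down case (i-2)). Then, since the left square of the diagram before the claim is a fiber product and the pullback of $J^i(T)$ along $\Fr^e\colon F_0\to F$ is trivial by \eqref{eqn:Fre}, the pullback of $S_i$ to $F_0$ is $F_0\times\PP^1$; hence
$$
\cL_i\in \ker\bigl(\widehat{\Fr^e}\circ\widehat{q_E}\bigr)=\ker\bigl(\hat{E}\to\hat{F_0}=\hat{E}/\Span{\cL}\bigr)=\Span{\cL},
$$
so $\cL_i=\cL^{\alpha(i)}$, and $\ord\cL_i=m$ gives $\alpha(i)\in(\Z/m\Z)^*$. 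Note also that your route conflates this claim with Claim \ref{cla:JS}: the descent of $\mc U$ and the identification $S_i\cong J^i(S)$ is the content of that separate claim, which the paper keeps independent of the present one; the paper never needs the moduli interpretation of $S_i$ to determine its ruled-surface type, only the fiber-product geometry of the covering $F_0\to E$.
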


\begin{proof}
First of all, we know by Theorem \ref{TUmain} that there exisits an isomorphism  
 $S_i\cong \PP(\cO_E \oplus \cL_{i})$ 
 over $E$ for some $\cL_i\in \Pic^0{E}$ with $\ord \cL_i=m$. 
Then the result follows from 
$$
\cL_i\in \ker (\widehat{\Fr^e}\circ \widehat{q_E})=\Span{\cL}\cong \Z/m\Z.
$$ 
\end{proof}

Recall that $S=S_1$ below.

\begin{claim}\label{cla:JS}
There exists an isomorphism $J^i(S)\cong S_i$.
\end{claim}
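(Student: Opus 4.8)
The plan is to construct an explicit isomorphism $J^i(S) \cong S_i$ by exploiting the universal sheaf $\mc{U}$ on $T \times_{\PP^1} J^i(T)$ together with its $G$-equivariance established in Claim \ref{eqn:g1gi}. The key idea is that $S = S_1 = T/_{\rho_1} G$ and $S_i = J^i(T)/_{\rho_i} G$ are quotients of $T$ and $J^i(T)$ by the respective $G$-actions, and I want to descend the relative Fourier--Mukai correspondence between $T$ and $J^i(T)$ to one between the quotients $S$ and $S_i$. First I would observe that $\mc{U}$, being $(\rho_1(g) \times \rho_i(g))$-invariant by Claim \ref{eqn:g1gi}, descends to a sheaf $\bar{\mc{U}}$ on the quotient $(T \times_{\PP^1} J^i(T))/G \cong S \times_{\PP^1} S_i$ (after checking the action is free or at least that the quotient behaves well relative to $\PP^1$). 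This descended universal sheaf should realize $S_i$ as the relative moduli space $J^i(S)$, i.e. as $J_S(1,i)$, which is precisely the definition of $J^i(S)$.

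The core steps I would carry out are as follows. First, identify the quotient $(T \times_{\PP^1} J^i(T))/_{\rho_1 \times \rho_i} G$ with $S \times_{\PP^1} S_i$ over the base $q_{\PP^1} \colon \PP^1 \to \PP^1$, using that both $S$ and $S_i$ are formed as quotients over the same $G = \Z/n\Z$ action covering $g_{\PP^1}$. Second, use the descent of $\mc{U}$ to produce a sheaf $\bar{\mc{U}}$ on $S \times_{\PP^1} S_i$ that is flat over $S_i$ and whose restriction to a general fiber $F_y \times \{y'\}$ is a stable rank $1$, degree $i$ sheaf on the elliptic fiber $F_y$ of $\pi \colon S \to \PP^1$. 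Since the general fiber of $\pi$ is isomorphic to $F_0$ (the shared fiber established via \eqref{eqn:hi}), and $\mc{U}$ restricts to degree-$i$ line bundles by construction, the descended sheaf should parametrize exactly the degree-$i$ line bundles on the fibers of $\pi$. Third, invoke the universal property of the relative moduli space $\cM(S/\PP^1)$: the flat family $\bar{\mc{U}}$ induces a classifying morphism $S_i \to J^i(S)$, and conversely the construction is symmetric, yielding the inverse. Because both $S_i$ and $J^i(S)$ are smooth minimal elliptic fibrations over $\PP^1$ (by Bridgeland's result and Lemma \ref{lem:SJS}(i)) sharing the same multiple-fiber data and the same general fiber, this classifying morphism is an isomorphism.

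The main obstacle I expect is verifying that the descended sheaf $\bar{\mc{U}}$ genuinely is a \emph{universal} family for $J^i(S)$, rather than merely a family of the right sheaves — that is, controlling its behavior over the two multiple fibers at $0, \infty \in \PP^1$ where the $G$-action and the Stein factorization interact delicately. Over the punctured base $\PP^1 \setminus \{0, \infty\}$ everything is transparent, since there the fibration is an honest $F_0$-bundle and the isomorphisms \eqref{eqn:hi} and \eqref{eqn:U_0U} apply directly; the difficulty is extending across the multiple fibers, where the line bundle discrepancy $L$ in \eqref{eqn:D0D0} lives. The key leverage is precisely Claim \ref{eqn:g1gi}, which guarantees $L$ is trivial after accounting for $n$ being coprime to $p$, so the invariance holds globally and the descent is unobstructed. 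I would therefore structure the proof so that the $G$-invariance of $\mc{U}$ carries all the weight at the multiple fibers, and then appeal to the coarse (or fine) moduli interpretation of $J^i(S)$ recalled in \S\ref{subsec:FMeliiptic_surface} to conclude $J^i(S) \cong S_i$, reducing the remaining verification to comparing fiber data, which matches by Lemma \ref{lem:SJS}(i).
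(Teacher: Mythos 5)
Your overall strategy coincides with the paper's: descend the universal sheaf $\mc{U}$ using the $G$-invariance from Claim \ref{eqn:g1gi}, compare $S_i$ with $J^i(S)$ through the moduli property over the punctured base, and conclude by relative minimality. However, two of your steps have concrete problems. The first is minor but worth flagging: the identification $(T\times_{\PP^1}J^i(T))/G\cong S\times_{\PP^1}S_i$ is false globally. Over the fixed points $0,\infty$ of $g_{\PP^1}$, the quotient of the fiber $D_0\times D_0'$ by the diagonal copy of $G$ is an \'etale $n$-fold cover of $(D_0/G)\times(D_0'/G)$, since one is dividing by the diagonal rather than by $G\times G$; the identification holds only over $\PP^1\setminus\{0,\infty\}$, which is fortunately the only place you use it. The paper sidesteps this entirely by pushing $\mc{U}$ forward along $q_1\times\id_{J^i(T)}$ to the \emph{absolute} product $S\times J^i(T)$, observing invariance for the action that is trivial on $S$, and descending along $\id_S\times q_i$ to a sheaf $\mc{U}_i$ on $S\times S_i$. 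Relatedly, you do not actually obtain a \emph{global} classifying morphism $S_i\to J^i(S)$: triviality of the line bundle $L$ in \eqref{eqn:D0D0} gives descent of the sheaf, but it does not show that the descended family restricts to stable pure-dimension-$1$ sheaves, flat over $S_i$, along the multiple fibers at $0,\infty$. The paper never claims this either; it only produces a morphism between open subsets over $\PP^1\setminus\{0,\infty\}$.

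The genuine gap is your inverse. You assert that ``the construction is symmetric, yielding the inverse,'' but it is not: $S_i$ is defined as the quotient $J^i(T)/_{\rho_i}G$ and carries no moduli interpretation at this stage --- that interpretation is precisely what Claim \ref{cla:JS} establishes --- so there is no universal property of $S_i$ into which $J^i(S)$ could be classified. Your fallback, that a morphism between smooth relatively minimal elliptic fibrations over $\PP^1$ with the same general fiber and the same multiple-fiber data must be an isomorphism, is false as stated: a fiberwise isogeny of degree greater than $1$ (for instance, multiplication by $2$ on $F_0\times\PP^1$ over $\PP^1$, when $p\ne 2$) satisfies all those conditions without being an isomorphism. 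What is needed, and what the paper supplies, is generic injectivity of the classifying map: since $\mc{U}$ is a universal family and $q_1$ restricts to isomorphisms on general fibers by \eqref{eqn:hi}, one has $\mc{U}_i|_{S\times q_i(z_1)}\not\cong\mc{U}_i|_{S\times q_i(z_2)}$ for $z_1\ne z_2$, so the morphism over the punctured base is injective, whence $S_i$ and $J^i(S)$ are birational over $\PP^1$; then uniqueness of relatively minimal models, \cite[Proposition III.8.4]{BHPV}, yields $S_i\cong J^i(S)$. This repair uses only data you already have in hand, but without it your argument does not close.
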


\begin{proof}
First, we shall show that there exists a coherent sheaf $\mc{U}_i$ on $S\times S_i$ such that
\begin{equation}\label{eqn:descent}
 (q_{1}\times {\id}_{J^i(T)})_*\mc{U}\cong  ( {\id}_{S}\times q_i)^*\mc{U}_i
\end{equation}
for the morphisms
$$
T\times J^i(T) \stackrel{q_1\times\id_{J^i(T)}}\to S\times J^i(T)\stackrel{\id_S\times q_i}\to S\times S_i.
$$
Claim \ref{eqn:g1gi} implies that
$$
(\rho_1(g)\times  {\id}_{J^i(T)})^*\mc U\cong ( {\id}_{T}\times \rho_i(g)^{-1})^*\mc U.
$$
Push forward the both sides by the morphism $q_{1}\times {\id}_{J^i(T)}$. Then we obtain 
$$
(q_{1}\times {\id}_{J^i(T)})_*\mc{U}\cong ({\id}_{S}\times \rho_i(g)^{-1})^* 
(q_{1}\times {\id}_{J^i(T)})_*\mc{U},
$$
that is, the sheaf
$
 (q_{1}\times {\id}_{J^i(T)})_*\mc{U}
$
is $G$-invariant with respect to the diagonal action of $G$ on $S\times J^i(T)$, 
where $G$ acts on $S$ trivially. 
Since $G=\Span{g}$ is a finite cyclic group, the $G$-invariance of coherent sheaves is equivalent to the $G$-equivariance, and hence there exists a coherent sheaf $\mc{U}_i$ on $S\times S_i$ 
satisfying \eqref{eqn:descent}.

For $z\in J^i(T)$, we have
$$
\mc{U}_i|_{S\times q_i(z)}\cong ((q_{1}\times {\id}_{J^i(T)})_*\mc{U})|_{S\times z}
\cong q_{1*} (\mc{U}|_{T\times z}).
$$
Here, the second isomorphism follows from \cite[Lemma 1.3]{BO95} and the smoothness of $q_1$.
Suppose that $z$ is not contained in multiple fibers of $\pi_i$, that is, $y:=\pi_i(z)\in \PP^1\backslash \{0,\infty\}$ by the convention stated in \S \ref{subsec:i-2}.
Then $\mc{U}|_{T\times z}$ is actually a sheaf on $F_y\times z$, and 
 the restriction $q_1|_{F_y\times z}$  is an isomorphism by \eqref{eqn:hi}.
It turns out that $\mc{U}_i|_{S\times {q_i(z)}}$ is 
also a line bundle of degree $i$ on $F_{q_{\PP ^1}(y)}\times q_i(z)$.

Then, by the universal property of $J^i(S)$,
there exists a morphism from 
$$\pi_{S_i}^{-1}(\PP^1\backslash\{0,\infty\})(\subset S_i)\to \pi_{J^i(S)}^{-1}(\PP^1\backslash\{0,\infty\})(\subset J^i(S))$$
  over $\PP^1\backslash\{0,\infty\}$, where $\pi_{S_i}$ and $\pi_{J^i(S)}$ are the elliptic fibrations on $S_i$ and $J^i(S)$ respectively.
Since $\mc{U}_i|_{S\times q_i(z_1)}\not\cong\mc{U}_i|_{S\times q_i(z_2)}$ on $F_y$ for $z_1\ne z_2\in J^i(T)$, this morphism is injective, and hence  
$S_i$ and $J^i(S)$ are birational over $\PP^1$.
Then, \cite[Proposition III.8.4]{BHPV} implies that $S_i \cong J^i(S)$. 
\end{proof}

Combining Claims \ref{cla:case2} and \ref{cla:JS},
we obtain the inclusion \eqref{eqn:inj} by the map
$$
J^i(S)\mapsto  \PP(\cO_E \oplus \cL^{\alpha(i)}).
$$
The next aim is to show \eqref{eqn:Hpi_HE}.

\begin{claim}\label{claim:alpha}
There exists an injective group homomorphism 
$$
\overline{\alpha} \colon H_\pi/\{\pm 1\}\to H^\mc{L}_{\hat{E}}/\{\pm 1\}.
$$
\end{claim}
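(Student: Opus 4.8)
The plan is to build $\overline{\alpha}$ from the function $\alpha$ already produced in Claims \ref{cla:case2} and \ref{cla:JS}. For $i\in H_\pi$ we have $J^i(S)\cong S_i\cong \PP(\cO_E\oplus\cL^{\alpha(i)})$, and since $i\in H_\pi$ means $J^i(S)\cong S=\PP(\cO_E\oplus\cL)$, Lemma \ref{lem:projbdl} gives $\alpha(i)\in H^{\cL}_{\hat{E}}$. A priori $\alpha(i)$ is only well defined modulo $H^{\cL}_{\hat{E}}$, so to pin it down I would fix an isomorphism $\psi_i\colon J^i(S)\xrightarrow{\sim}S$ and let $\phi_i\in\Aut_0(\hat{E})$ be the automorphism induced on $\hat{E}=\Pic^0 E$ (translations on $E$ act trivially on $\Pic^0$), so that $\phi_i(\cL)=\cL^{\alpha(i)}$. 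The crucial point is that the class of $\phi_i$ modulo $\{\pm\rid\}$ is independent of $\psi_i$: two choices differ by an element of $\Aut(S)$, and $\phi\in\Aut_0(\hat{E})$ comes from $\Aut(S)$ iff $\phi(\cL)=\cL^{\pm1}$, while $\phi(\cL)=\cL$ forces $\phi=\rid$ by Lemma \ref{lem:action} as $\ord\cL=m\ge5$; hence the image of $\Aut(S)$ in $\Aut_0(\hat{E})$ is exactly $\{\pm\rid\}$. Thus $\overline{\alpha}(i):=[\alpha(i)]\in H^{\cL}_{\hat{E}}/\{\pm1\}$ is well defined.

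Next I would check that $\overline{\alpha}$ is a homomorphism killing $\{\pm1\}$. Given $\psi_i\colon J^i(S)\to S$ and $\psi_j\colon J^j(S)\to S$, functoriality of the relative Jacobian over $C=\PP^1$ produces $J^i(\psi_j)\colon J^i(J^j(S))\to J^i(S)$ inducing the same automorphism $\phi_j$ on $\hat{E}$; composing with the identification \eqref{eqn:group_structure} and with $\psi_i$ yields an isomorphism $J^{ij}(S)\to S$ inducing $\phi_i\circ\phi_j$ on $\hat{E}$. Since $\phi_i$ is a group automorphism, $\cL^{\alpha(ij)}=\phi_i\phi_j(\cL)=\phi_i(\cL^{\alpha(j)})=\cL^{\alpha(i)\alpha(j)}$, so $\alpha(ij)\equiv\alpha(i)\alpha(j)\pmod{\pm1}$. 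The standard duality $J^{-1}(S)\cong J^1(S)=S$ induces $\pm\rid$ on $\hat{E}$, so $\overline{\alpha}(-1)=[1]$; therefore $\overline{\alpha}$ descends to a homomorphism $H_\pi/\{\pm1\}\to H^{\cL}_{\hat{E}}/\{\pm1\}$, whose injectivity remains to be checked.

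For injectivity, suppose $\overline{\alpha}(i)=[1]$, i.e. $\alpha(i)=\pm1$; then $\phi_i(\cL)=\cL^{\pm1}$ forces $\phi_i=\pm\rid$ by Lemma \ref{lem:action}, and after composing $\psi_i$ with the lift of $-\rid$ I may assume $\psi_i\colon J^i(S)\xrightarrow{\sim}S$ covers $\rid_E$. I would then pass to the generic fiber of the elliptic fibration: because $q_1\colon F_0\to E$ factors through the Frobenius $\Fr^e$ and the degree-$n$ separable isogeny $q_E$ as in \eqref{eqn:F0FE}, and the deck transformations of $q_E$ act on $F$ by translations in $\Span{a}$, an isomorphism inducing $\rid_E$ on the ruling base can only induce a translation on the general elliptic fiber $J^0_\eta\cong F_0$. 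Consequently the Weil--Ch\^atelet twist attached to $\psi_i$ in the proof of Lemma \ref{lem:H_1H_2} is trivial, so $(J^i_\eta,\mu_i)=(J^1_\eta,\mu_1)$, that is $i\xi=\xi$. As in the proof of Claim \ref{claim:LHS} one has $J^0(S)\cong F_0\times\PP^1$, hence $\Br(J^0(S))=0$; since both multiple fibers of $\pi$ have multiplicity $m$ we get $\lambda'_\pi=\lambda_\pi=m$, so \eqref{eqn:xi_lambda} gives $\ord\xi=m$ and therefore $i\equiv1\pmod m$. Together with the $-\rid$ case, only the classes of $\pm1$ map to $[1]$, so $\overline{\alpha}$ is injective; as both groups contain $\{\pm1\}$ and $m\ge5$, this yields $\lvert H_\pi\rvert\le\lvert H^{\cL}_{\hat{E}}\rvert$, which is \eqref{eqn:Hpi_HE}.

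I expect the main obstacle to be the fiberwise step in the injectivity argument: one must show rigorously that an isomorphism $J^i(S)\cong S$ covering $\rid_E$ produces only a translation of $J^0_\eta$, hence a trivial class in $\Aut_0(J^0_\eta)/{\sim}$, which requires carefully tracking the factorization $q_1=q_E\circ\Fr^e$ and the interplay between the ruling base $\hat{E}$ and the elliptic fiber $F_0$. A secondary technical point is the functoriality used in the homomorphism step, namely that applying $J^i(-)$ to an isomorphism of elliptic surfaces over $\PP^1$ preserves the induced automorphism of the Albanese $E$.
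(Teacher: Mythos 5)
Your construction of $\alpha$ and the verification that it is well defined modulo $\{\pm 1\}$ are essentially the paper's own argument: the paper also fixes an isomorphism $\varphi\colon J^i(S)\to S$, normalizes the induced map on $E$, and uses that an automorphism of $S=\PP(\cO_E\oplus\cL)$ induces $\phi$ with $\phi^*\cL\cong\cL^{\pm 1}$, hence $\pm\id$ on $\hat{E}$ by Lemma \ref{lem:action}. Your explicit multiplicativity check via functoriality of $J^i(-)$ and \eqref{eqn:group_structure} fills in a step the paper leaves implicit, and is fine as far as it goes (the functoriality itself would need proof, as you note). The genuine divergence is in the injectivity step, and there your argument has a concrete gap.

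Your chain ``$\psi_i$ covers $\id_E$ $\Rightarrow$ translation on the general fiber $\Rightarrow$ $(J^i_\eta,\mu_i)=(J^1_\eta,\mu_1)$ $\Rightarrow$ $i\equiv 1\ (\mathrm{mod}\ m)$'' proves too much and is false as stated: test it on $i=-1$. The natural isomorphism $J^{-1}(S)\cong S$ covers $-\id_E$, so after composing with a lift of $-\id_E$ --- exactly the normalization you prescribe --- one gets an isomorphism $J^{-1}(S)\to S$ covering $\id_E$; your argument would then yield $-\xi=\xi$, i.e.\ $2\xi=0$, contradicting $\ord\xi=\lambda'_\pi=m\ge 5$ from \eqref{eqn:xi_lambda}. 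The hidden error is in the word ``translation'': being a translation is only meaningful relative to an identification of the fiber of $\pi_i$ (which is a $\Pic^i$ of a fiber of $\pi$) with $F_0$ that is \emph{equivariant} for the tensoring action $\mu_i$ defining the torsor structure, and the geometric identifications coming from \eqref{eqn:hi} and \eqref{eqn:Fre} need not be equivariant --- an inversion can hide in them. What covering $\id_E$ actually forces (via your $q_1\circ\tau=q_1$ computation, with the sign tracked) is that the twist $\phi\in\Aut_0(J^0_\eta)$ of Lemma \ref{lem:H_1H_2} lies in $\{\pm\id\}$, hence only $i\xi=\pm\xi$ and $i\equiv\pm 1\ (\mathrm{mod}\ m)$. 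Fortunately this weaker conclusion is exactly what $\Ker\alpha\subset\{\pm 1\}$ requires, so your route is salvageable, and once repaired it is genuinely different from the paper's: the paper instead lifts $\varphi$ to $\phi\colon J^i(T)\to T$ over the diagram \eqref{eqn:fiber_product} and compares the $\Z/n\Z$-actions, obtaining $\rho_1(g)\circ\phi=\phi\circ\rho_i(g)^{\pm 1}$ and hence $i\equiv\pm 1$ only modulo $n$ from translations on $F$, whereas your Weil--Ch\^atelet argument works directly modulo $m$. Two further points must be supplied for your version to close: the identification $J^0(S)\cong F_0\times\PP^1$ (hence $\Br(J^0(S))=0$) has to be justified in the case $m=np^e$ as in Claim \ref{claim:LHS}; and since an isomorphism $J^i(S)\cong S$ only covers some automorphism of $\PP^1$, you must first correct it to be $k(\PP^1)$-linear on generic fibers, using that $\mathbb{G}_m\subset\Aut(S/E)$ maps onto the needed automorphisms of $\PP^1$ fixing $\{0,\infty\}$ while acting trivially on $E$, so that the normalization preserves ``covers $\id_E$''.
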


\begin{proof}
Take $i \in H_\pi (:=\{ i\in (\Z/m\Z)^* \mid J^i(S)\cong S\}).$ 
We have $\alpha(i)\in (\Z/m\Z)^*$ so that there exists an isomorphism
$$
\psi\colon \PP(\mc{O}_E\oplus \mc{L}^{\alpha(i)})\stackrel{\cong}\to S_i\stackrel{\cong}\to J^i(S)
$$ 
by Claims \ref{cla:case2} and \ref{cla:JS}. We use $\psi$ and the $\PP^1$-bundle structure
on $\PP(\mc{O}_E\oplus \mc{L}^{\alpha(i)})$ to fix a $\PP^1$-bundle structure on $J^i(S)$:
$$
f_{J^i(S)}\colon J^i(S)\to E
$$
Then Lemma \ref{lem:automor} (iii) implies that there exist  
an isomorphism $\varphi$ and an automorphism $\varphi_E\in \Aut_0(E)$ fitting in the commutative diagram
\begin{equation}\label{eqn:comm}
\xymatrix{
 \PP(\mc{O}_E\oplus \mc{L}^{\alpha(i)})\ar[r]^{\quad\psi}\ar[d] & J^i(S) \ar[d]_{f_{J^i(S)}}  \ar[r]^{\varphi}& S\ar[d]^{f}\\ 
E \ar@{=}[r]&E \ar[r]_{\varphi_E} & E}   \\
\end{equation}
and
$\varphi_E^*\mc{L}\cong  \mc{L}^{\alpha(i)}$ is satisfied.

Take another isomorphism $\varphi'\colon  J^i(S)\to S.$
Then since $\varphi'\circ \varphi^{-1}$ is an automorphism of $\PP(\mc{O}_E \oplus\mc{L})$, we have 
$(\varphi'_E\circ \varphi_E^{-1})^*\mc{L}\cong \mc{L}^{\pm 1}$
by Lemma \ref{lem:automor} (i) and (ii). Thus we obtain the group homomorphism
$$
\alpha \colon H_\pi\to H^\mc{L}_{\hat{E}}/\{\pm 1\}(:= \{ i \in (\Z/m\Z)^* \, | \, \exists \phi \in \Aut_0(E) \, \text{s.t.} \, \phi^*\cL \cong \cL^{i}\}/\{\pm 1\}.)
$$
Thus it suffices to prove $\Ker \alpha=\{\pm 1\}$. 
Suppose $i\in \Ker \alpha$. 
Since $\varphi_E^*\mc{L}\cong  \mc{L}^{\pm 1}$ holds in this case, Lemma \ref{lem:action}
implies that $\varphi_E$ fitting in the diagram \eqref{eqn:comm} is either $\id_E$ or $-\id_E$.
Replace $\varphi$ with $f^*(-\id_E)\circ \varphi$ (see the notation in  Lemma \ref{lem:automor} (ii) and the proof of ibid.~(iii)) if necessary, then we may assume that $\varphi_E=\id_E$.
We have the following commutative diagram \footnote{Here, we identify $S_i$ and $J^i(S)$ by Claim \ref{cla:JS}.}:
\begin{equation}\label{eqn:fiber_product}
\xymatrix@!0{
& F\ar@{->}'[d][dd]\ar@{=}[dl]
& & J^i(T)\ar[ll]_{f_i} \ar[dd]\ar@{.>}[dl]^<(.6){\exists\phi}
\\
F \ar[dd]_{q_E}
& & T \ar[ll]_{\quad f_1}\ar[dd]
\\
& E \ar@{=}[dl]
& & S_i\ar[dl]^\varphi\ar@{->}'[l][ll]
\\
E 
& & S \ar[ll]
}
\end{equation}
Because the front and the back squares in \eqref{eqn:fiber_product} are the fiber product diagrams,
there exists an isomorphism $\phi\colon J^i(T)\to T$ which makes the right square  the fiber product. 

Since $\phi$ descends to $\varphi\colon  S_i=J^i(T)/_{\rho_i}G\to S=T/_{\rho_1}G$ for $G=\Z/n\Z=\Span{g}$,  we have 
$$
\rho_1(g)\circ \phi=\phi\circ \rho_i(g)^l
$$
for some $l$. Recall that both of $\rho_1(g)$ and $\rho_i(g)$ induce the same automorphism $g_{\PP^1}$ on the base curve $\PP^1$ of the elliptic fibrations on $T$ and $J^i(T)$ (see Claim \ref{cla:action}  and \eqref{eqn:action_st}), then we see  
$l=\pm 1$.
Next recall $\rho_1(g)$ (resp.~$\rho_i(g)$) induces the  automorphism $T_a$ (resp.~$T_{i\cdot a}$) on $F$, the base curve of the $\PP^1$-bundle $f_1$ (resp.~$f_i$). Then we know that 
$$
T_a=(T_{i\cdot a})^l=T_{li\cdot a},
$$
and hence, $1=il$ in $(\Z/n\Z)^*$. Therefore we have $i=\pm 1$, and hence
$\Ker \alpha\subset \{\pm 1\}$. The other direction is obvious.
\end{proof}

By Claim \ref{claim:alpha}, 
we conclude that  $|H_\pi |\le  |H^\mc{L}_E|$ as is required in \eqref{eqn:Hpi_HE}.

Therefore, we complete the proof of the first statement in Theorem \ref{thm:main} for arbitrary $m \ge 5$. 
The second follows from Lemma \ref{lem:injection} (ii).

\bibliographystyle{plain}

\noindent
Hokuto Uehara

Department of Mathematical Sciences,
Graduate School of Science,
Tokyo Metropolitan University,
1-1 Minamiohsawa,
Hachioji,
Tokyo,
192-0397,
Japan 

{\em e-mail address}\ : \  hokuto@tmu.ac.jp
\ \vspace{0mm} \\

\noindent
Tomonobu Watanabe

Hosoda Gakuen Junior and Senior High School, 2-7-1, Honcho, Shiki, Saitama, 353-0004, Japan

{\em e-mail address}\ : \  ttkk.8128@gmail.com
\ \vspace{0mm} \\

\end{document}